\keywords{Combinatorial graph theory, combinatorial probability, cographs, random \linebreak graphs, graphons, self-similarity} %ok
\title{Linear-sized independent sets in random cographs and increasing subsequences \\ in separable permutations}
\author[1]{Fr\'ed\'erique Bassino}
\author[2]{Mathilde Bouvel\thanks{Supported by the Swiss National Science Foundation, under grants number 200021\_172536 and PCEFP2\_186872.}}
\author[3]{Michael Drmota}
\author[4]{Valentin F\'eray}
\author[5]{Lucas Gerin}
\author[6]{Micka\"{e}l Maazoun\thanks{Supported by EPSRC Fellowship EP/N004833/1.}}
\author[7]{Adeline Pierrot}
\affil[1]{Universit\'e Sorbonne Paris Nord, LIPN, CNRS UMR 7030, F-93430 Villetaneuse, France %\newline 

\email{bassino@lipn.fr}
}
\affil[2]{Institut f\"ur Mathematik, Universit\"at Z\"urich, Winterthurerstr. 190, CH-8057 Z\"urich, Switzerland and  Universit\'e de Lorraine, CNRS, Inria, LORIA, F-54000 Nancy, France %\newline 

\email{mathilde.bouvel@loria.fr}
}%
\affil[3]{Institute of Discrete Mathematics and Geometry, TU Wien, Wiedner Hauptstr. 8--10, A-1040 Wien, Austria %\newline 

\email{michael.drmota@tuwien.ac.at}
}
\affil[4]{Universit\'e de Lorraine, CNRS, IECL, F-54000 Nancy, France %\newline     

\email{valentin.feray@univ-lorraine.fr}
}
\affil[5]{CMAP, \'Ecole polytechnique, CNRS, I.P. Paris, 91128 Palaiseau, France %\newline     

\email{gerin@cmap.polytechnique.fr}
}
\affil[6]{Department of Statistics, University of Oxford, 24-29 St Giles', Oxford OX1 3LB, UK %\newline     

\email{mickael.maazoun@gmail.com}
}
\affil[7]{Universit\'e Paris-Saclay, CNRS, Laboratoire Interdisciplinaire des Sciences du Num\'erique, 91400, Orsay, France %\newline     %ok

\email{adeline.pierrot@lri.fr}
}
\def\eps{\varepsilon}
\def\si{\sigma}
\def\R{\mathbb{R}}
\def\Leb{\mathsf{Leb}}
\def\cograph{{\sf Cograph}} 
\def\Cograph{{\sf Cograph}}
\def\One{{1}}
\def\Zero{{0}}
\def\stochleq{\le_d}
\def\ind{\widetilde \alpha}
\def\indd{\widetilde {\bm \alpha_2}}
\def\Ind{\alpha}
\DeclareMathOperator{\LIS}{LIS}
\DeclareMathOperator{\LDS}{\omega}
\DeclareMathOperator{\inv}{inv}
\def\proba{\mathbb{P}}
\def\pr{\mathbb{P}}
\def\ex{\mathbb{E}}
\newcommand{\Sample}{\mathbf{Sample}}
\DeclareMathOperator{\Dec}{Dec}
\DeclareMathOperator{\Dirichlet}{Dirichlet}
\DeclareMathOperator{\Law}{Law}
\newcommand{\Exc}{ \scalebox{1.1}{$\mathfrak{e}$}}
\newcommand{\Inversion}{\widetilde{\mathrm{inv}}}
\newcommand{\inversion}{\inv}
\begin{document}
%\writedatatofile

\maketitle

% ABSTRACT
% CT papers must include an abstract. The abstract should consist of a
% succinct statement of background followed by a listing of the
% principal new results that are to be found in the paper. The abstract
% should be informative, clear, and as complete as possible. Phrases
% like "we investigate..." or "we study..." should be kept to a minimum
% in favor of "we prove that..."  or "we show that...".  Do not
% include equation numbers, unexpanded citations (such as "[23]"), or
% any other references to things in the paper that are not defined in
% the abstract. The abstract may be distributed without the rest of the
% paper so it must be entirely self-contained.  Try to include all words
% and phrases that someone might search for when looking for your paper.
% You can use some basic LaTeX commands in the abstract, but not any
% user defined macros. 

\begin{abstract}
This paper is interested in independent sets (or equivalently, cliques) in uniform random cographs. 
We also study their permutation analogs, namely, increasing subsequences in uniform random separable permutations.

First, we prove that, with high probability as $n$ gets large, the largest independent set 
in a uniform random cograph with $n$ vertices has size $o(n)$. 
This answers a question of Kang, McDiarmid, Reed and Scott. 
Using the connection between graphs and permutations via inversion graphs,
we also give a similar result for the longest increasing subsequence in separable permutations.
These results are proved using the self-similarity of the Brownian limits 
of random cographs and random separable permutations, 
and actually apply more generally to all families of graphs and permutations
with the same limit. 

Second, and unexpectedly given the above results, 
we show that for $\beta >0$ sufficiently small, the expected number of independent sets of size $\beta n$  
in a uniform random cograph with $n$ vertices 
grows exponentially fast with $n$. 
We also prove a permutation analog of this result.
This time the proofs rely on singularity analysis of the associated bivariate generating functions.
\end{abstract}

\section{Introduction}

This paper contains both results for graph and permutation models (connected through the mapping
associating with a permutation its inversion graph); 
for simplicity we present these results and the related backgrounds separately.

\subsection{Independent sets in random cographs}

Cographs were introduced in the seventies by several authors independently (under various names),
see \emph{e.g.}~\cite{Seinsche}.
They enjoy several equivalent characterizations. Among others, \linebreak cographs are  %ok
\begin{itemize}
 \item the graphs avoiding $P_4$ (the path with four vertices) as an induced subgraph;
 \item the graphs whose modular decomposition does not involve any prime graph; 
 \item the inversion graphs of separable permutations;
 \item the graphs which can be constructed from graphs with one vertex by taking disjoint unions and joins.
\end{itemize}
The latter characterization is the most useful for our purpose, let us introduce the terminology.

All graphs considered in this paper are \emph{simple} (\emph{i.e.} without multiple edges, nor loops) and not directed.
Two labeled graphs $(V,E)$ and $(V',E')$ are isomorphic if there exists a bijection
from $V$ to $V'$ which maps $E$ to $E'$.
Equivalence classes of labeled graphs for the above relation are {\em unlabeled graphs}.
Throughout this paper, the \emph{size} of a graph is its number of vertices, and we denote by $V_G$ the set of vertices of any graph $G$.

Let $G=(V,E)$ and $G'=(V',E')$ be labeled graphs with disjoint vertex sets.
We define their \emph{disjoint union} as the graph $(V \uplus V', E \uplus E')$ (the symbol $\uplus$ denoting as usual the disjoint union of two sets). 
We also define their \emph{join} as the graph
 $(V \uplus V', E \uplus E' \uplus (V \times V'))$: 
 namely, we take copies of $G$ and $G'$, 
 and add all edges from a vertex of $G$ to a vertex of $G'$.
Both definitions readily extend to more than two graphs, adding edges between any two vertices originating from different graphs in the case of the join operation.

\begin{definition}
A \emph{labeled cograph} is a labeled graph that can be generated from single-vertex graphs 
 applying join and disjoint union operations. An \emph{unlabeled cograph} is the underlying unlabeled graph of a labeled cograph.
\end{definition}

Recall that, for a given graph $G$, an \emph{independent set} is a subset of vertices in $G$ no two of which are adjacent, while a \emph{clique} is a subset of vertices in $G$  such that every two vertices are adjacent.

The main motivation for studying independent sets in random cographs
comes from the series of papers \cite{ProbabilisticEH,LinearEH}
 on a probabilistic version of the Erd\H{o}s--Hajnal conjecture.

For a graph $G$, a subset of its vertices is called {\em homogeneous}
if it is either a clique or an independent set.
It is well-known that every graph of size $n$ has a homogeneous set of size at least logarithmic in $n$,
and that this is optimal up to a constant
(much work is devoted to get the precise asymptotics; this is
equivalent to the computation of diagonal Ramsey numbers, see \cite{RamseyLowerBound, SahRamsey} for the better bounds up to date).
The conjecture of Erd\H{o}s and Hajnal states that, assuming that the graphs avoid any given subgraph 
(as an induced subgraph),
homogeneous sets of size polynomial in $n$ necessarily exist.
More precisely, for any $H$, there exists a constant~${\eps=\eps(H)>0}$
such that every $H$-free graph has a homogeneous set of size $n^\eps$. %ok

Despite much effort, the Erd\H{o}s--Hajnal conjecture is still widely open;
see for example the survey \cite{SurveyErdosHajnal}.
A natural relaxation of the conjecture consists in replacing "every $H$-free graph" in the statement above %ok
by "almost all $H$-free graphs". This weaker version has been established in \cite{ProbabilisticEH}. %ok
For a large family of constraints $H$, this result was further improved
by Kang, McDiarmid, Reed and Scott in \cite{LinearEH}:
for those $H$, a uniform random $H$-free graph has with high probability a homogeneous set of {\em size linear in $n$}. %ok
 When this holds, 
the graph $H$ is said to have the {\em asymptotic linear Erd\H{o}s--Hajnal property}
(see \cite{LinearEH} for a formal definition).
Kang, McDiarmid, Reed and Scott ask whether $H=P_4$ has 
the asymptotic linear Erd\H{o}s--Hajnal property,
{\emph i.e.} whether a uniform random cograph with $n$ vertices
has a homogeneous set of size  linear in $n$ \cite[Section 5]{LinearEH}, 
and this question has remained open until now\footnote{
% Ancienne version de la footnote.
%A proof that $P_4$ does not have the linear Erd\H{o}s--Hajnal property was announced in 2012 in the PhD thesis of Andreas Würfl~\cite[Chapter 9]{Wuerfl}, as the result of a joint work in progress with C. Hoppen and M. Noy. However, it turns out that their argument did not work, and the question remained unanswered.}.
% \comment
% LG 30/09 2022. Le referee n'est pas à l'aise avec "neither to us nor to the authors of the proof sketch". Je pense qu'il/elle n'a % pas vu les remerciements où on dit qu'on a parlé avec M.Noy. Moi ça me va si on garde la footnote en retirant juste "neither % to us nor to the authors of the proof sketch".}
A sketch of proof that $P_4$ does not have the linear Erd\H{o}s--Hajnal property was given in 2012 in the PhD thesis of Andreas W{\"u}rfl~\cite[Chapter 9]{Wuerfl}, as the result of a joint work in progress with C. Hoppen and M. Noy. 
However, this proof sketch contains several approximations or inaccuracies, which would need to be fixed for their argument to be a complete proof. It is not clear whether such fixes are possible.}.
%, neither to us nor to the authors of the proof sketch.
%It is therefore considered that the question remained unanswered.

Our first result answers this question in the negative.
In the following, for a graph $G$, we denote $\Ind(G)$ 
the maximum size of an independent set in $G$,
also called the {\em independence number} of $G$.
\begin{theorem}
  \label{thm:sublinear}
  Let $\bm{G_n}$ be a uniform random cograph 
  (either labeled or unlabeled) of size $n$.  
  The maximum size of an independent set in $\bm{G_n}$ is sublinear in $n$, 
  namely $\frac{\Ind(\bm{G_n})}{n}$ converges to $0$ in probability.
\end{theorem}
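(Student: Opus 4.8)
The plan is to deduce the statement from the convergence of uniform random cographs -- labeled or unlabeled -- to their common scaling limit, the \emph{Brownian cographon}, together with the \emph{self-similarity} of that limit; the argument will only use this limit, so it also yields the conclusion for any sequence of (random) cographs with the same limit, as announced. Since $\Ind(\bm{G_n})\ge 0$, it suffices by Markov's inequality to prove $\ex[\Ind(\bm{G_n})]=o(n)$. I would organize the proof around a ``limiting independence density'' $\Lambda\in[0,1]$ (heuristically, a subsequential limit of $\Ind(\bm{G_n})/n$), first show that any such $\Lambda$ must vanish almost surely using the self-similar decomposition, and then control $\Ind(\bm{G_n})/n$ from above by a finite-depth truncation of that decomposition.

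The core is a statement purely about the recursion. The Brownian cographon decomposes self-similarly into a fixed number $r\ge 2$ of independent rescaled copies of itself (for the Brownian cographon, $r=3$), with a.s.-positive masses $P_1,\dots,P_r$ summing to $1$, glued either by a disjoint union or by a join according to an independent fair coin. Combinatorially, an independent set of a disjoint union is a disjoint union of independent sets of the parts -- so $\Ind$ is additive over disjoint unions -- while an independent set of a join lies entirely in one part -- so $\Ind$ of a join is the maximum over the parts. Hence the asymptotic density of a largest independent set satisfies
\[
 \Lambda \;\overset{d}{=}\;
 \begin{cases}
 \ \sum_{i=1}^{r} P_i\,\Lambda^{(i)} & \text{with probability } \tfrac12,\\[1mm]
 \ \max_{1\le i\le r} P_i\,\Lambda^{(i)} & \text{with probability } \tfrac12,
 \end{cases}
\]
where $\Lambda^{(1)},\dots,\Lambda^{(r)}$ are i.i.d.\ copies of $\Lambda$, independent of $(P_1,\dots,P_r)$. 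I claim any solution $\Lambda\in[0,1]$ of this equation is a.s.\ $0$. Taking expectations: since the $\Lambda^{(i)}$ are i.i.d.\ and $\sum_i\ex[P_i]=1$, we have $\ex\big[\sum_i P_i\Lambda^{(i)}\big]=\ex[\Lambda]$, so $\ex[\Lambda]=\tfrac12\ex\big[\max_i P_i\Lambda^{(i)}\big]+\tfrac12\ex[\Lambda]$, whence $\ex\big[\max_i P_i\Lambda^{(i)}\big]=\ex\big[\sum_i P_i\Lambda^{(i)}\big]$. Since $\max_i a_i\le\sum_i a_i$ for nonnegative reals, with equality only if at most one $a_i$ is positive, and since the $P_i$ are positive, at most one of $\Lambda^{(1)},\dots,\Lambda^{(r)}$ can be nonzero almost surely; being i.i.d., this is possible only if $\Lambda=0$ a.s.

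To make this rigorous and transfer it to $\bm{G_n}$ I would use truncation. Iterating the self-similar decomposition $d$ times represents $\bm{G_n}$ as built from finitely many sub-cographs $\bm{G}_{n_1},\dots,\bm{G}_{n_k}$ by a fixed monotone ``$\max/\sum$'' formula $F_d$ read off from the top $d$ levels; using only $\Ind(\bm{G}_{n_i})\le n_i$ and the monotonicity of $F_d$,
\[
 \tfrac1n\,\Ind(\bm{G_n})\ \le\ \tfrac1n\,F_d(n_1,\dots,n_k)\ =\ F_d\!\left(\tfrac{n_1}{n},\dots,\tfrac{n_k}{n}\right).
\]
As $n\to\infty$ with $d$ fixed, the normalized block sizes $(n_i/n)_i$ converge in law to the masses of the depth-$d$ blocks of the continuous decomposition, so the right-hand side converges in distribution to the corresponding value $\Lambda_d\in[0,1]$ of $F_d$; being bounded, $\limsup_n\ex[\Ind(\bm{G_n})]/n\le\ex[\Lambda_d]$. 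Each further layer of the decomposition replaces a block mass $\mu$ by a sum or a maximum of the children masses, all $\le\mu$, so $\Lambda_d$ is nonincreasing; its limit $\Lambda_\infty$ solves the self-similar identity above, hence $\Lambda_\infty=0$ a.s.\ by the previous paragraph. Thus $\ex[\Lambda_d]\downarrow0$, which gives $\ex[\Ind(\bm{G_n})]=o(n)$ and the theorem.

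I expect the main obstacle to be this transfer step: spelling out the finite-depth self-similar decomposition of a uniform random cograph and the convergence of its normalized block sizes, and -- the subtle point -- keeping the argument from ever trying to recover $\Ind$ from the cut-metric (graphon) limit. No such shortcut is available: cographs can carry large ``almost independent'' sets far from any genuine one -- for instance the disjoint union of $\sqrt n$ cliques of size $\sqrt n$ has only $\Theta(n^{3/2})=o(n^2)$ edges yet independence number $\sqrt n$ -- so $\Ind/n$ is not semicontinuous for the cut metric. The truncation above is designed to avoid this entirely: it uses the graph only through the exact combinatorics of the top of its cotree, the sole analytic input being the harmless bound $\Ind(G)\le|V_G|$.
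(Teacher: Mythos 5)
Your overall strategy is the paper's: decompose the Brownian limit self-similarly, show the limiting independence density solves a recursive distributional relation whose only solution is $0$, and transfer back to $\bm G_n$. Your fixed-point step is actually a nice simplification of the paper's argument (which proves that the map $\mu\mapsto\Phi_p(\mu)$ is a Wasserstein weak contraction and invokes Banach): taking expectations, using $\sum_i\ex[P_i\Lambda^{(i)}]=\ex[\Lambda]$ and $\max\le\sum$ with equality iff at most one term is positive, and then i.i.d.-ness, is elementary and correct. Two caveats there. First, one only gets an \emph{inequation} $\Lambda\le_d\Phi_p(\Lambda)$, not the distributional equality you assert: an independent set of the whole graphon restricts to independent sets of the three pieces, but arbitrary independent sets of the pieces need not reassemble, because adjacency between the ``outer'' piece and the two ``inner'' pieces is governed by decorations above the splitting point. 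Second, the coin only toggles sum versus max between the two inner pieces; the outer piece is always added (see \eqref{eq:DefY}), so your recursion with a single $\max$ over all $r$ pieces is not a valid upper bound as written. Both issues are repairable: your expectation argument goes through verbatim for the correct inequation, since it forces $\ex[\max(\bm\Delta_1\Lambda^{(1)},\bm\Delta_2\Lambda^{(2)})]=\ex[\bm\Delta_1\Lambda^{(1)}+\bm\Delta_2\Lambda^{(2)}]$ and hence $\pr(\Lambda>0)^2=0$.

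The genuine gap is the transfer step, and it stems from a misreading of the semicontinuity result you reject. Hladk\`y--Rocha's \cref{prop:SemiContinuity} states exactly $\limsup_n\ind(W_{G_n})\le\ind(W)$, i.e.\ the limit graphon's independence number is an asymptotic \emph{upper bound} for $\Ind(G_n)/n$ --- which is precisely the direction needed once one knows $\ind(\bm W^{1/2})=0$. Your clique example (union of $\sqrt n$ cliques of size $\sqrt n$, converging to the zero graphon with $\ind=1$ while $\Ind/n\to0$) only shows that the reverse inequality, hence continuity, fails; that is harmless. Having discarded this available shortcut, you replace it with a finite-depth truncation of the cotree, but the top $d$ levels of the cotree of a uniform random cograph are \emph{not} the discrete counterpart of the Aldous two-point decomposition: the root of a random cotree has a random, unbounded number of children, and the law of its normalized mass split is not $\Dirichlet(1/2,1/2,1/2)$ (that law arises from splitting the excursion at the minimum between two uniform sampled points, i.e.\ at the common ancestor of two random leaves). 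So the claimed convergence of $F_d(n_1/n,\dots,n_k/n)$ to ``the depth-$d$ value of the continuum decomposition'' is unproved and, as stated, matches the wrong limit object; establishing the correct discrete two-point decomposition and its convergence would be at least as much work as the graphon route. The paper avoids all of this by doing the recursion entirely in the continuum (\cref{prop:InequationSatisfied}) and transferring via Skorokhod representation plus \cref{prop:SemiContinuity}.
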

For a discussion on the difference between the labeled and unlabeled settings,
we refer to \cref{remark:labeling} below.
We recall the standard notation for comparison of random variables: ${\bm X_n=o_P(\bm Y_n)}$ 
if $\bm X_n/\bm Y_n$ tends to $0$ in probability.
The above theorem says that $\Ind(\bm{G_n})$ is $o_P(n)$. 
By taking complements (see the identity \eqref{eq:OmegaEgalAlpha}) 
it also holds that the size of the largest clique in~$\bm{G_n}$ is $o_P(n)$.
Consequently, the size of the largest homogeneous set is also $o_P(n)$,
answering negatively the question of Kang, McDiarmid, Reed and Scott \cite[Section 5]{LinearEH}:
$P_4$ does not have the asymptotic linear Erd\H{o}s--Hajnal property.

%\bigskip

A different approach to study independent sets of size  linear in $n$ in random cographs
is the following. 
Let $X_k(G)$ be the number of independent sets of size $k$ in a graph $G$.
From \cref{thm:sublinear},
if $\bm G_n$ is a uniform random (labeled) cograph, 
 then the random variable % \linebreak %ok
  ${\bm X_{n,k} \! := \! X_{k}(\bm G_n)}$ 
tends to $0$ in probability if $k \sim \beta n$ for some $\beta > 0$ as $n$ tends to infinity.
We show that nonetheless, its expectation grows exponentially fast
for $\beta$ small enough. In particular, this indicates that \cref{thm:sublinear} cannot be proved by a naive use
of the first moment method.
More precisely, we have the following result.

\begin{theorem}\label{th:AsymptX_nalpha}
For each $n\ge 1$, let $\bm G_n$ be a uniform random labeled cograph of size $n$,
and let $\bm X_{n,k}$ be the number
of independent sets of size $k$ in $\bm G_n$.
Then there exist some computable functions $B_\beta >0$, $C_\beta >0$ ($0 < \beta < 1$) 
with the following property.
For every fixed closed interval $[a,b] \subseteq (0,1)$, we have
\begin{equation}\label{eq:AsymptotiqueEX_n,k}
\mathbb{E}[\bm X_{n,k}] \sim B_{k/n} \, n^{-1/2} (C_{k/n})^n
\end{equation}
uniformly for $an \le k \le bn$.
Furthermore,
\begin{enumerate}
\item When $\beta \to 0$, we have
$
C_\beta=1+\beta|\log(\beta)| +\mathrm{o}(\beta\log(\beta)).
$
\item Consequently, there exists $\beta_0>0$ such that $C_\beta >1$ for every $\beta \in (0,\beta_0)$ ;
numerically, we can estimate \[\beta_0 \approx 0.522677\dots\]
\end{enumerate}
\end{theorem}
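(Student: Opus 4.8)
The plan is to set up a bivariate generating function that tracks cographs by size and independent sets by size, apply singularity analysis, and then perform an asymptotic expansion of the resulting location of the singularity as $\beta\to 0$.

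First I would recall the recursive structure of labeled cographs: a labeled cograph on $n\ge 2$ vertices is either a disjoint union of at least two connected components (each a labeled cograph), or the join of at least two ``co-connected'' parts (each a labeled cograph). This bicolored tree structure gives a system of functional equations for the exponential generating function $C(z)$ of labeled cographs, and — crucially — lets us mark independent sets. The key combinatorial observation is how an independent set sits inside the disjoint-union/join decomposition: in a join of several parts, an independent set must lie entirely within one part (since all cross edges are present), whereas in a disjoint union, an independent set is an arbitrary ``product'' of independent sets chosen in each part. So if $F(z,u) = \sum_{n,k} X_{n,k}^{\mathrm{tot}} \frac{z^n}{n!} u^k$ denotes the EGF counting pairs (labeled cograph of size $n$, independent set of size $k$) — with $\ex[\bm X_{n,k}] = \frac{n!\,[z^n u^k] F}{n!\,[z^n] C} = \frac{[z^n u^k]F}{[z^n]C}$ — then $F$ satisfies a functional equation of the same shape as $C$ but with the single-vertex term $z$ replaced by $z(1+u)$ in the disjoint-union branch and by $z$ alone tracked appropriately in the join branch. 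I would carefully derive this system (this is where one must be precise about the ``at least two parts'' constraint, typically handled by an inclusion of the form $\mathrm{SET}_{\ge 2}$, i.e. $e^{x}-1-x$, in the EGF world); the computation notebook embedded in the paper presumably records the exact equations.

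Next, for each fixed $u>1$, I would analyze the dominant singularity $\rho(u)$ of $z\mapsto F(z,u)$. Since $C(z)$ itself has a square-root-type singularity at some $\rho_0 = \rho(1)$ (the classical behavior for such recursively-defined labeled structures, giving $[z^n]C \sim \kappa\, n^{-3/2}\rho_0^{-n}$), and $F(z,u)$ solves a smooth perturbation of the same equation, I expect $F(\cdot,u)$ to have a square-root singularity at some $\rho(u) < \rho_0$ for $u>1$, with $\rho(u)$ depending smoothly on $u$. Transfer theorems then give $[z^n u^k]\ldots$ — but one must be careful: we want $[z^n u^k]F$, not $[z^n]F(z,u)$. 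The standard device is to write $[z^n u^k]F = \frac{1}{2\pi i}\oint F(z,u)\, u^{-k-1}\, du$ evaluated on a circle of radius $u$, combine with $[z^n]F(z,u)\sim B(u) n^{-1/2}\rho(u)^{-n}$ from singularity analysis in $z$, and perform a saddle-point analysis in $u$ with $k=\beta n$. The saddle point $u=u_\beta$ is determined by $\beta = -u\,\frac{d}{du}\log\rho(u)$, and then $C_\beta = \big(u_\beta^{\beta}\,\rho(u_\beta)\,\rho_0^{-1}\big)^{-1}$ roughly (the factor $\rho_0^{-1}$ coming from dividing by $[z^n]C$), with $B_\beta$ collecting the Gaussian fluctuation constant and the prefactor $B(u_\beta)$; the $n^{-1/2}$ in \eqref{eq:AsymptotiqueEX_n,k} is exactly what a one-dimensional saddle point contributes. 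Uniformity for $\beta\in[a,b]$ follows because all these quantities are continuous (indeed analytic) in $\beta$ on the compact interval, provided the saddle point stays in the analyticity domain — a point to verify.

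For part~(i), the asymptotics as $\beta\to 0$: here $k = \beta n$ is small, so the saddle point $u_\beta\to\infty$ (few, small independent sets are ``cheap'' so we must push $u$ large to extract them). I would extract the leading behavior of $\rho(u)$ as $u\to\infty$ from the functional equation — heuristically, an independent set of size $k$ in a cograph of size $n$ is, to first approximation, a choice of $k$ singleton components glued by disjoint unions, contributing a factor roughly $\binom{n}{k}$, so $\ex[\bm X_{n,k}] \approx \binom{n}{k}$ to leading exponential order, and $\binom{n}{\beta n}^{1/n} = \beta^{-\beta}(1-\beta)^{-(1-\beta)} = 1 + \beta|\log\beta| + o(\beta\log\beta)$ — which is exactly the claimed expansion. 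Making this rigorous means tracking the next-order corrections to $\rho(u)$ for large $u$ and confirming they do not disturb the $\beta|\log\beta|$ term; this is the main obstacle, as it requires genuine control of the functional equation in a regime ($u\to\infty$) far from the ``combinatorial'' point $u=1$. Part~(ii) is then immediate: since $C_\beta = 1 + \beta|\log\beta| + o(\beta\log\beta) > 1$ for all sufficiently small $\beta>0$, continuity of $\beta\mapsto C_\beta$ yields a threshold $\beta_0 = \sup\{\beta : C_{\beta'}>1 \text{ for all } \beta'\in(0,\beta)\}$, and its numerical value $0.522677\ldots$ is obtained by solving $C_\beta = 1$ numerically using the explicit (computable) formula for $C_\beta$ derived above.

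I expect the main difficulty to be twofold: (a) correctly bookkeeping the disjoint-union/join recursion when marking independent sets — in particular the ``$\ge 2$ parts'' constraints and making sure an independent set is not over- or under-counted across the two branches — and (b) the large-$u$ asymptotic analysis of $\rho(u)$ needed for part~(i). The singularity analysis in $z$ and the saddle point in $u$ are, by contrast, fairly standard once the generating function is in hand.
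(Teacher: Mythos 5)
Your overall architecture matches the paper's: encode the join/disjoint-union decomposition in a bivariate exponential generating function (the paper does this via cotrees, splitting into $C_0$ and $C_1$ according to the root decoration, using exactly the two combinatorial observations you state), establish a square-root singularity $z=r(u)$ for the marked series, and combine singularity analysis in $z$ with a saddle point in $u$ located by $\beta=-u\,r'(u)/r(u)$, which yields $C_\beta=\rho/\bigl(r(u(\beta))\,u(\beta)^{\beta}\bigr)$ after dividing by $[z^n]C(z,0)$. Two slips in this part: a square-root singularity transfers to $n^{-3/2}$, not $n^{-1/2}$ (the final $n^{-1/2}$ comes from $n^{-3/2}\cdot n^{-1/2}/n^{-3/2}$); and the technical point the paper spends most effort on is that the implicit equation $c=G(z,c,u)$ has \emph{negative} coefficients, so the smooth implicit-function schema does not apply off the shelf and its conclusions (uniqueness of the characteristic point, $\rho(u)=r(u)$, $C_1(r(u),u)=s(u)$) must be re-derived by hand; you flag this only as ``a point to verify.''

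The genuine gap is in your plan for part (i). The saddle point moves the \emph{other} way: small $\beta$ corresponds to small $u$, not large $u$. Since $u$ weights the size of the marked independent set, targeting $k=\beta n$ with $\beta\to0$ forces $u\to0^+$; indeed the paper finds $u(\beta)=2\beta+O(\beta^{3/2})$, and $0<u(\beta)<1$ for all $\beta\in(0,1)$ (\cref{rmk:invertible}), so the range $u>1$ you propose to work in never even arises. Consequently the delicate regime is not $u\to\infty$ but the confluence $r(u)\to\rho^-$ as $u\to0^+$: the branch point of the marked series collides with the branch point of $L(z)$ itself (at $u=0$ the characteristic point sits exactly at $z=\rho$). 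The paper resolves this by parametrizing by $y=L(r(u))$ and expanding around $y=\log2$, which gives $\beta\sim(y-\log2)^2/(2\log2-1)$, $r(u(\beta))=\rho(1-\beta)+O(\beta^{3/2})$, and hence $C_\beta=1+\beta|\log\beta|+o(\beta\log\beta)$. Your heuristic $\ex[\bm X_{n,k}]\approx\binom{n}{k}$ does predict the correct leading term, but the rigorous route you propose (large-$u$ control of the singularity) analyzes a limit irrelevant to $\beta\to0$, so part (i) would not go through as written.
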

We have found no explicit formula for the growth constant $C_\beta$ but 
$C_\beta$ can be computed numerically with arbitrary precision: see \cref{eq:CBeta}.
To get an idea of how fast $\bm X_{n,k}$ can grow if $k\sim \beta n$,
we mention that the function $\beta \mapsto C_\beta$
seems to have a unique maximum on $(0,1)$ (see a plot in \cref{fig:C_beta}); 
denoting $\beta^\star$ its location, we have the following numerical estimates:
\[\beta^\star \approx 0.229285\dots; \quad
C_{\beta^\star} \approx 1.366306\dots\]

As additional motivation for \cref{th:AsymptX_nalpha}, let us mention the work of Drmota, Ramos, Requilé and Rué
\cite{DrmotaEtAl_IndependentSetsSubcritical}:
they prove (among other things; see their Corollary 2) 
the exponential growth of the expected number of maximal independent sets
in some subcritical graph classes such as trees, cacti, series-parallel graphs, \dots
(here ``maximal independent sets'' refers to independent sets
that are maximal for inclusion among all independent sets;
such sets are not necessarily of maximum size among all independent sets).
It could be interesting to adapt our 
arguments to consider maximal independent sets in cographs
instead of independent sets of fixed size.

%\bigskip

\begin{remark}
\label{remark:labeling}
There are two different ways to pick a uniform random cograph with $n$ vertices: 
taking it uniformly at random among labeled or among unlabeled cographs.
Even if the sizes of independent sets are independent from the labelings,
this gives two different probability distributions, since some unlabeled cographs
have more symmetries and hence fewer distinct labelings than others.

The reader may have noticed that in \cref{thm:sublinear}, we consider either labeled or unlabeled 
uniform random cographs, while \cref{th:AsymptX_nalpha} only considers the labeled setting.
The reason of this choice is given in \cref{ssec:intro_proofs} when discussing proof methods.
\end{remark}

\begin{remark}
A natural question is to determine the order of magnitude of $\Ind(\bm{G}_n)$.
A basic lower bound of order $\sqrt{n}$ is derived as follows.
Since cographs are perfect graphs, for any cograph $G$, we have (see \cite[Theorem 1.4]{SurveyErdosHajnal})
\begin{equation}\label{eq:InequalityIndClique}
\max(\Ind(G),\LDS(G)) \geq \sqrt{n}
\end{equation}
where $\LDS(G)$ is the size of the largest \emph{clique} of $G$.
By symmetry we have that $\Ind(\bm G_n)\stackrel{\text{(d)}}{=}\LDS(\bm G_n)$ 
if ${\bm G_n}$ is a uniform (labeled or unlabeled) cograph. Hence:
\begin{align*}
1&=\mathbb{P}\big(\max(\Ind(\bm G_n),\LDS(\bm G_n)) \geq \sqrt{n}\,\big)\\
&\leq \mathbb{P}\big(\Ind(\bm G_n) \geq \sqrt{n}\,\big)+\mathbb{P}\big(\LDS(\bm G_n) \geq \sqrt{n}\,\big)\\
&=2\mathbb{P}\big(\Ind(\bm G_n) \geq \sqrt{n}\,\big),
\end{align*}
which means that $\Ind(\bm G_n)$ is not $o_P(\sqrt{n})$.

We have not been able to improve this bound, but we believe it to be far from
optimal.
In fact, (limited) numerical simulations, as well as the material in~\cite[Chapter~9]{Wuerfl}
make us believe that $\Ind(\bm G_n)$ is of order $n/\log(n)$.
\end{remark}

\subsection{Increasing subsequences in random separable permutations}
\label{ssec:intro_separables}

The asymptotic behavior of the length of the longest increasing subsequence $\LIS(\bm{s_n})$ in a uniform random permutation $\bm{s_n}$ of size $n$ is an old and famous problem that led to surprising and deep connections with various areas of pure mathematics (representation theory, combinatorics, linear algebra and operator theory, random matrices,\dots).
In particular, it is well-known that~$\LIS(\bm{s_n})$ is typically close to $2\sqrt{n}$
and has Tracy--Widom fluctuations of order $n^{1/6}$.
We refer to \cite{Romik} for a nice and modern introduction to this topic.

Longest increasing subsequences in random permutations in permutation classes are a much newer topic: see \cite{mansour2020LIS_Classes} and references therein.
The methods of the present paper allow the proof of the sublinear behavior of the length of the longest increasing subsequence 
in a uniform random separable permutation. Let us introduce terminology.

Given a permutation $\sigma$ of size $n$ (\emph{i.e.} a sequence $\sigma(1) \dots \sigma(n)$ containing exactly once each integer from $1$ to $n$), 
and given a subset $I = \{i_1 < \dots < i_k\}$ of $\{1, \dots, n\}$, 
the \emph{pattern} of $\sigma$ induced by $I$ is the permutation $\pi$ of size $k$ such that $\pi(\ell) < \pi(m)$ if and only if $\sigma(i_\ell) < \sigma(i_m)$. 
The study of patterns in permutations is an active research topic, 
particularly in enumerative combinatorics, 
see \emph{e.g.} \cite{PermClasses,Kitaev} and references therein. 
The relation ``is a pattern of'' is a partial order on the set of all permutations (of all finite sizes), 
and \emph{permutation classes} are downsets for this order. 
Equivalently, permutation classes can be defined as sets of permutations characterized by the avoidance of a (finite or infinite) set of patterns.

\begin{definition}
A \emph{separable permutation} is a permutation which avoids the patterns $2413$ \linebreak and~$3142$.  %ok
\end{definition}
Separable permutations enjoy many other characterizations, including the following (the related terminology is defined 
later in this paper if needed, or \emph{e.g.} in~\cite{PermClasses}):
\begin{itemize}
 \item they are the permutations whose inversion graph is a cograph; 
 \item they can be obtained from permutations of size $1$ by performing direct sums and skew sums; 
 \item no simple permutation appears in their substitution decomposition. 
\end{itemize}
The class of separable permutations is natural, well-studied, and displays many nice properties; 
we refer the reader to~\cite[end of Section 1.1]{Nous1} for a presentation of these properties and a review of literature.
We shall also review some of them in \cref{Sec:ConstantsSeparablePermutations}. 

We can now state our analog of \cref{thm:sublinear} for separable permutations. 

\begin{theorem}
  \label{thm:sublinear_SeparablePermutations}
  For each $n \ge 1$, 
  let $\bm{\sigma_n}$ be a uniform random separable permutation of size $n$.
  Then, the maximal length of an increasing subsequence in $\bm \si_n$ is sublinear in $n$, namely 
  $\frac{\LIS(\bm{\sigma_n})}{n}$ converges to $0$ in probability.
\end{theorem}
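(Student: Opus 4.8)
\emph{Proof proposal.} The plan is to transfer \cref{thm:sublinear_SeparablePermutations} to the cograph setting of \cref{thm:sublinear} through the inversion graph correspondence. Recall that the \emph{inversion graph} $G_\sigma$ of a permutation $\sigma$ of size $n$ has vertex set $\{1,\dots,n\}$, with an edge between $i<j$ exactly when $(i,j)$ is an inversion, i.e.\ $\sigma(i)>\sigma(j)$. A set of positions is then an independent set of $G_\sigma$ if and only if the values of $\sigma$ at these positions are increasing, so that $\LIS(\sigma)=\Ind(G_\sigma)$ (and symmetrically $\LDS(\sigma)=\LDS(G_\sigma)$, a decreasing subsequence being a clique); moreover $\sigma$ is separable if and only if $G_\sigma$ is a cograph. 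Hence, writing $\bm{H_n} := G_{\bm{\sigma_n}}$, which is a random cograph, it suffices to prove that $\Ind(\bm{H_n})=o_P(n)$.

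The key point is that the proof of \cref{thm:sublinear} should be robust: it does not really use that $\bm{G_n}$ is \emph{uniform} among (un)labeled cographs, only that $\bm{G_n}$ converges, in the graphon sense, to the Brownian cographon, and it then exploits the self-similarity of that limiting object. I would therefore first check that $\bm{H_n}$ also converges to the Brownian cographon. This follows from the permuton convergence of uniform random separable permutations to the Brownian separable permuton, established in \cite{Nous1}, together with the compatibility of the ``permuton-to-graphon'' map with these limits, namely the fact that the graphon attached to the Brownian separable permuton is precisely the Brownian cographon. Granting this, \cref{thm:sublinear_SeparablePermutations} follows by running the graphon-level argument behind \cref{thm:sublinear} with the sequence $\bm{H_n}$ in place of $\bm{G_n}$.

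An alternative, more self-contained route would bypass graphs entirely and work with permutons directly: uniform random separable permutations converge to the Brownian separable permuton, which is self-similar in the sense that the restriction of its diagram to a suitably chosen random sub-rectangle is, up to rescaling and a random sign, an independent copy of itself. An increasing subsequence of $\bm{\sigma_n}$ corresponds asymptotically to an ``increasing'' subset of the support of the permuton (a subset on which both coordinates induce the same linear order), and a recursive estimate fed by the self-similarity shows that every such subset carries zero mass; transferring this back to the discrete level yields $\LIS(\bm{\sigma_n})=o_P(n)$.

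In either route the delicate step --- and the place where the self-similarity is genuinely needed --- is the transfer itself: neither graphon (cut-distance) convergence nor weak convergence of permutons controls $\Ind(\cdot)/n$, respectively $\LIS(\cdot)/n$, as a continuous functional, so one cannot simply read the conclusion off the fact that the limiting object has ``no large independent/increasing part''. The recursive argument built on self-similarity is exactly what replaces such a (in general false) continuity statement, and adapting it from the cograph diagram to the permutation diagram --- keeping track of the extra order structure and of the signs decorating the associated decomposition tree --- is where I expect the real work to lie.
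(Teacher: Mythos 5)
Your first route is exactly the paper's proof: the paper shows that the inversion graphs of $\bm{\sigma_n}$ converge in distribution to the Brownian cographon $\bm W^{1/2}$ (\cref{prop:inv}, built on the permuton convergence of uniform separable permutations from \cite{Nous1} and the identification of the image of the Brownian separable permuton under the inversion map), and then invokes the universal graph statement \cref{thm:graphes_universel}. One small correction to your closing paragraph: the transfer from the limit object back to the discrete sequence is \emph{not} where the self-similarity is used --- it is handled by the (true) semi-continuity $\limsup\ind(W_n)\le\ind(W)$ of Hladk\`y--Rocha (\cref{prop:SemiContinuity}); the self-similarity recursion serves only to prove that $\ind(\bm W^{p})=0$ almost surely.
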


Two remarks about this statement.
First, the above sublinearity result does not only apply to separable permutations, 
but also to any permutation class having a Brownian separable permuton as {\em permuton} limit 
-- see \cref{ssec:intro_proofs}.
Second, as for cographs, we unfortunately did not find a better lower bound for $\LIS(\bm{\sigma_n})$ than the trivial
$\sqrt{n}$ one. The same argument as above applies, where \eqref{eq:InequalityIndClique} is replaced by Erd\H{o}s--Szekeres's Lemma (see \emph{e.g.} \cite[Th.1.2]{Romik}).

We make a further remark about the relation between \cref{thm:sublinear,thm:sublinear_SeparablePermutations}. 
Recall that for any permutation $\sigma$ of size $n$, its \emph{inversion graph} (denoted $\inversion(\sigma)$) is 
the unlabeled version of the graph with vertex set $\{1, \dots, n\}$ where there is an edge between $i$ and $j$ if and only if $i$ and $j$ form an inversion in $\sigma$, that is $(i-j)(\sigma(i)-\sigma(j))<0$.
Clearly, through this correspondence, an increasing sequence in $\sigma$ is mapped to an independent set in $\inversion(\sigma)$. 
Nevertheless, \cref{thm:sublinear_SeparablePermutations} is not simply the translation of \cref{thm:sublinear}
from the graph setting to the permutation setting. 
Indeed, since the inversion graph correspondence is not one-to-one,
for $\bm{\sigma_n}$ a uniform random separable permutation, 
$\inversion({\bm \sigma_n})$ is not a uniform random unlabeled cograph. 
(We further note that defining $\inversion(\sigma)$ as a labeled cograph in the obvious manner, 
$\inversion({\bm \sigma_n})$ would also not be a uniform random labeled cograph.) 

%\medskip

We also establish a counterpart of \cref{th:AsymptX_nalpha} for increasing subsequences in separable permutations. 
\begin{theorem}
\label{thm:expectation_permutations}
    For each $n \ge 1$, 
    let $\bm{\sigma_n}$ be a uniform random separable permutation of size $n$,
    and let $\bm Z_{n,k}$ be the number 
    of increasing subsequences of length $k$ in $\bm \sigma_n$.
    Then there exist some computable functions $D_\beta >0$, $E_\beta >0$ ($0< \beta < 1$)
    with the following property.
For every fixed closed interval $[a,b] \subseteq (0,1)$, we have
\begin{equation}\label{eq:AsymptotiqueEZ_n,k}
\mathbb{E}[\bm Z_{n,k}] \sim D_{k/n} \, n^{-1/2} (E_{k/n})^n
\end{equation}
uniformly for $an \le k \le bn$.
Furthermore,
\begin{enumerate}
\item When $\beta \to 0$, we have
$
E_\beta=1+\beta|\log(\beta)| +\mathrm{o}(\beta\log(\beta)).
$
\item Consequently, there exists $\beta_1>0$ such that $E_\beta >1$ for every $\beta \in (0,\beta_1)$ ;
numerically, we can estimate \[\beta_1 \approx 0.5827\dots\]
\end{enumerate}
\end{theorem}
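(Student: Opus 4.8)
The plan is to reduce \cref{thm:expectation_permutations} to an asymptotic analysis of a bivariate generating function, mirroring the strategy announced for \cref{th:AsymptX_nalpha}. Write $\bm Z_{n,k}$ for the number of increasing subsequences of length $k$ in $\bm\sigma_n$ and $S_n$ for the number of separable permutations of size $n$, so that $\ex[\bm Z_{n,k}] = Z_{n,k}/S_n$ where $Z_{n,k} = \sum_{\sigma} Z_k(\sigma)$ counts pairs (a separable permutation of size $n$, a marked increasing subsequence of length $k$ in it). The first step is to find a functional equation for the bivariate generating function $F(z,u) = \sum_{n,k} Z_{n,k} z^n u^k$. Using the substitution decomposition of separable permutations --- every separable permutation of size $\ge 2$ is uniquely a direct sum $\oplus$ of $\oplus$-indecomposable blocks or a skew sum $\ominus$ of $\ominus$-indecomposable blocks --- one tracks how a marked increasing subsequence splits across blocks. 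In a direct sum, an increasing subsequence is an arbitrary concatenation of increasing subsequences of the parts (including empty ones), whereas in a skew sum an increasing subsequence lives entirely inside a single part. This asymmetry between $\oplus$ and $\ominus$ is exactly what makes the problem tractable and gives a polynomial system for $F$, analogous to the classical algebraic equation $S(z) = z + \tfrac{2S(z)^2}{1+S(z)}$ for separable permutations.

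Once the system is set up, the second step is a singularity analysis of $F(z,u)$ viewed as a function of $z$ with $u > 0$ a parameter, to extract the asymptotics of $[z^n]F(z,u)$, and then a saddle-point / large-powers argument to pass from the generating-function level to the diagonal-type asymptotics $\ex[\bm Z_{n,k}] \sim D_{k/n}\, n^{-1/2} (E_{k/n})^n$ uniformly for $an\le k\le bn$. Concretely, if $\rho(u)$ denotes the dominant (square-root-type) singularity of $z\mapsto F(z,u)$, then $[z^n]F(z,u) \sim c(u)\, n^{-3/2}\rho(u)^{-n}$, and dividing by $S_n \sim c\, n^{-3/2} \rho_S^{-n}$ (with $\rho_S = 3-2\sqrt2$) and optimizing $u^{-k}[z^n]F(z,u)$ over $u$ via a standard transfer of the Cauchy-coefficient saddle point yields \eqref{eq:AsymptotiqueEZ_n,k} with $E_\beta$ defined implicitly through the Legendre-type relation $\log E_\beta = \min_{u>0}\big(\beta\log u - \log(\rho(u)/\rho_S)\big)$, wait --- more precisely $E_\beta = \inf_{u>0} u^{-\beta}\rho_S/\rho(u)$; the implicit-function theorem guarantees $E_\beta$ is computable with arbitrary precision. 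Uniformity over $[a,b]$ comes from the smoothness of $\rho(u)$ and of the saddle point on the corresponding compact $u$-range, exactly as in the quasi-power / Hwang-type framework.

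The third step is the asymptotic regime $\beta\to 0$, i.e. item (i). Here one does not need the full strength of the saddle-point formula: the dominant contribution to $Z_{n,k}$ when $k$ is a tiny fraction of $n$ should come from separable permutations that are essentially a direct sum of $n-k$ singletons and a few small increasing blocks, or rather from a direct sum of $\Theta(k)$ ascending runs --- a combinatorial $\binom{n}{k}$-type count tempered by the exponential growth $\rho_S^{-n}$ of $S_n$. Writing $\ex[\bm Z_{n,k}]$ as roughly $\binom{n}{k}$ times a correction and using $\binom{n}{\beta n}^{1/n} \to \beta^{-\beta}(1-\beta)^{-(1-\beta)}$ gives $\log E_\beta = \beta|\log\beta| + O(\beta)$ as $\beta\to 0$, hence $E_\beta = 1 + \beta|\log\beta| + \mathrm{o}(\beta\log\beta)$; item (ii) follows immediately since $E_\beta > 1$ for small $\beta$ by continuity, and the numerical value $\beta_1 \approx 0.5827$ is obtained by solving $E_\beta = 1$ numerically from the implicit characterization of $\rho(u)$. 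It is worth remarking that the leading term of $E_\beta$ coincides with that of $C_\beta$ in \cref{th:AsymptX_nalpha}: this is expected, because in the sparse regime the inversion-graph map is "almost injective" on the relevant structures.

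The main obstacle I anticipate is not the singularity analysis per se but getting the bivariate functional equation right and in a form amenable to analysis: one must carefully handle the marking of increasing subsequences through the (asymmetric) substitution decomposition, including empty subsequences and the indecomposability constraints, and verify that the resulting algebraic system still has an isolated square-root singularity in $z$ whose location $\rho(u)$ depends smoothly and strictly monotonically on $u$ (so that the saddle point is unique and non-degenerate for every $\beta \in (0,1)$). A secondary technical point is establishing the claimed uniformity in $k$ over $[an,bn]$; this requires the singularity $\rho(u)$ to stay simple and bounded away from other singularities uniformly for $u$ in the compact range corresponding to $[a,b]$, which should hold but must be checked. Once the equation and these analytic properties are in hand, \eqref{eq:AsymptotiqueEZ_n,k} and items (i)--(ii) follow by the now-standard machinery, in complete parallel with the proof of \cref{th:AsymptX_nalpha}.
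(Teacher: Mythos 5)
Your route for the main estimate \eqref{eq:AsymptotiqueEZ_n,k} is essentially the paper's: encode separable permutations via their substitution decomposition, observe that an increasing subsequence of a direct sum is a concatenation of increasing subsequences of the blocks while a nonempty increasing subsequence of a skew sum sits inside a single block, derive an algebraic system for the bivariate series (the paper's $S_\ominus$, $S_\oplus$ in \eqref{eq:Sominus}--\eqref{eq:Soplus}), and extract \eqref{eq:AsymptotiqueEZ_n,k} from the square-root singularity $r(u)$ together with a saddle-point argument (the paper invokes Drmota's multivariate theorem). Your $E_\beta=\inf_{u>0}u^{-\beta}\rho_S/\rho(u)$ agrees with the paper's $E_\beta=\rho_S/\bigl(r(u(\beta))\,u(\beta)^{\beta}\bigr)$ since $1/(3+2\sqrt2)=3-2\sqrt2=\rho_S$. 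One caveat you anticipate but understate: the implicit function $G$ has negative coefficients, so the smooth implicit-function schema does not apply off the shelf; the paper must re-establish by hand that the dominant singularity of $S_\ominus(\cdot,u)$ is located at the unique positive solution of the characteristic system, exploiting the larger analyticity domain of $G$. This is a real (if routine) piece of work, not just ``checking'' smoothness of $\rho(u)$.

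The genuine gap is item (i). You derive $E_\beta=1+\beta|\log\beta|+\mathrm o(\beta\log\beta)$ from a heuristic --- the dominant contribution ``should'' come from near-direct-sums of singletons, giving ``roughly $\binom{n}{k}$ times a correction.'' A lower bound of this shape is plausible, but the matching upper bound (that no other family of separable permutations contributes more, up to a factor $e^{O(\beta)n}$) is precisely what the saddle-point analysis exists to provide, and it cannot be read off the binomial coefficient. The paper proves (i) differently and rigorously: it parametrizes the characteristic system by $y=S(r(u))$, obtains explicit expressions of $r(u)$, $s(u)$, $u$ and $\beta$ as functions of $y$, Taylor-expands around the critical value $y=S(\rho)=(2-\sqrt2)/2$ to get $u(\beta)=\lambda\beta+O(\beta^{2})$ and $r(u(\beta))=\rho_S+O(\beta)$, and concludes $E_\beta=u(\beta)^{-\beta}\rho_S/r(u(\beta))=e^{\beta|\log\beta|+O(\beta)}=1+\beta|\log\beta|+\mathrm o(\beta\log\beta)$. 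Since you already have the implicit characterization of $(r(u),s(u))$ in hand, this expansion of the saddle point as $u\to0$ is the step you need to carry out; the combinatorial heuristic can only serve as a consistency check.
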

We observe the same qualitative behavior than for \eqref{eq:AsymptotiqueEX_n,k}: 
$E_\beta$ seems to have a unique maximum (numerically estimated at $\beta\approx 0.2503\dots$).
Moreover, it seems from numerical computations that $E_\beta > C_\beta $ for every $\beta \in(0,1)$ (see \cref{fig:C_beta}). 

\begin{figure}
\begin{center}
\includegraphics[width=7cm]{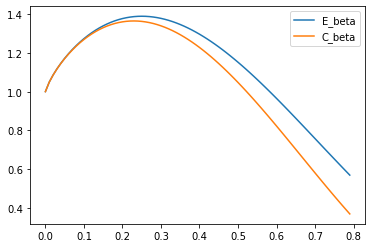}
\caption{Plots of $\beta \mapsto C_\beta$ and $\beta\mapsto E_\beta$.}
\label{fig:C_beta}
\end{center}
\end{figure}

\subsection{Proof methods and universality}
\label{ssec:intro_proofs}
Our sublinearity results are based on limit theorems
for uniform random cographs and uniform random separable permutations.
We first discuss the graph setting.

It is proved in \cite{CographonBrownien,Benedikt}
that a uniform random (labeled or unlabeled) cograph of size $n$
converges in the sense of graphons to a limit $\bm W^{1/2}$,
called the {\em Brownian cographon}
of parameter $1/2$ (see also the independent work of Stufler \cite{Benedikt}).
We refer to \cref{ssec:BrownianCographon} for details.
Moreover, 
the notion of independence number of a graph has been extended to graphons
by Hladk\`y and Rocha \cite{hladky2017independent},
who proved a semicontinuity property for it (see \cref{ssec:IndependenceNbGraphon}).
Combining these two elements, \cref{thm:sublinear} will follow from the fact that
the independence number $\ind(\bm W^{1/2})$ of the  Brownian cographon  
is $0$ a.s (see \cref{ssec:completing_proof}).
To prove the latter, we use the explicit construction of the Brownian cographon
from a Brownian excursion and some self-similarity property of the Brownian excursion
(namely Aldous' decomposition of a Brownian excursion with two independent points
into three independent Brownian excursions of random sizes \cite{Aldous94});
we deduce from that an {\it inequation} in distribution for $\ind(\bm W^{1/2})$ 
(\cref{ssec:autosimilarite}; the use of {\it inequation} instead of {\it inequality}
is justified there)
and we conclude by a fixed point argument (\cref{ssec:FixedPoint}).

An interesting aspect of the proof sketched above is that it relies solely on the fact that
uniform random cographs tend to the Brownian cographon;
moreover the value $p=1/2$ of the parameter of the limit is irrelevant in the proof.
Convergence to the Brownian cographon was proved in \cite{CographonBrownien,Benedikt} both in the labeled
and unlabeled settings, so that \cref{thm:sublinear} is proved simultaneously in both
settings.
In fact, \cref{thm:sublinear} is proved as a special case of the following theorem.
\begin{theorem}
\label{thm:graphes_universel}
Let $\bm G_n$ be a sequence of random graphs tending
to the Brownian cographon $\bm W^p$ for $p \in [0,1)$.
Then the maximum size of an independent set in $\bm G_n$ is sublinear in $n$,
 namely $\frac{\Ind(\bm G_n)}n$ converges to $0$ in probability.
\end{theorem}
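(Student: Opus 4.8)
The plan is to transfer the problem to the graphon limit and then exploit semicontinuity of the independence number together with the a.s.\ vanishing of $\ind(\bm W^p)$. First I would recall from \cite{hladky2017independent} that the independence number extends to graphons by $\ind(\graphon) = \sup\{\int_{[0,1]} \mathbf 1_{S}\,d\Leb : S \text{ "independent" for } \graphon\}$ (the precise definition being recalled in \cref{ssec:IndependenceNbGraphon}), and that $\graphon \mapsto \ind(\graphon)$ is upper semicontinuous for the cut metric. The key deterministic input is then the following: if $\bm G_n \to \bm W^p$ in the graphon sense, then $\limsup_n \Ind(\bm G_n)/n \le \ind(\bm W^p)$ in probability, because $\Ind(G)/|V_G|$ equals (or is within $o(1)$ of) the graphon independence number of the graphon associated with $G$, and upper semicontinuity passes this bound to the limit. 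This reduces everything to showing $\ind(\bm W^p) = 0$ almost surely.

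Next I would isolate the claim $\ind(\bm W^p) = 0$ a.s., which is where the real content lies. The idea, as sketched in \cref{ssec:intro_proofs}, is to use the explicit construction of $\bm W^p$ from a Brownian excursion $\Exc$ decorated with i.i.d.\ signs, and Aldous' self-similarity decomposition \cite{Aldous94}: conditioning on two independent uniform points of $\Exc$ cuts it into three independent rescaled Brownian excursions, with the Dirichlet$(1/2,1/2,1/2)$ vector of relative masses. Under this decomposition the Brownian cographon restricted to each of the three blocks is again a copy of $\bm W^p$ (up to rescaling), and the vertex at the cut point is joined to all of one side and to none of the other (depending on the decorating sign). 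Writing $\bm\Ind$ for a random variable distributed as $\ind(\bm W^p)$, this yields a stochastic inequality roughly of the shape $\bm\Ind \stochleq V_1 \bm\Ind^{(1)} + V_3 \bm\Ind^{(3)}$ where $(V_1,V_2,V_3)\sim\Dirichlet(1/2,1/2,1/2)$ and $\bm\Ind^{(i)}$ are i.i.d.\ copies of $\bm\Ind$ — the point being that an independent set of $\bm W^p$ cannot meet both the "all-joined" side and the middle block once the cut vertex sign is accounted for, so at most two of the three sub-blocks contribute. (The exact combinatorics of which blocks survive, and the role of the sign, is the delicate bookkeeping step.)

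Finally I would run a fixed-point argument on the law of $\bm\Ind$, as in \cref{ssec:FixedPoint}. Since $\bm\Ind \in [0,1]$, set $m = \ex[\bm\Ind]$; taking expectations in the stochastic inequality and using $\ex[V_1] = \ex[V_3] = 1/3$ and independence gives $m \le (2/3)m$, forcing $m = 0$, hence $\bm\Ind = 0$ a.s. (A mild care point: one must ensure the inequality is genuinely $\le_d$ with independent copies, so that the expectation step is valid; alternatively one iterates the decomposition and shows the surviving mass shrinks geometrically.) The main obstacle I anticipate is not the fixed-point conclusion but the two structural lemmas feeding it: (a) making rigorous that $\Ind(\bm G_n)/n$ is controlled from above by $\ind$ of the limiting graphon via semicontinuity — this needs the sampling/cut-distance convergence to be compatible with the definition of $\ind$ in \cite{hladky2017independent}, including possibly passing through a.s.\ representations or subsequences to turn convergence in probability into a usable bound — and (b) correctly identifying, in Aldous' decomposition, exactly how an independent set of $\bm W^p$ distributes among the three sub-excursions and why the decorating sign at the cut forbids it from charging all three, which is what produces the crucial contraction factor strictly below $1$.
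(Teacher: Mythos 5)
Your overall architecture matches the paper's: reduce to $\ind(\bm W^p)=0$ a.s.\ via semicontinuity of $\ind$ (the paper does this through the Skorokhod representation theorem, exactly as you anticipate), then derive a distributional inequality from Aldous' three-excursion decomposition and close with a fixed-point argument. However, the structural step you yourself flag as delicate is the one your proposal gets wrong, and the error propagates into the fixed-point step. The decoration of the local minimum at the cut is \emph{random}: it equals $\Zero$ (disjoint union) with probability $p$ and $\One$ (join) with probability $1-p$. When it is $\Zero$, an independent set of $\bm W^p$ may perfectly well charge \emph{all three} blocks, so your claimed bound $\bm\Ind \stochleq V_1\bm\Ind^{(1)}+V_3\bm\Ind^{(3)}$ is false; there is also no distinguished ``cut vertex'' joined to one side (the cut point is a local minimum of the excursion, not a leaf). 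The correct inequality, which is the paper's \eqref{eq:TheDistributionalInequality} with $\Phi_p$ defined in \eqref{eq:DefY}, keeps the full sum $\bm\Delta_0\bm X_0+\bm\Delta_1\bm X_1+\bm\Delta_2\bm X_2$ with probability $p$, and with probability $1-p$ (join at the cut) replaces $\bm\Delta_1\bm X_1+\bm\Delta_2\bm X_2$ by $\max(\bm\Delta_1\bm X_1,\bm\Delta_2\bm X_2)$ --- that is the only place the join constraint bites, and the outer block always contributes.

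With the correct inequality your expectation computation no longer closes: taking means only gives $m\le m$ at first sight, since $\max$ is bounded by the sum with no visible gain. The paper instead proves that $\Phi_p$ is a weak contraction for the Wasserstein distance on $\mathcal M_1([0,1])$, is monotone for $\stochleq$, and fixes $\delta_0$; compactness then forces $\Phi_p^k(\mu)\to\delta_0$, and iterating $\mu\stochleq\Phi_p^k(\mu)$ gives $\mu=\delta_0$. A first-moment argument can in fact be salvaged, but it requires the identity $\ex[\max(A,B)]=\ex[A]+\ex[B]-\ex[\min(A,B)]$ together with the observation that for $\mu\ne\delta_0$ the quantity $\min(\bm\Delta_1\bm X_1,\bm\Delta_2\bm X_2)$ is positive with positive probability (by independence of the copies and a.s.\ positivity of the Dirichlet coordinates), yielding $\ex[\Phi_p(\mu)]<\ex[\mu]$ for $p<1$ and contradicting $\mu\stochleq\Phi_p(\mu)$. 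Either way, the step you must actually establish --- and currently have wrong --- is the precise form of the inequality produced by the decomposition.
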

By analogy with the realm of permutations (see below), we expect that uniform random
graphs in families of graphs well-behaved for the modular decomposition (\emph{e.g.}, a graph class whose modular decomposition trees are all those obtained from a finite set of prime graphs)
tend to $\bm W^p$, and hence have a sublinear independence number.

%\bigskip

Let us now discuss \cref{thm:sublinear_SeparablePermutations},
{\emph i.e.} the sublinearity of the length of the longest increasing subsequence in
a random separable permutation $\bm \si_n$.
It is known that $\bm \si_n$
tends in the permuton topology to a limit $\bm\mu^{1/2}$, called
the {\em Brownian separable permuton} of parameter $1/2$,
see \cite{Nous1} for the original reference.

%\medskip

As discussed earlier, an increasing subsequence of a permutation corresponds to an independent set of its inversion graph. 
We remark in \cref{ssec:Permutons} that if a sequence of permutations converges in distribution to the Brownian separable permuton of parameter $p$, 
then the corresponding inversion graphs converge in distribution to the Brownian cographon $\bm W^p$.

Hence \cref{thm:graphes_universel} implies the following general result, of which \cref{thm:sublinear_SeparablePermutations} is a particular case
(see \cref{ssec:completing_proof} for details).
\begin{theorem}
\label{thm:permutations_universel}
Let $\bm \si_n$ be a sequence of random permutations tending
to the Brownian separable permuton $\bm \mu^p$ for $p \in [0,1)$.
Then the maximal length of an increasing subsequence in $\bm \si_n$ is sublinear in $n$,
 namely $\frac{\LIS(\bm \si_n)}n$ converges to $0$ in probability.
\end{theorem}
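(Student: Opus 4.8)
The plan is to reduce this statement to \cref{thm:graphes_universel} via the inversion-graph correspondence, exactly along the lines announced in \cref{ssec:intro_proofs}. First I would recall the elementary but crucial observation that for any permutation $\sigma$ of size $n$, an increasing subsequence of $\sigma$ of length $k$ is precisely a set of $k$ indices that are pairwise non-inverted, which is exactly an independent set of size $k$ in the inversion graph $\inversion(\sigma)$; hence $\LIS(\sigma) = \Ind(\inversion(\sigma))$ as a deterministic identity. Consequently, if $\bm \si_n$ is our sequence of random separable permutations (or more generally any sequence of random permutations converging to $\bm \mu^p$), then $\LIS(\bm \si_n)/n = \Ind(\inversion(\bm \si_n))/n$, and it suffices to show that the right-hand side converges to $0$ in probability.

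The second ingredient is the convergence statement announced in \cref{ssec:Permutons}: if $\bm \si_n \to \bm \mu^p$ in distribution in the permuton topology, then the sequence of inversion graphs $\inversion(\bm \si_n)$ converges in distribution, in the graphon topology, to the Brownian cographon $\bm W^p$. Taking this as granted, set $\bm G_n := \inversion(\bm \si_n)$; this is a sequence of random graphs tending to $\bm W^p$ for $p \in [0,1)$, so \cref{thm:graphes_universel} applies verbatim and gives $\Ind(\bm G_n)/n \to 0$ in probability. Combining with the identity of the previous paragraph yields $\LIS(\bm \si_n)/n \to 0$ in probability, which is the claim. Finally, \cref{thm:sublinear_SeparablePermutations} follows by specializing to $\bm \si_n$ a uniform random separable permutation and $p = 1/2$, using the known convergence of uniform random separable permutations to the Brownian separable permuton of parameter $1/2$ from \cite{Nous1}.

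The only genuine content beyond bookkeeping is the graphon-convergence transfer statement ($\bm \si_n \to \bm \mu^p$ implies $\inversion(\bm \si_n) \to \bm W^p$), but this is precisely the result we may assume from \cref{ssec:Permutons}; the subtlety there is that the map $\sigma \mapsto \inversion(\sigma)$ is not injective, so one cannot argue at the level of individual objects and must instead check that permuton convergence (i.e.\ convergence of all pattern densities, or equivalently of the associated random measures) pushes forward to graphon convergence (convergence of all induced-subgraph densities) under the inversion-graph map --- a compatibility that holds because the induced-subgraph density of a fixed graph $H$ in $\inversion(\sigma)$ can be written as a finite sum of pattern densities in $\sigma$, over the finitely many permutations whose inversion graph is $H$. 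Given that, the proof here is essentially a one-line composition, and I expect no real obstacle; the ``hard part'' --- the fixed-point argument controlling $\ind(\bm W^p)$ --- has already been carried out in the proof of \cref{thm:graphes_universel}.
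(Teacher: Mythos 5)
Your proposal is correct and follows essentially the same route as the paper: the identity $\LIS(\sigma)=\Ind(\inversion(\sigma))$, the transfer of convergence $\bm\si_n\to\bm\mu^p$ into $\inversion(\bm\si_n)\to\bm W^p$ (the paper's \cref{prop:inv}), and then a direct application of \cref{thm:graphes_universel}. The only minor difference is your sketched justification of the transfer via sums of pattern densities, whereas the paper proves \cref{prop:inv} through decorated binary trees; but since you are entitled to quote that proposition, this does not affect the argument.
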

We note that the Brownian separable permuton $\bm \mu^p$
has been proved to be a {\em universal limit} for uniform random permutations
in many permutation classes (well-behaved with respect to the substitution decomposition)
\cite{Nous2,Nous3,DecoratedTrees},
so  \cref{thm:permutations_universel} applies to all these classes.

%\bigskip

The technique to prove \cref{th:AsymptX_nalpha,thm:expectation_permutations}
is completely different.
Indeed, the expectation of $\bm X_{n,k}$ 
(resp. $\bm Z_{n,k}$) for $k\sim \beta n$ for some $\beta > 0$ is driven by 
a set of cographs (resp. separable permutations) of small probability 
and can therefore not be inferred from their limit in distribution.
In this case, we use the representation of cographs as cotrees, 
and its analogue for separable permutations through substitution decomposition trees.
These tree representations are useful tools in algorithms both for graphs and permutations
(see \emph{e.g.}~\cite{Habib,Bretscher} for graphs and \cite{BBL} for permutations);
in the case of permutations, substitution decomposition trees have also been widely used in recent years
for enumeration problems (see \cite[Section~3.2]{PermClasses} and references therein).
The tree encoding allows us to write a system of equations 
for the bivariate generating function of cographs with a marked independent set
(resp. separable permutations with a marked increasing subsequence).
We then obtain our results through singularity analysis.
%\medskip

Unlike for \cref{thm:sublinear,thm:sublinear_SeparablePermutations},
the results we prove are specific to either labeled cographs
or separable permutations and do not rely on their Brownian limits.
However, our approach should extend to other families of graphs and permutations
well-encoded by their (modular or substitution) decomposition trees,
but we did not pursue this direction.
One such model would be unlabeled cographs:
in this model, the analytic equations involve the so-called P\'olya operators,
making the analysis more technical 
but we do not expect qualitative differences in the result.

\subsection{Organization of the paper}

The proofs of our two sets of results can be read independently.
\begin{itemize}
\item
Section \ref{Sec:GraphonAlpha} provides the necessary background
regarding graphons and permutons.
Then we prove \cref{thm:graphes_universel,thm:permutations_universel} in 
Section \ref{sec:Inequation}.
\item
The proofs of \cref{th:AsymptX_nalpha,thm:expectation_permutations}
are given in \cref{sec:EstimationDrmota} and \cref{Sec:ConstantsSeparablePermutations},
respectively.
\end{itemize}

\section{Preliminaries: graphons, permutons, \\
independence number
and increasing subsequences}\label{Sec:GraphonAlpha}

We first recall some general material from the theory of graphons (\cref{ssec:grapons_basic}).
We present here the strict minimum needed for this paper;
an extensive presentation can be found in \cite[Chapters 7-16]{LovaszBook}.
Then in \cref{ssec:BrownianCographon,ssec:IndependenceNbGraphon,ssec:Permutons} we review recent material from the literature, used for our proof
of \cref{thm:graphes_universel,thm:permutations_universel}:
\begin{itemize}
  \item the convergence of uniform random cographs to the Brownian cographon;
  \item the notion of independence number of graphons;
  \item a connection between graphons and the analogue theory for permutations, that of {\em permutons}.
\end{itemize}
\subsection{Basics on graphons}
\label{ssec:grapons_basic}
A graphon (contraction for {\em graph function}) is a measurable symmetric function from $[0,1]^2$ to~$[0,1]$.
Intuitively, we can think of it as the adjacency matrix of an infinite (weighted) graph with vertex set $[0,1]$.
A finite graph $G$ with vertex set $\{1,\dots,n\}$ can be seen as a graphon $W_G$ as follows:
$W_G(x,y)=1$ if the vertices with labels $\lceil x n \rceil$ and $\lceil y n \rceil$ are connected in $G$
($\lceil z \rceil$ being the nearest integer above $z$, with the unusual convention $\lceil 0 \rceil=1$) and $W_G(x,y)=0$ otherwise.

%\medskip

{\em Sampling.}
Let $W$ be a graphon and $k$ a positive extended integer ({\emph i.e.} $k \in \mathbb Z_{>0} \cup \{+\infty\}$).
We consider two independent families $(\bm U_i)_{1 \le i\le k}$ and $(\bm X_{i,j})_{1\leq i<j \leq k}$
of i.i.d. uniform random variables in $[0,1]$.
Given this, we define a random graph $\Sample_k(W)$ as follows\footnote{In \cite{LovaszBook}, $\Sample_k(W)$ is denoted $\mathbb G(k,W)$.}:
its vertex set is $[k]:=\{1,\dots,k\}$
and for every $i, j$, vertices $i$ and $j$ are connected iff $\bm X_{i,j} \le W(\bm U_i,\bm U_j)$.
In other words vertices $i$ and $j$ are connected
with probability $W(\bm U_i,\bm U_j)$, independently
of each other conditionally on the sequence $(\bm U_i)_{1 \le i\le k}$.

We note that, for $k' > k$, the restriction $\Sample_{k'}(W)[k]$ of $\Sample_{k'}(W)$ to the vertex~set~$[k]$
has the same distribution as $\Sample_{k}(W)$.
In particular, the random graph \linebreak %ok
 $\Sample_{\infty}(W)$ induces a realization
of all $\Sample_{k}(W)$ in the same probability space.
%\medskip

{\em Convergence.}
By definition, a sequence of graphons $(W_n)$ converges to a graphon $W$
if, for all $k$, $\Sample_{k}(W_n)$ converges in distribution to $\Sample_{k}(W)$.
It can be shown that this is equivalent to the convergence for the so-called cut-distance; see \cite[Theorem 11.5]{LovaszBook}.
We note that the graphon limit is unique only up to some equivalence relation,
called {\em weak equivalence} \cite[Sections 7.3, 10.7, 13.2]{LovaszBook}. Moreover, the quotient of the set of graphons by the weak equivalence relation, equipped with the cut-distance metric, is a compact metric space, that we shall call from now on \textit{the space of graphons}.
Finally, we say that a sequence of graphs~$(G_n)_{n\geq 1}$ converges to a graphon $W$
if the associated graphons $(W_{G_n})$ converge to $W$ in the space of graphons, and that a sequence of random graphs $(\bm G_n)_{n\geq 1}$ converges in distribution to a random graphon $\bm W$, if $W_{\bm G_n}$ converges to $\bm W$ in distribution, as random elements of the space of graphons.

\subsection{Convergence to the Brownian cographon}
\label{ssec:BrownianCographon}

Let $\Exc:[0,1] \to \R$ denote a Brownian excursion of length one.
We recall that, a.s., $\Exc$ has a countable set of local minima, which are all strict and have distinct values\footnote{That $\Exc$ has a.s.~only strict local minima with distinct values
is folklore -- the interested reader may find a proof in \cite[Appendix A]{Nous1}.
This implies readily that the set of local minima is a.s.~countable.}.
Let us denote $\{\bm b_i(\Exc), i \ge 1\}$ an enumeration of the positions of these local minima.
It is possible to choose this enumeration in such a way that the $\bm b_i$'s and the subsequent functions
defined in this section are measurable;
we refer to \cite[Lemma 2.3]{MickaelConstruction} and \cite[Section 4]{CographonBrownien} for details.

We now choose i.i.d. Bernoulli variables $\bm s_i$ with $\proba(\bm s_i = 0) = p$, independent from the foregoing, and write $\bm S^p=(\bm s_i)_{i \ge 1}$.
We call $(\Exc,\bm S^p)$ a \textit{decorated Brownian excursion}, 
thinking of the variable $\bm s_i$ as a decoration attached to the local minimum $\bm b_i(\Exc)$.

For $x,y\in[0,1]$, we define $\Dec(x,y;\Exc,\bm S^p)$ to be
the decoration of the minimum of $\Exc$ on the interval $[x,y]$ (or $[y,x]$ if $y\le x$; we shall not repeat this precision below).
If this minimum is not unique or attained in $x$ or $y$ and therefore not a local minimum,
$\Dec(x,y;\Exc,\bm S^p)$ is ill-defined and we take the convention $\Dec(x,y;\Exc,\bm S^p)=\Zero$.
Note however that, for uniform random $x$ and $y$, this happens with probability $0$,
so that the object constructed in \cref{def:BrownianCographon} below is independent from this convention.
\begin{definition}\label{def:BrownianCographon}
	The Brownian cographon $\bm W^p$ of parameter $p$ is the random function
    $$
\begin{array}{ r c c c}
\bm W^p: & [0,1]^2 &\to& \{0,1\};\\
 & (x,y) & \mapsto & \Dec(x,y;\Exc,\bm S^p).
\end{array}
$$
\end{definition}

For example, in  Fig.\ref{fig:DecompoAldous} if the decoration
at $b$  is $0$ (resp. $1$), then the graphon is constant equal to $0$ (resp. $1$) on the rectangle $(a,b) \times (b,c)$.

The following was proved independently in \cite{Benedikt} and \cite{CographonBrownien}.
\begin{theorem}\label{th:BrownianCograph}
  Uniform random cographs (either labeled or unlabeled) converge in distribution
  to the Brownian cographon of parameter $1/2$, in the space of graphons.
\end{theorem}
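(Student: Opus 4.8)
The plan is to encode cographs by their canonical cotrees, prove that the cotree of a uniform random cograph converges after rescaling to the Brownian continuum random tree (CRT), keep track of the $\oplus/\otimes$ decorations along the way, and translate the result into graphon convergence through the subgraph-sampling characterization recalled in \cref{ssec:grapons_basic} (a sequence of random graphs converges in distribution to a random graphon iff, for all $k$, its $k$-vertex induced-subgraph distributions converge).

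First, recall that a labeled cograph $G$ on vertex set $[n]$ is encoded bijectively by its \emph{canonical cotree} $T_G$: a rooted unordered tree with $n$ leaves labeled by $[n]$ and internal nodes of arity at least $2$, each internal node decorated by $\oplus$ or $\otimes$ so that a node and its parent carry different decorations, with $i,j$ adjacent in $G$ iff the decoration of their most recent common ancestor in $T_G$ is $\otimes$. Decomposing a cograph according to whether it is a disjoint union or a join (using the $\mathrm{SET}_{\geq 2}$ construction) yields a system of functional equations for the exponential generating function of labeled cographs, exhibiting a square-root singularity; this places the random cotree $\bm T_n$ in the domain of attraction of the CRT. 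Concretely, I would realize $\bm T_n$ (leaf-labels forgotten) as a two-type Bienaymé--Galton--Watson tree conditioned to have $n$ leaves and invoke the now-standard limit theorems for conditioned (multitype, simply generated) random trees: equipped with the uniform probability measure on its leaves and graph distance rescaled by $c\, n^{-1/2}$, $\bm T_n$ converges in distribution, for the Gromov--Hausdorff--Prokhorov topology, to the CRT $\mathcal T_{\Exc}$ coded by a Brownian excursion.

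Next, pass to the decorations. By the alternation rule, the decoration of an internal node is a deterministic function of the parity of its depth. Sampling $k$ i.i.d.\ uniform leaves of $\bm T_n$ amounts to sampling $k$ i.i.d.\ uniform vertices of $\bm G_n$; their pairwise most recent common ancestors form a reduced subtree with $k-1$ branch points, and the CRT convergence shows this reduced subtree (with its rescaled branch-point heights) converges to the reduced subtree of $k$ uniform points of $\mathcal T_{\Exc}$, which is a.s.\ binary. It then remains to show that the parities of the depths of these $k-1$ branch points converge jointly to i.i.d.\ fair bits, asymptotically independent of the rescaled reduced subtree; granting this, the induced subgraph on the $k$ sampled vertices converges in distribution to the graph on $[k]$ in which two points are adjacent iff the decoration attached to their reduced-tree branch point is $\otimes$, with i.i.d.\ fair decorations --- which is exactly $\Sample_k(\bm W^{1/2})$. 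The marginals are in fact exact and free of analysis: complementation is a measure-preserving involution on labeled cographs of size $n$ that flips every cotree decoration, so the decoration of the most recent common ancestor of two fixed vertices is exactly $\mathrm{Bernoulli}(1/2)$ for every $n$, which forces the parameter of the limit to be $1/2$. The joint asymptotic independence (and uniformity) of the parities is where genuine work is needed: it follows from a local limit theorem for the joint heights of several uniform vertices in a conditioned random tree, which makes the microscopic parity of a quantity with a nondegenerate continuous rescaled limit equidistribute and decouple. Since $k$ is arbitrary, this yields $W_{\bm G_n} \to \bm W^{1/2}$ in distribution.

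For the unlabeled case one repeats the argument with unlabeled cotrees: the enumeration now involves P\'olya-type operators in the functional equations, but the singularity stays of square-root type, the cotree still converges to the CRT (the contribution of tree automorphisms being asymptotically negligible), and the decoration analysis is unchanged, so the limit is again $\bm W^{1/2}$. I expect the main obstacle to be the combination of (i) setting up the cotree precisely as a random tree model to which the CRT-convergence machinery genuinely applies --- the non-uniform weighting of tree shapes coming from the leaf-labelings and from the exponential/$\mathrm{SET}$ structure must be checked to lie in the finite-variance regime --- and (ii) the local-limit-theorem input needed to turn the alternating cotree decorations into i.i.d.\ fair decorations in the limit.
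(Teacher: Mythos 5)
First, a point of context: the paper does not actually prove \cref{th:BrownianCograph}; it imports it from \cite{Benedikt} and \cite{CographonBrownien}, so I am comparing your sketch against those proofs. Your route --- a Gromov--Hausdorff--Prokhorov scaling limit of the cotree towards the CRT, followed by a separate treatment of the alternating decorations --- is essentially the strategy of \cite{Benedikt} (random enriched trees). The proof in \cite{CographonBrownien} is lighter and worth knowing: by the sampling characterization of graphon convergence recalled in \cref{ssec:grapons_basic}, one only needs, for each fixed $k$, the limit law of the \emph{decorated reduced subtree} spanned by $k$ uniform leaves of the cotree, and this law is computed directly via generating functions with marked leaves (a two-series system of the same flavour as the $C_0/C_1$ system of Section 4, which handles the alternation constraint); singularity analysis then yields the uniform-binary-shape-with-i.i.d.-fair-signs limit with no metric convergence and no local limit theorem. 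Your observation that complementation is a measure-preserving involution flipping all decorations is correct and gives the exact Bernoulli$(1/2)$ marginal, but it only pins down one-dimensional marginals, not the joint law.

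The genuine gap is exactly where you locate it: the joint equidistribution and asymptotic independence of the depth parities of the $k-1$ branch points. This does \emph{not} follow merely from these depths having a nondegenerate continuous rescaled limit (simple random walk at time $n$ has a Gaussian rescaled limit but deterministic parity), so an actual aperiodicity/local-limit argument is required --- and not only for the depths themselves but for the depth \emph{differences} between nested branch points, since the decorations of an ancestor--descendant pair differ by the parity of their tree distance. This is provable but is the technical heart of the argument, and as written it is asserted rather than established; the published proof in \cite{CographonBrownien} bypasses it entirely by computing the law of the decorated reduced subtree exactly. A secondary caveat: ``two-type BGW tree conditioned to have $n$ leaves'' needs care --- the right framework is simply generated trees conditioned on the number of leaves, with weights absorbing the $\mathrm{SET}_{\geq 2}$/labelling structure, and the unlabeled case additionally requires P\'olya-tree machinery; both are available in the literature but are not off-the-shelf one-type Galton--Watson results.
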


\subsection{Independence number of a graphon and semi-continuity}
\label{ssec:IndependenceNbGraphon}
Let $W$ be a (deterministic) graphon.
Following Hladk\`y and Rocha \cite{hladky2017independent}, we define an independent set $I$ of a graphon $W$
as a measurable subset of $[0,1]$ such that $W(x,y)=0$
for almost every $(x,y)$ in $I \times I$.
The {\em independence number} $\ind(W)$ of a graphon $W$ 
is then 
\begin{equation}
  \ind(W) = \sup_{I \subset [0,1] \atop I\text{ independent set of }W} \Leb(I),
  \label{eq:DefInd}
\end{equation}
where $\Leb(I)$ denotes the Lebegue measure of $I$.
Note that  $\ind(W)$ is attained by some independent set
  $I$ (that is, the supremum in Eq.~\eqref{eq:DefInd} is in fact a maximum):
  this follows from {\it e.g.} \cite[Lemma 2.4]{hladky2019Independent}.

Clearly, for a graph $G$ we have 
\begin{equation}\label{eq:alpha_tilde_vrai_graphe}
\ind(W_G)=\Ind(G)/|V_G|,
\end{equation}
where $\Ind(G)$ is the maximum size of an independent set in $G$.

Of crucial interest for this paper is the lower semi-continuity of the function
$\ind$ on the space of graphons \cite[Corollary 7]{hladky2017independent}.
 Concretely,
this says the following.
\begin{proposition}
  \label{prop:SemiContinuity}
  Suppose that $(W_n)_{n \ge 1}$ is a sequence of graphons that converges to some $W$ in the space of graphons.
  Then
  $\limsup \ind(W_n) \le \ind(W)$.
\end{proposition}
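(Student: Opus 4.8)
The plan is to deduce the statement from the sampling description of graphon convergence recalled in \cref{ssec:grapons_basic}: since the $W_n$ and $W$ are deterministic and $W_n \to W$ in the space of graphons, we have $\Sample_k(W_n) \to \Sample_k(W)$ in distribution for every fixed $k$. It is worth stressing that one cannot argue purely with the cut metric: a sequence of graphs carrying linear-sized independent sets may converge to a graphon $W$ every positive-measure subset of which carries edges (think of long ``interval-type'' graphs converging to $(x,y)\mapsto \mathds 1[\,|x-y|<\eps\,]$), so an ``almost-independent'' set of $W$ need not contain a large genuine one; the argument must therefore see the finite graphs, not only $W$. This is precisely the content of \cite[Corollary 7]{hladky2017independent}; here is how I would organize its proof. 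Fix $\delta>0$.

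\emph{First fact (lower bound on samples, uniform in the graphon).} For any graphon $V$ and any $k\ge 1$,
\[
\proba\!\big(\ind(\Sample_k(V)) \ge \ind(V)-\delta\big) \ge 1 - e^{-c\delta^2 k}
\]
for a universal constant $c>0$. Indeed, choose an independent set $I$ of $V$ with $\Leb(I)\ge \ind(V)-\delta/2$. By a Chernoff bound, the set of indices $i\le k$ with $\bm U_i\in I$ has size at least $(\ind(V)-\delta)k$ with probability at least $1-e^{-c\delta^2 k}$. Moreover, since $V=0$ almost everywhere on $I\times I$, and since conditionally on both landing in $I$ the pair $(\bm U_i,\bm U_j)$ is uniform on $I\times I$, almost surely $V(\bm U_i,\bm U_j)=0$ for every such pair, so these indices are almost surely pairwise non-adjacent in $\Sample_k(V)$ and thus form an independent set. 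Applied with $V=W_n$, this bound is uniform in $n$.

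\emph{Second fact (upper bound on samples, for the fixed limit graphon).} As $k\to\infty$,
\[
\proba\!\big(\ind(\Sample_k(W)) \le \ind(W)+\delta\big) \longrightarrow 1 .
\]
This is the delicate ingredient and is exactly (a consequence of) \cite[Corollary~7]{hladky2017independent}, so I would simply invoke it; morally the proof goes by a union bound over the $\binom{k}{\gamma k}=e^{O(k)}$ candidate vertex subsets $T$ of size $\gamma k$ with $\gamma=\ind(W)+\delta$, using that conditionally on the $\bm U_i$ the probability that $T$ is independent is $\prod_{i<j\in T}(1-W(\bm U_i,\bm U_j))\le \exp\!\big(-\sum_{i<j\in T}W(\bm U_i,\bm U_j)\big)$, and then ruling out that a surviving $T$ corresponds to a merely ``almost-independent'' region of $W$. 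Making this last step rigorous — precisely because almost-independent subsets of a graphon need not contain large genuine independent subsets — is the main obstacle, and is the technical heart of \cite{hladky2017independent}.

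\emph{Combining.} Since $\Sample_k(W_n)$ and $\Sample_k(W)$ are random graphs on the fixed finite vertex set $[k]$ and $\Sample_k(W_n)\to\Sample_k(W)$ in distribution, the real-valued variables $\ind(\Sample_k(W_n))$ converge in distribution to $\ind(\Sample_k(W))$; in particular $\proba(\ind(\Sample_k(W_n))>t)\to\proba(\ind(\Sample_k(W))>t)$ for every $t\in\R$. Pick $k$ large enough that the second fact gives $\proba(\ind(\Sample_k(W))>\ind(W)+\delta)<\delta$ and the first fact gives $\proba(\ind(\Sample_k(W_n))\ge \ind(W_n)-\delta)>1-\delta$ for all $n$. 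Then for $n$ large, $\proba(\ind(\Sample_k(W_n))>\ind(W)+\delta)<2\delta$, so with probability $>1-3\delta>0$ the events of the two facts hold simultaneously for $\Sample_k(W_n)$, forcing $\ind(W_n)-\delta\le \ind(W)+\delta$, i.e. $\ind(W_n)\le \ind(W)+2\delta$. Hence $\limsup_n \ind(W_n)\le \ind(W)+2\delta$, and letting $\delta\to0$ gives $\limsup_n \ind(W_n)\le \ind(W)$.
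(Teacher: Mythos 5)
The paper offers no proof of \cref{prop:SemiContinuity}: it is quoted directly from \cite[Corollary 7]{hladky2017independent}, so there is no internal argument to measure yours against, and the question is whether your proposal stands on its own. It does not: there is a genuine gap at exactly the point you flag as the ``technical heart''. Your First fact and the combining step are fine (the First fact is essentially the first half of the paper's proof of \cref{lem:alpha2=alpha}, and the transfer through convergence in distribution is unproblematic because the space of graphs on $[k]$ is finite). But your Second fact --- that $\tfrac1k\Ind(\Sample_k(W)) \le \ind(W)+\delta$ with probability tending to $1$ --- is left unproved, and the justification you give for it is circular: you invoke it as ``a consequence of Corollary 7'', while Corollary 7 \emph{is} the semicontinuity statement under proof. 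Indeed, the paper derives precisely your Second fact \emph{from} \cref{prop:SemiContinuity} (together with the a.s.\ convergence of samples to $W$) inside the proof of \cref{lem:alpha2=alpha}. Your union-bound sketch only yields that, with high probability, every independent $\gamma k$-subset of the sample sits on an ``almost-independent'' region of $W$, and --- as your own interval-graph example shows --- an almost-independent region of a graphon need not contain a large genuinely independent set. So the architecture reduces the proposition to a statement at least as hard as the proposition itself and then does not prove that statement.

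A non-circular proof in fact need not ``see the finite graphs'' at all; the delicate point is not that one must leave the cut metric, but that one must replace the false implication ``almost independent $\Rightarrow$ contains a large independent set'' by a compactness argument for the \emph{fixed} limit $W$. Concretely: choose measure-preserving bijections $\phi_n$ with $\|W_n^{\phi_n}-W\|_\Box\to 0$ and independent sets $I_n$ of $W_n^{\phi_n}$ with $\Leb(I_n)\ge \ind(W_n)-1/n$; then $\int_{I_n\times I_n}W=\int_{I_n\times I_n}(W-W_n^{\phi_n})\le \|W-W_n^{\phi_n}\|_\Box\to 0$. Along a subsequence realizing $\limsup_n\ind(W_n)$, let $f$ be a weak-$*$ limit of $\mathds 1_{I_n}$ in $L^\infty$, so $0\le f\le 1$ and $\int f=\limsup_n\ind(W_n)$. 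For fixed $W\in L^1$ the form $(g,h)\mapsto \int W\,g(x)h(y)\,dx\,dy$ is jointly weak-$*$ continuous on bounded sets (approximate $W$ in $L^1$ by step functions), so $\int W\,f(x)f(y)\,dx\,dy=0$; since $W,f\ge 0$ this forces $W=0$ a.e.\ on $\{f>0\}^2$, and $\{f>0\}$ is an independent set of $W$ of measure at least $\int f$. This is the step your sketch is missing, and it works precisely because the almost-independence defect tends to $0$ while $W$ stays fixed --- the regime your cautionary counterexample does not address.
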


\begin{remark}
In the following, we will consider a random variable
of the kind $\ind(\bm W)$ where $\bm W$ is a random graphon.
For this to make sense, the map $\ind$ should be measurable.
Since it is defined as a supremum over an uncountable set,
its mesurability is not a priori clear. However, it is known that
any semi-continuous function is measurable, so that \cref{prop:SemiContinuity}
implies that $\ind$ is indeed measurable.
We shall not discuss this point further in the paper.
\end{remark}

In the rest of this subsection, we give an alternative definition for $\ind(W)$.
This definition is not needed in the rest of the paper 
(and therefore can be safely skipped);
however, it answers a question raised by Hladk\`y and Rocha \cite[Section 3.2]{hladky2017independent},
who asked for a connection between the statistics $\ind(W)$ 
and subgraph densities (or equivalently, samples) of $W$.
%\medskip

For a graphon $W$, we set
\begin{equation}
\indd(W) = \liminf_{k\to\infty}\ \frac 1k \, \Ind(\Sample_\infty(W)[k]).
\end{equation}
Since $\Sample_\infty(W)$ is a random graph, the right-hand side is a priori a random variable.
We recall that $\Sample_\infty(W)$ is constructed from i.i.d. random variables $\{\bm U_i,\bm X_{i,j},\, 1\leq i<j\}$.
We denote $\mathcal{G}_n$ the $\sigma$-algebra
generated by $\{\bm U_i, \bm X_{i,j},\, n<i<j\}$.
It is a simple exercise to see that $\indd(W)$ 
is measurable with respect to the {\em tail $\sigma$-algebra} $\bigcap_{n \ge 1} \mathcal{G}_n$.
By Kolmogorov's $0-1$ law (easily adapted to our situation with bi-indexed i.i.d. random variables),
$\indd(W)$ is  almost surely equal to a constant.

\begin{lemma}\label{lem:alpha2=alpha}
  For any graphon $W$, we have $\indd(W)=\ind(W)$ almost surely, and the $\liminf$ defining $\indd(W)$ is almost surely an actual limit.
\end{lemma}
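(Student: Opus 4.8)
The plan is to prove the two inequalities $\indd(W) \le \ind(W)$ and $\indd(W) \ge \ind(W)$ separately, and to handle the ``actual limit'' claim along the way. Throughout, recall that $\indd(W)$ is a.s.\ a constant by the tail $\sigma$-algebra argument already given, so it suffices to compare this constant with $\ind(W)$.

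\textbf{Upper bound $\indd(W) \le \ind(W)$.} The idea is that the finite graphs $\Sample_k(W)$ converge, in distribution and in fact almost surely in the cut metric, to $W$ itself. Indeed, it is a standard fact of graphon theory (see \cite[Chapter 11]{LovaszBook}) that if $\bm H_k := \Sample_k(W)$, then $W_{\bm H_k} \to W$ almost surely in the space of graphons as $k \to \infty$; this is precisely the statement that sampling is a consistent estimator of the graphon. Applying \cref{prop:SemiContinuity} to the (random) sequence $(W_{\bm H_k})_k$ gives, almost surely, $\limsup_k \ind(W_{\bm H_k}) \le \ind(W)$. By \eqref{eq:alpha_tilde_vrai_graphe} we have $\ind(W_{\bm H_k}) = \Ind(\Sample_\infty(W)[k])/k$ (using the coupling provided by $\Sample_\infty(W)$), so $\limsup_k \frac1k \Ind(\Sample_\infty(W)[k]) \le \ind(W)$ a.s., and in particular $\indd(W) = \liminf_k \frac1k \Ind(\Sample_\infty(W)[k]) \le \ind(W)$.

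\textbf{Lower bound $\indd(W) \ge \ind(W)$.} Here the plan is: fix $\eps > 0$ and take an independent set $I \subseteq [0,1]$ of $W$ with $\Leb(I) \ge \ind(W) - \eps$. Look at which of the sample points $\bm U_1, \dots, \bm U_k$ fall in $I$; call this random set $S_k := \{ i \le k : \bm U_i \in I\}$. By the law of large numbers $|S_k|/k \to \Leb(I)$ a.s. The vertices in $S_k$ are ``almost'' an independent set of $\Sample_k(W)$: for $i \ne j$ in $S_k$, the edge $ij$ is present with probability $W(\bm U_i, \bm U_j)$, and since $W = 0$ a.e.\ on $I \times I$, conditionally on the $\bm U_i$'s these edge probabilities are $0$ for a.e.\ realization of the points. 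More carefully, $\ex[\,\#\{\text{edges inside } S_k\}\,] = \sum_{i<j} \pr(\bm U_i, \bm U_j \in I,\ \bm X_{i,j} \le W(\bm U_i,\bm U_j)) = \binom{k}{2} \int_{I\times I} W = 0$, so almost surely $S_k$ is a genuine independent set of $\Sample_k(W)$ for every $k$ simultaneously. Hence $\Ind(\Sample_\infty(W)[k]) \ge |S_k|$, and dividing by $k$ and letting $k \to \infty$ gives $\indd(W) \ge \Leb(I) \ge \ind(W) - \eps$ a.s. Since $\eps$ was arbitrary, $\indd(W) \ge \ind(W)$.

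\textbf{The limit claim.} Combining the two bounds gives $\liminf_k \frac1k \Ind(\Sample_\infty(W)[k]) = \indd(W) = \ind(W)$ a.s.; but the upper-bound argument in fact produced $\limsup_k \frac1k \Ind(\Sample_\infty(W)[k]) \le \ind(W)$ a.s.\ as well, so the $\limsup$ and $\liminf$ coincide and the sequence converges. The step I expect to require the most care is the almost-sure convergence $W_{\Sample_k(W)} \to W$ used in the upper bound: one must cite it correctly (it is the consistency of the sampling process, e.g.\ via the fact that sampling densities converge to subgraph densities of $W$ by the law of large numbers, together with the fact that the cut metric is determined by subgraph densities), and one must be careful that \cref{prop:SemiContinuity} is being applied pathwise to a sequence depending on $\omega$, which is legitimate since the conclusion of \cref{prop:SemiContinuity} holds for every convergent deterministic sequence. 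A minor additional point is to make sure the null sets (where $S_k$ fails to be independent, where the LLN fails, where the cut-convergence fails) are handled uniformly; since there are only countably many $k$, a single intersection of full-measure events suffices.
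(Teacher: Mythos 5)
Your proposal is correct and follows essentially the same route as the paper's proof: the lower bound via the law of large numbers applied to the sample points landing in a near-optimal independent set, and the upper bound via the almost-sure convergence of $\Sample_\infty(W)[k]$ to $W$ combined with the semicontinuity of $\ind$ (\cref{prop:SemiContinuity}), with the limit claim falling out of the $\limsup$ bound. The only cosmetic differences are that you justify the independence of $S_k$ by a first-moment computation where the paper simply asserts it, and you use an $\eps$-argument where the paper takes a sequence of independent sets $I_n$ with $\Leb(I_n)\ge \ind(W)-2^{-n}$.
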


\begin{proof}
We first prove $\indd(W) \ge \ind(W)$ almost surely.
Let $I$ be an independent set of $W$.
For any $k\geq 1$, we observe that the set $\bm J_k:=\{j \le k: \bm U_j \in I\}$ 
is a.s. an independent set of $\Sample_\infty(W)[k]$. 

Hence, a.s.
  \[ \frac 1k \Ind(\Sample_\infty(W)[k]) \ge \frac 1k |\bm J_k|.\]
  As $k$ tends to infinity, the law of large numbers asserts that $|\bm J_k|/k$ tends a.s. to $\Leb(I)$.
  Therefore we have a.s. $\indd(W) \ge \Leb(I)$. 
  Since this holds for any independent set $I$ of $W$, we can consider the independant set $I$ that realizes the maximum in Eq.~\eqref{eq:DefInd},
  proving $\indd(W) \ge \ind(W)$ a.s..

  Let us prove the converse inequality.
  It is known that $(\Sample_\infty(W)[k])$ converges a.s.
   to $W$ in the space of graphons
    (e.g. as a consequence of \cite[Lemma 10.16]{LovaszBook}).
  Using \eqref{eq:alpha_tilde_vrai_graphe} and \cref{prop:SemiContinuity} this implies that, a.s.,
  \[ \indd(W) \le \limsup_{k\to\infty} \frac 1k \Ind(\Sample_\infty(W)[k])
  = \limsup_{k\to\infty} \ind(\Sample_\infty(W)[k]) \le \ind(W).\]
  This concludes the proof that 
  almost surely $\indd(W)=\ind(W)$. Moreover in the identity \linebreak $\indd(W) = \liminf_{k\to\infty}\,\frac 1k \, \Ind(\Sample_\infty(W)[k])$ the $\liminf$ is an actual limit. %ok
  \end{proof}

\subsection{The Brownian separable permuton and its relation to the Brownian cographon}
\label{ssec:Permutons}
The theory of permutons (see \cite{FinitelyForcible,Permutons}) plays the same role for limits of permutations as the theory of graphons does for dense graphs. A permuton is a probability measure on the unit square with uniform marginals, and the space of permutons equipped with the weak convergence of measures is a compact metric space. 
We attach to each permutation $\sigma$ of size $n\geq 1$ the measure $\mu_\sigma$ on the unit square with density $(x,y)\mapsto n \mathds 1_{\sigma(\lceil nx \rceil) = \lceil ny \rceil}$, which is a permuton.
This defines a dense embedding of the set of permutations into the space of permutons.

Recall that $\inversion(\sigma)$ denotes the (unlabeled) inversion graph of a permutation $\sigma$. 

\begin{proposition}
\label{prop:inv}
	Let $p \in [0,1]$ and $(\bm \sigma_n)_n$ be a sequence of random permutations such \linebreak that $\mu_{\bm \sigma_n} \xrightarrow[n\to\infty]{d} \bm \mu^p$, %ok
	where $\bm \mu^p$ is the Brownian separable permuton of parameter $p$ defined in \cite[Definition 3.5]{Nous3}.
	Let $\bm G_n = \inversion(\bm \sigma_n)$. 
	Then we have the convergence in distribution $W_{\bm G_n} \xrightarrow[n\to\infty]{d} \bm W^p$ in the space of graphons,
	where $\bm W^p$ is the Brownian cographon of parameter~$p$.
\end{proposition}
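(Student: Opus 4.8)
The plan is to characterize convergence in the space of graphons through the sampling maps $\Sample_k$, to do the analogous thing for permutons, and to connect the two sides by the elementary observation that the inversion graph commutes with passage to induced substructures. Concretely, if $\pi$ is the pattern of a permutation $\sigma$ induced by positions $i_1 < \dots < i_k$, then for all $\ell < m$ the pair $\{\ell, m\}$ is an edge of $\inversion(\pi)$ if and only if $\{i_\ell, i_m\}$ is an edge of the (labeled) inversion graph of $\sigma$; equivalently, $\inversion(\pi)$ is the unlabeled graph underlying the subgraph of $\inversion(\sigma)$ induced by $\{i_1, \dots, i_k\}$. This is immediate from the definitions of an inversion and of a pattern.

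Next I would reduce both convergences to their sampling versions. On the permuton side, I use the standard criterion (see e.g.~\cite{Permutons}) that $\mu_{\bm\sigma_n} \xrightarrow{d} \bm\mu^p$ if and only if, for every fixed $k$, the random pattern $\mathrm{pat}_k(\bm\sigma_n)$ of $\bm\sigma_n$ on $k$ i.i.d.\ points sampled from $\mu_{\bm\sigma_n}$ converges in distribution, as an element of the finite set of permutations of size $k$, to the pattern $\mathrm{Pattern}_k(\bm\mu^p)$ read off from $k$ i.i.d.\ points sampled from $\bm\mu^p$. On the graphon side, since $W_{\bm G_n}$ and $\bm W^p$ take values in $\{0,1\}$, the auxiliary variables $\bm X_{i,j}$ in the definition of $\Sample_k$ play no role, so $\Sample_k(W_{\bm G_n})$ is just the subgraph of $\bm G_n$ induced by $k$ i.i.d.\ uniform vertices. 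Moreover, for random graphons, convergence in distribution of $\Sample_k$ for all $k$ forces convergence in distribution in the space of graphons: that space is compact, so the sequence $(W_{\bm G_n})$ is tight, the map $W \mapsto \Law(\Sample_k(W))$ is continuous for the cut-distance, and the law of a random graphon is determined by the laws of its samples (Diaconis--Janson, Lov\'asz--Szegedy), so every subsequential limit must equal $\bm W^p$.

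It then remains to glue the two sides. For fixed $k$, the event that the $k$ sampled vertices (resp.\ positions) are pairwise distinct has probability $1 - O(k^2/n) \to 1$; on this event, the observation of the first paragraph gives $\Sample_k(W_{\bm G_n}) = \inversion(\mathrm{pat}_k(\bm\sigma_n))$ exactly, and conditioning $\mathrm{pat}_k(\bm\sigma_n)$ on distinctness changes its distribution by $O(k^2/n)$ in total variation, hence not its limit. Since $\inversion(\cdot)$ is a fixed map between finite sets, $\inversion(\mathrm{pat}_k(\bm\sigma_n))$ converges in distribution to $\inversion(\mathrm{Pattern}_k(\bm\mu^p))$, and therefore so does $\Sample_k(W_{\bm G_n})$.

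The final point — identifying the law of $\inversion(\mathrm{Pattern}_k(\bm\mu^p))$ with that of $\Sample_k(\bm W^p)$ — is the step that carries the real content, and the one I expect to require care. Both random graphs can be realized on a common probability space from a decorated Brownian excursion $(\Exc, \bm S^p)$ together with $k$ i.i.d.\ uniform points $\bm U_1, \dots, \bm U_k$ of $[0,1]$: by the explicit construction of the Brownian separable permuton \cite[Definition 3.5]{Nous3}, whether $\{\ell, m\}$ is an inversion of $\mathrm{Pattern}_k(\bm\mu^p)$ is dictated by the decoration $\Dec(\bm U_\ell, \bm U_m; \Exc, \bm S^p)$ of the lowest local minimum of $\Exc$ between $\bm U_\ell$ and $\bm U_m$, which is precisely the rule by which $\bm W^p(\bm U_\ell, \bm U_m)$ governs the presence of the edge $\{\ell, m\}$ in $\Sample_k(\bm W^p)$ (Definition~\ref{def:BrownianCographon}); the degenerate cases (a sampled point equal to some $\bm b_i(\Exc)$, or two coinciding sampled points) occur with probability $0$. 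Verifying that the two conventions match — in particular that the parameter $p$ is the same on both sides rather than being turned into $1-p$ by a graph complementation — is a routine check from the definitions, which I would carry out once and then invoke. Combining this identification with the previous paragraph gives $\Sample_k(W_{\bm G_n}) \xrightarrow{d} \Sample_k(\bm W^p)$ for every $k$, and hence $W_{\bm G_n} \xrightarrow{d} \bm W^p$ in the space of graphons.
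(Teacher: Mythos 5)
Your proposal is correct, and it shares the paper's overall architecture: reduce both convergences to convergence of $k$-point samples, and use the (elementary but essential) commutation of pattern extraction with inversion graphs to match the two sampling procedures. Where you genuinely diverge is at the identification of the limit law of the sampled subgraphs with $\Law(\Sample_k(\bm W^p))$. The paper stays discrete: it invokes \cite[Theorem 3.1]{Nous3} to identify the limit of the sampled patterns as $\mathrm{perm}(\bm b_{k,p})$ for a uniform signed plane binary tree $\bm b_{k,p}$, observes that $\inversion(\mathrm{perm}(\bm b_{k,p})) = \Cograph(\tilde{\bm b}_{k,p})$, and then must prove a small combinatorial lemma (uniform plane-unlabeled versus uniform non-plane-labeled binary trees induce the same non-plane unlabeled law, via the factors $2^{k-1}$ and $k!$) before matching with the discrete characterization of $\Sample_k(\bm W^p)$ from \cite[Proposition 4.3]{CographonBrownien}. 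You instead couple $\mathrm{Pattern}_k(\bm\mu^p)$ and $\Sample_k(\bm W^p)$ directly on a single decorated excursion $(\Exc,\bm S^p)$ with common uniform points, using that ``$\{\ell,m\}$ is an inversion'' and ``$\{\ell,m\}$ is an edge'' are both read off from $\Dec(\bm U_\ell,\bm U_m;\Exc,\bm S^p)$. This is cleaner in that it avoids the tree-counting lemma entirely, but it carries a hidden cost: it requires the explicit excursion construction of $\bm\mu^p$ (Maazoun's construction, rather than merely the characterization of $\bm\mu^p$ by its pattern densities), and the sign/parameter bookkeeping you defer as a ``routine check'' ($\ominus\leftrightarrow\One$, $\proba(\oplus)=\proba(\Zero)=p$, edge iff decoration $\One$ iff inversion) is exactly the content that the paper's discrete detour makes explicit; it should be carried out rather than only announced, but it does work out. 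Your sketch of why sample convergence for all $k$ implies convergence in the space of graphons is the standard argument and is what the paper outsources to \cite[Theorem 3.8]{CographonBrownien}.
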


\begin{remark}
	It was observed in \cite[End of Section 2]{FinitelyForcible} that $\inversion$ possesses an extension which is a continuous map $\Inversion$ from the space of permutons to the space of graphons. 
	The above proposition implies that the image of $\bm \mu^p$ by $\Inversion$ is $\bm W^p$.
\end{remark}

\begin{proof}
	For every $k \geq 1$, denote $\bm b_{k,p}$ a uniform random plane binary tree with $k$ (unlabeled) leaves, whose internal vertices are decorated with independent signs $\{\oplus,\ominus\}$ such that $\proba(\oplus)=p$.
	Before entering the actual proof, we present a useful link between a separable permutation and an unlabeled cograph constructed from  $\bm b_{k,p}$. 
	
	Following \cite[Definition 2.3]{Nous3}, we may associate with $\bm b_{k,p}$ a separable permutation, denoted $\mathrm{perm}(\bm b_{k,p})$.  
	We do not recall this construction here (for details, see the above reference or the beginning of \cref{Sec:ConstantsSeparablePermutations}), 
	but indicate an important property it enjoys: 
	for $1\leq i<j \leq k$, we have $\mathrm{perm}(\bm b_{k,p})(i) > \mathrm{perm}(\bm b_{k,p})(j)$ 
	if and only if the youngest common ancestor of the $i$-th and $j$-th leaves (in the left-to-right order) of $\bm b_{k,p}$ carries a $\ominus$ sign. 
	
	Similarly, we may also associate with $\bm b_{k,p}$ an unlabeled cograph. 
	We first replace $\ominus$ by $\One$ and $\oplus$ by $\Zero$ in all internal nodes
	and then we forget the plane embedding.
	We denote by $\tilde{\bm b}_{k,p}$ the resulting non-plane and unlabeled decorated tree.
	With this tree, we associate an unlabeled cograph $\Cograph(\tilde{\bm b}_{k,p})$ as follows:
	its vertices correspond to the leaves of $\tilde{\bm b}_{k,p}$, and there is an edge between the vertices corresponding to leaves $\ell$ and $\ell'$ 
	if and only if the youngest common ancestor of $\ell$ and $\ell'$ carries the decoration $\One$.
        An alternative recursive presentation of this construction,
	making it clear that the constructed graph is indeed a cograph,
	is given at the beginning of \cref{sec:EstimationDrmota}.
	
	By construction, the equality $\inversion(\mathrm{perm}(\bm b_{k,p})) = \Cograph(\tilde{\bm b}_{k,p})$ 
	of unlabeled graphs holds.
	%\medskip
	
	Denote $\bm \sigma_{n,k}$ a uniform random pattern of size $k$ in $\bm \sigma_n$.
	Theorem 3.1 and Definition 3.5 in \cite{Nous3} imply that $\bm \sigma_{n,k}$ converges in distribution to the random separable permutation $\mathrm{perm}(\bm b_{k,p})$. 
	As this is a convergence in distribution in the discrete space consisting of all permutations of size $k$, the map $\inv$ is continuous, and 
	we obtain the following convergence of unlabeled graphs:  
	\begin{equation}
	\label{eq:inv_si_nk}
	\inversion(\bm \sigma_{n,k})\xrightarrow[n\to\infty]{d} \inversion(\mathrm{perm}(\bm b_{k,p})).
	\end{equation}
	
	It is easy to check that the actions of taking patterns (resp. induced subgraphs) 
	and of computing inversion graphs commute. 
	Namely, for a permutation $\sigma$ and a subset $I$ of its indices, 
	the inversion graph of the pattern of $\sigma$ induced by $I$ 
	is the subgraph of the inversion graph of $\sigma$ induced by the vertices corresponding to $I$.
	Therefore, $\inversion(\bm \sigma_{n,k})$ -- which appears on the left-hand-side of \cref{eq:inv_si_nk} -- has the same distribution as 
	the subgraph induced by a uniform random subset of $k$ distinct vertices of $\bm G_n$.
	
	On the right-hand side of \cref{eq:inv_si_nk}, we have already identified $\inversion(\mathrm{perm}(\bm b_{k,p}))$ as $\Cograph(\tilde{\bm b}_{k,p})$.
	We recall that $\tilde{\bm b}_{k,p}$ is the non-plane version
	of a uniform random (unlabeled) plane binary tree with independent decorations on its internal nodes.
	We claim that this has the same distribution as the unlabeled version
	of a uniform random labeled non-plane binary tree,
	with the same rule for  decorations of the internal nodes
	(which we denote $\bm b^{\not P,L}_{k,p}$).
	Admitting this claim for the moment, and comparing with 
	\cite[Proposition 4.3]{CographonBrownien},
	we get that the right-hand side of \cref{eq:inv_si_nk}
	is distributed as $\Sample_k( \bm W^p)$.
	
	With these considerations in hand, we can use \cite[Theorem 3.8]{CographonBrownien}
	(more precisely the implication $(d) \Rightarrow (a)$ and Eq.\,(4) following this theorem) and conclude from  \cref{eq:inv_si_nk}
	that $W_{\bm G_n} \xrightarrow[n\to\infty]{d} \bm W^p$.
	This ends the proof of the proposition, up to the above claim.
	%\medskip
	
	It remains to prove that $\tilde{\bm b}_{k,p} \stackrel{d}{=} \bm b^{\not P,L}_{k,p}$, as non-plane unlabeled trees.
	Since the rule for the random decorations are the same on both sides, we disregard decorations, 
	and denote the underlying undecorated random trees $\tilde{\bm b}_{k}$ and $\bm b^{\not P,L}_{k}$ respectively.
	To prove that $\tilde{\bm b}_{k}\stackrel{d}{=}\bm b^{\not P,L}_{k}$, 
	we compare both distributions with that of a uniform labeled plane binary tree with $k$ leaves $\bm b^{P,L}_{k}$.
	Since every non-plane labeled binary tree with $k$ leaves can be embedded in the plane in $2^{k-1}$ ways,
	we have $\bm b^{\not P,L}_{k} \stackrel{d}{=} \bm b^{P,L}_{k}$ 
	as non-plane unlabeled trees
	(there are no symmetry problems, since trees are labeled).
	On the other hand, since every plane unlabeled binary tree with $k$ leaves can be labeled in $k!$ ways,
	we have $\tilde{\bm b}_{k} \stackrel{d}{=} \bm b^{P,L}_{k}$
	as non-plane unlabeled trees
	(again, there are no symmetry problems, since trees are plane).
	We conclude that $\tilde{\bm b}_{k,p} \stackrel{d}{=} \bm b^{\not P,L}_{k,p}$,
	as wanted.
\end{proof}
    
  \section{Proof of the sublinearity results through self-similarity}\label{sec:Inequation}

The main part of the proof of our sublinearity results (\cref{thm:graphes_universel,thm:permutations_universel})
is done in the continuous world,
proving that the independence number $\ind(\bm W^{p})$ of the  Brownian cographon is almost surely equal to $0$.
To this end, we first show 
that the distribution of $\ind(\bm W^{p})$ 
is solution of a specific inequation 
-- this is \cref{prop:InequationSatisfied}. 
Next, we prove that the only solution of this inequation is the Dirac distribution $\delta_0$ -- this is \cref{prop:SolvingTheInequation}. 
All results are gathered in \cref{ssec:completing_proof},
completing the proofs of \cref{thm:graphes_universel,thm:permutations_universel}.
  
    \subsection{An inequation in distribution}
  \label{ssec:autosimilarite}
 We use the standard {\em stochastic domination order}
  between real distributions $\mu$ and $\nu$.
  Namely, we write $\mu \stochleq \nu$
   if $\mu([x,+\infty)) \leq \nu([x,+\infty))$ for every real $x$.
By Strassen's Theorem, this is equivalent to the fact that we can find ${\bm Z_1}$ and ${\bm Z_2}$ 
  defined on the same probability space 
  with distributions $\mu$ and $\nu$ respectively,
  such that ${\bm Z_1} \le {\bm Z_2}$ almost surely. 
  %\medskip
  
  Our goal is now to show that the distribution of the random variable $\ind(\bm W^{p})$
  is stochastically dominated by another distribution
   defined using some independent copies of $\ind(\bm W^{p})$
   (we refer to such a relation as an {\em inequation in distribution}\footnote{We
    use the term {\em inequation} and not inequality,
    because the upper bound also involves the distribution of $\ind(\bm W^{p})$.}).
   To this end, 
   we use Aldous' decomposition of a Brownian excursion
   into three independent excursions
   (see \cref{fig:DecompoAldous}).
   This decomposition has an immediate counterpart, where we decompose
   a Brownian cographon into three independent Brownian cographons.
   We then look closely at the behavior of the functional $\ind$
   along this decomposition.
   
   We introduce the notation needed to state this inequation in distribution.
  For a random variable ${\bm Y}$, let us denote by $\Law({\bm Y})$ its distribution.
  Recall that, for positive real numbers $\alpha_1, \dots, \alpha_k$, 
  the Dirichlet distribution $\Dirichlet(\alpha_1,\dots,\alpha_k)$
  is a probability measure on the simplex $\{(x_1,\cdots,x_k): x_1+\cdots+x_k=1, x_i \geq 0 \text{ for all } i\}$:
  by definition it has density proportional to $\prod_{i\le k} x_i^{\alpha_i}$ with respect to the Lebesgue measure.

   Let $\mu$ be a probability distribution on $[0,1]$ and $p$ a parameter in $[0,1]$.
   We define the following random variables:
    \begin{itemize}
      \item $({\bm \Delta_0},{\bm \Delta_1},{\bm \Delta_2})$ is a random vector in $[0,1]^3$
        with distribution $\Dirichlet(1/2,1/2,1/2)$;
      \item $\bm X^\mu_0$, $\bm X^\mu_1$ and $\bm X^\mu_2$ are three independent random variables
             with distribution $\mu$, and independent from $({\bm \Delta_0},{\bm \Delta_1},{\bm \Delta_2})$;
           \item $ \bm B$ is a Bernoulli$(1-p)$ random variable, independent from $({\bm \Delta_0},{\bm \Delta_1},{\bm \Delta_2},\bm X^\mu_0,\bm X^\mu_1,\bm X^\mu_2)$;
      \item finally, we set 
    \begin{align}
      %\nonumber
      {\bm Y^\mu_0}&= {\bm \Delta_0} \, \bm X^\mu_0 + {\bm \Delta_1} \,  \bm X^\mu_1 + {\bm \Delta_2} \, \bm X^\mu_2 \label{eq:DefY0}\\
      {\bm Y^\mu_1}&= {\bm \Delta_0} \, \bm X^\mu_0 + \max({\bm \Delta_1} \,  \bm X^\mu_1, {\bm \Delta_2} \, \bm X^\mu_2)
      \label{eq:DefY1}\\
      {\bm Y^\mu_{(p)}}&=  \bm B {\bm Y_1^{\mu}} + (1- \bm B) {\bm Y_0^{\mu}} \label{eq:DefY} \\
      \Phi_p(\mu)&= \Law({\bm Y^{\mu}_{(p)}})
      \nonumber
    \end{align}
    \end{itemize}

    Then the inequation we are interested in is 
    \begin{equation}
      \mu \, \stochleq\,  \Phi_p(\mu).
      \label{eq:TheDistributionalInequality}
    \end{equation}

  \begin{proposition}
    \label{prop:InequationSatisfied}
    For $p\in [0,1]$, the distribution $\mu$ of $\ind(\bm W^{p})$ satisfies the inequation \eqref{eq:TheDistributionalInequality}.
  \end{proposition}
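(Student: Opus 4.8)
The plan is to build, out of the explicit construction of $\bm{W}^p$ from a decorated Brownian excursion, a coupling in which the ingredients of \eqref{eq:DefY} — a vector $(\bm{\Delta}_0,\bm{\Delta}_1,\bm{\Delta}_2)\sim\Dirichlet(1/2,1/2,1/2)$, independent copies $\bm{X}^\mu_0,\bm{X}^\mu_1,\bm{X}^\mu_2$ of $\mu$, and an independent $\bm{B}\sim\mathrm{Bernoulli}(1-p)$ — all appear with the prescribed joint law, and in which $\ind(\bm{W}^p)\le\bm{Y}^\mu_{(p)}$ holds almost surely; since $\bm{Y}^\mu_{(p)}$ then has law $\Phi_p(\mu)$, this gives $\proba(\ind(\bm{W}^p)\ge x)\le\proba(\bm{Y}^\mu_{(p)}\ge x)$ for every $x$, which is exactly \eqref{eq:TheDistributionalInequality}. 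To this end, realize $\bm{W}^p$ from a decorated Brownian excursion $(\Exc,\bm{S}^p)$ as in \cref{def:BrownianCographon}, sample two i.i.d.\ uniform points $\bm{U}_1<\bm{U}_2$ of $[0,1]$ independent of $(\Exc,\bm{S}^p)$, and let $\bm{b}^\star$ be the point at which $\Exc$ attains its minimum $m$ on $[\bm{U}_1,\bm{U}_2]$. Almost surely $\bm{b}^\star\in(\bm{U}_1,\bm{U}_2)$ is a strict local minimum of $\Exc$, hence carries a decoration $\bm{s}^\star\in\{0,1\}$; since $\bm{S}^p$ is i.i.d.\ and independent of $(\Exc,\bm{U}_1,\bm{U}_2)$, the variable $\bm{s}^\star$ is Bernoulli with $\proba(\bm{s}^\star=0)=p$ and is independent of all the data introduced below. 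With $g=\sup\{t<\bm{b}^\star:\Exc(t)\le m\}$ and $d=\inf\{t>\bm{b}^\star:\Exc(t)\le m\}$, one checks routinely that a.s.\ $0<g<\bm{U}_1<\bm{b}^\star<\bm{U}_2<d<1$ and $\Exc(g)=\Exc(d)=m$, so the sets $I_1=[g,\bm{b}^\star]$, $I_2=[\bm{b}^\star,d]$ and $I_0=[0,g]\cup[d,1]$ partition $[0,1]$ up to a finite set; put $\bm{\Delta}_i=|I_i|$. Aldous' decomposition of a Brownian excursion at two uniform points \cite{Aldous94} then asserts that $(\bm{\Delta}_0,\bm{\Delta}_1,\bm{\Delta}_2)\sim\Dirichlet(1/2,1/2,1/2)$ and that, conditionally on the $\bm{\Delta}_i$'s, the sub-excursions of $\Exc$ carried by $I_1$, by $I_2$, and by $I_0$ (the latter obtained by concatenating the restrictions of $\Exc$ to $[0,g]$ and to $[d,1]$ at their common height $m$), once Brownian-rescaled, are three independent standard Brownian excursions.

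Carrying along the decorations — they split into three i.i.d.\ Bernoulli families indexed respectively by the local minima of $\Exc$ lying in $I_1$, in $I_2$ and in $I_0$, mutually independent and independent of $(\bm{\Delta}_0,\bm{\Delta}_1,\bm{\Delta}_2)$ and of $\bm{s}^\star$ — one obtains the corresponding decomposition of the random graphon: for each $i\in\{0,1,2\}$, the restriction of $\bm{W}^p$ to $I_i\times I_i$, transported to $[0,1]^2$ by a measure-preserving bijection $I_i\to[0,1]$, is a Brownian cographon $\bm{W}^{p,(i)}$ of parameter $p$, and $\bm{W}^{p,(0)},\bm{W}^{p,(1)},\bm{W}^{p,(2)}$ are mutually independent and independent of $(\bm{\Delta}_0,\bm{\Delta}_1,\bm{\Delta}_2)$ and of $\bm{s}^\star$. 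The only property of the off-diagonal part we need is that $\bm{W}^p(x,y)=\bm{s}^\star$ for all $x\in I_1$, $y\in I_2$: indeed $[x,y]$ then contains $\bm{b}^\star$, while $\Exc\ge m$ on $[g,d]\supseteq[x,y]$ with the minimum attained only at $\bm{b}^\star$, so $\Dec(x,y;\Exc,\bm{S}^p)=\bm{s}^\star$. Proving these identifications amounts to unwinding the definition of $\Dec$ and checking that concatenating $\Exc$ on $[0,g]$ with $\Exc$ on $[d,1]$ at height $m$ neither creates nor destroys a strict local minimum; since $\ind$ is a measurable function on the space of graphons (see the remark after \cref{prop:SemiContinuity}), all of this only needs to hold up to weak equivalence.

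It remains to combine the pieces. Put $\bm{X}^\mu_i:=\ind(\bm{W}^{p,(i)})$, which has law $\mu$ since $\bm{W}^{p,(i)}\stackrel{d}{=}\bm{W}^p$, and $\bm{B}:=\mathds{1}_{\{\bm{s}^\star=1\}}$; by the above, $(\bm{\Delta}_0,\bm{\Delta}_1,\bm{\Delta}_2,\bm{X}^\mu_0,\bm{X}^\mu_1,\bm{X}^\mu_2,\bm{B})$ has exactly the joint distribution specified just before \eqref{eq:DefY}, with $\bm{B}\sim\mathrm{Bernoulli}(1-p)$. Let $I\subseteq[0,1]$ be an independent set of $\bm{W}^p$. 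Each $I\cap I_i$ is an independent set of the restriction of $\bm{W}^p$ to $I_i\times I_i$, hence, after transport, of $\bm{W}^{p,(i)}$, so $\Leb(I\cap I_i)\le\bm{\Delta}_i\,\bm{X}^\mu_i$; summing over $i$ and taking the supremum over $I$ gives $\ind(\bm{W}^p)\le\bm{\Delta}_0\bm{X}^\mu_0+\bm{\Delta}_1\bm{X}^\mu_1+\bm{\Delta}_2\bm{X}^\mu_2=\bm{Y}^\mu_0$ almost surely. On the event $\{\bm{s}^\star=1\}$ we moreover have $\bm{W}^p\equiv1$ on $I_1\times I_2$, so $I$ cannot meet both $I_1$ and $I_2$ in positive measure; hence $\Leb(I\cap I_1)+\Leb(I\cap I_2)\le\max(\bm{\Delta}_1\bm{X}^\mu_1,\bm{\Delta}_2\bm{X}^\mu_2)$, and adding $\Leb(I\cap I_0)\le\bm{\Delta}_0\bm{X}^\mu_0$ and passing to the supremum yields $\ind(\bm{W}^p)\le\bm{Y}^\mu_1$ a.s.\ on that event. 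The two bounds combine into $\ind(\bm{W}^p)\le\bm{B}\,\bm{Y}^\mu_1+(1-\bm{B})\,\bm{Y}^\mu_0=\bm{Y}^\mu_{(p)}$ almost surely, hence $\mu=\Law(\ind(\bm{W}^p))\stochleq\Law(\bm{Y}^\mu_{(p)})=\Phi_p(\mu)$, i.e.\ \eqref{eq:TheDistributionalInequality}.

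The last paragraph is routine: it is nothing but subadditivity of the Lebesgue measure, together with the observation that a join forbids positive mass on both of its sides. The main obstacle lies in the first two paragraphs, namely promoting Aldous' excursion decomposition to an honest decomposition of the random graphon $\bm{W}^p$ — verifying that the three diagonal blocks are, after rescaling, independent Brownian cographons of parameter $p$ (which requires keeping track of how the i.i.d.\ decorations are distributed among the three pieces and ruling out a spurious local minimum at the concatenation point) and that the off-diagonal block between $I_1$ and $I_2$ is the single constant $\bm{s}^\star$ — all of it understood modulo measure-preserving rearrangements, since only the weak-equivalence class of $\bm{W}^p$ is canonical and $\ind$ factors through it.
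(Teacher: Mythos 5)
Your proposal is correct and follows essentially the same route as the paper's proof: both apply Aldous' decomposition of the excursion at the minimum over $[\bm U_1,\bm U_2]$, promote it to a decomposition of $\bm W^p$ into three independent Brownian cographons independent of the Dirichlet$(1/2,1/2,1/2)$ masses and of the Bernoulli$(1-p)$ sign at the separating local minimum, and then bound $\Leb(\bm I)$ blockwise, replacing the sum over the two inner blocks by a maximum when that sign is $1$. The only (cosmetic) difference is that the paper makes the three rescaling maps $\eta_k$ explicit and cites Maazoun's observations for the independence of the split decoration sequences, whereas you phrase the transport via measure-preserving bijections modulo weak equivalence; the substance is identical.
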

  
  	\begin{figure}[t]
  	\begin{center}
  	\includegraphics[width=12cm]{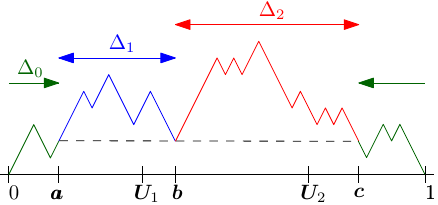}
  	\caption{A stylized version of a Brownian excursion and the corresponding $\bm a,\bm b,\bm c,\bm \Delta_0,\bm \Delta_1,\bm \Delta_2$.}
  	\label{fig:DecompoAldous}
  	\end{center}
  	\end{figure}

    \begin{proof}
	Fix $p\in [0,1]$ and let $(\Exc,\bm S^p)$ be a decorated Brownian excursion.  
	Let $\bm U_1< \bm U_2$ be a reordered pair of independent and uniform random variables on $[0,1]$, chosen independently from $(\Exc,\bm S^p)$. 
	Almost surely, the function $\Exc$ reaches its minimum on $[\bm U_1, \bm U_2]$ exactly once, and at a local minimum.
	Let us denote by $\bm s$ the sign of this local minimum in $\bm S^p$. 
	Let $\bm b$ be the position where this local minimum is reached (see \cref{fig:DecompoAldous}). Let also 
  	\[ \bm a = \max\{t\leq \bm U_1, \Exc(t) = \Exc( \bm b)\},\quad
  	 \bm c = \min\{t\geq \bm U_2, \Exc(t) = \Exc( \bm b)\}.
  	\]
  	Set 
  	\begin{equation}\label{eq:definitiondeltas}
  	\bm \Delta_0 = 1-\bm c +\bm a,\quad \bm \Delta_1 = \bm b-\bm a,\quad \bm \Delta_2 = \bm c - \bm b,\quad
  	\bm X_0 =\frac  {\bm a}{\bm \Delta_0},  \quad \bm B = \bm s.
  	\end{equation}
 
  	We may now cut the excursion $\Exc$ into three excursions, in the manner prescribed by Aldous \cite{Aldous94}.
  	
  	\begin{equation}\begin{aligned}\label{eq:definitiontheta}
  	 \eta_0(x) &=  \bm\Delta_0 \,x + (1-\bm\Delta_0)\mathbf 1_{[x>\bm X_0]},\ x\in[0,1], \quad
  	 &\Exc_0 &= \frac 1 {\sqrt{\bm \Delta_0}} \, \Exc \circ \eta_0\\
  	 \eta_1(x) & =  \bm a + \bm\Delta_1\,x,\ \ x\in[0,1],\quad
  	 &\Exc_1 &= \frac 1 {\sqrt{\bm \Delta_1}}(\Exc \circ \eta_1 - \Exc(\bm b)        )   \\
  	\eta_2(x) & = \bm b +\bm\Delta_2\,x,\ \ x\in[0,1], \quad
  	&\Exc_2 &= \frac 1 {\sqrt{\bm \Delta_2}}(\Exc \circ \eta_2 - \Exc(\bm b))
  	\end{aligned}\end{equation}
  	Then  \cite[Corollary 3]{Aldous94} states that the random functions $\Exc_0,\Exc_1,\Exc_2$ are three independent Brownian excursions, independent from the vector $(\bm \Delta_0,\bm \Delta_1,\bm \Delta_2)$;
	moreover, the latter has distribution $\Dirichlet(1/2,1/2,1/2)$.

  	In addition the piecewise affine maps $(\eta_i)_{0\leq i \leq 2}$ naturally put the local minima of $\Exc$ (except the one at $t=\bm b$) in bijection with the disjoint union of the local minima of $\Exc_0$, $\Exc_1$ and $\Exc_2$. 
  	(Indeed, almost surely, $\Exc$ does not have a local minimum at $t= \bm a$ nor at $t = \bm c$.)
  	In particular, this implies (as shown in the proof of Theorem 1.6 in \cite{MickaelConstruction}, see in particular Observations 5.2 and 5.3 there) that
        \begin{itemize}
        \item $\bm B$ is a Bernoulli($1-p$) random variable,
        \item there exist three independent i.i.d. sequences of Bernoulli($1-p$) random variables \linebreak $\bm S^p_0,\bm S^p_1,\bm S^p_2$ such that for $0\leq x<y\leq 1$ and $k\in \{0,1,2\}$, %ok
  	\begin{equation}  \label{eq:DecSubExc}
  	\Dec(x,y;\Exc_k,\bm S^p_k) = \Dec(\eta_k(x),\eta_k(y);\Exc,\bm S^p),
  	\end{equation}
      \item the random variables $\Exc_0,\Exc_1,\Exc_2,(\bm \Delta_0,\bm \Delta_1,\bm \Delta_2),\bm B$ and the sequences $\bm S^p_0,\bm S^p_1,\bm S^p_2$ are all independent.
        \end{itemize}
        
	Now, let $\bm W^p$ be the Brownian cographon (of parameter $p$) associated with $(\Exc,\bm S^p)$ (see \cref{def:BrownianCographon}). 
	Similarly, for $k\in \{0,1,2\}$, consider $\bm W_k^p$ the Brownian cographon (also of parameter~$p$)
	 constructed from $(\Exc_k,\bm S^p_k)$, {\emph i.e.}  $$
	\begin{array}{ r c c l}
	\bm W^p_k: & [0,1]^2 &\to& \{0,1\}\\
	& (x,y) & \mapsto & \Dec(x,y;\Exc_k,\bm S^p_k).
	\end{array}
	$$
	These three random graphons form a triple of i.i.d. random graphons, independent from the random variables $\bm B$ and $(\bm \Delta_0,\bm \Delta_1,\bm \Delta_2)$.
	
	Let $\bm I$ be an independent set of $\bm W^p$. 
	For each $k=0,1,2$, denote by $\bm A_k$ the image of $[0,1]$ by $\eta_k$, namely, $\bm A_0 = [0,\bm a] \cup [\bm c,1]$, $\bm A_1 = [\bm a, \bm b]$ and $\bm A_2 = [\bm b, \bm c]$. 
	Define $\bm I_k$ as follows: 
	\begin{equation*}
	\bm I_k =\eta_k^{-1} (\bm I \cap \bm A_k) \ ,\ \ k\in \{0,1,2\}.
	\end{equation*}
	Since the images of the affine injective maps $(\eta_k)_{k \in \{0,1,2\}}$
	 partition $[0,1]$ up to measure-negligible overlaps,
	\begin{equation}\label{eq:LebSubExc}
	\Leb(\bm I) = \bm \Delta_0 \Leb(\bm I_0) + \bm \Delta_1 \Leb(\bm I_1) + \bm \Delta_2 \Leb(\bm I_2).
	\end{equation}
	Since $\bm I$ is an independent set of $\bm W^p$,
	\cref{eq:DecSubExc} implies that $\bm I_k$ is an independent set of $\bm W_k^p$ for every $k\in \{0,1,2\}$.
	In particular, $ \Leb(\bm I_k) \leq \ind(\bm W_k^p).$
	Moreover, we notice that if $\bm B = 1$, then either $\Leb(\bm I_1) = 0$ or $\Leb(\bm I_2) = 0$ (by definition of independent set in a graphon).
        Together with \cref{eq:LebSubExc}, we deduce
	\begin{equation*}
	\Leb(\bm I) \leq  \bm \Delta_0\ind(\bm W_0^p) +\bm B \max\Big(\bm \Delta_1\ind(\bm W_1^p),\bm \Delta_2\ind(\bm W_2^p)\Big) + (1-\bm{B})\Big(\bm \Delta_1\ind(\bm W_1^p)+\bm \Delta_2\ind(\bm W_2^p)\Big).
	\end{equation*}
	From \cref{eq:DefInd},
        taking the supremum over independent sets $\bm I$ of $\ind(\bm W^p)$, one obtains the following a.s. inequality
	\begin{equation*}
	\ind(\bm W^p)\leq  \bm \Delta_0\ind(\bm W_0^p) +\bm B \max\Big(\bm \Delta_1\ind(\bm W_1^p),\bm \Delta_2\ind(\bm W_2^p)\Big) + (1-\bm{B})\Big(\bm \Delta_1\ind(\bm W_1^p)+\bm \Delta_2\ind(\bm W_2^p)\Big).
	\end{equation*}
	Since $\ind(\bm W_k^p)$ has the same distribution as $\ind(\bm W^p)$ for $k\in \{0,1,2\}$,
	and the three are independent,
	the right-hand-side is a random variable distributed as ${\bm Y_{(p)}^{\Law(\ind(\bm W^p))}}$,
	proving that $\Law(\ind(\bm W^p))$ satisfies \cref{eq:TheDistributionalInequality}.	
	\end{proof}

  \subsection{Solving the inequation}
  \label{ssec:FixedPoint}

  \begin{proposition}
    \label{prop:SolvingTheInequation}
    For $p$ in $[0,1)$, the Dirac distribution $\mu=\delta_0$ is the only probability distribution on $[0,1]$
    solution of the inequation \eqref{eq:TheDistributionalInequality}.
  \end{proposition}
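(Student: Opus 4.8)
The plan is to collapse the distributional inequation \eqref{eq:TheDistributionalInequality} to a one-dimensional statement about the mean $m := \int_{[0,1]} x\,\mathrm{d}\mu(x) \in [0,1]$ of $\mu$. Before that, note that $\delta_0$ is itself a solution: if $\mu=\delta_0$ then $\bm X^\mu_0=\bm X^\mu_1=\bm X^\mu_2=0$ almost surely, hence $\bm Y^\mu_0=\bm Y^\mu_1=\bm Y^\mu_{(p)}=0$ almost surely and $\Phi_p(\delta_0)=\delta_0\stochleq\delta_0$. So it remains to show that any solution $\mu$ must equal $\delta_0$. The mechanism I would exploit is that $\Phi_p$ never increases the mean, and strictly decreases it unless $\mu=\delta_0$, whereas the hypothesis $\mu\stochleq\Phi_p(\mu)$ forbids the mean from decreasing.

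Concretely: a stochastic domination $\mu\stochleq\nu$ between probability measures on $[0,1]$ implies $\int x\,\mathrm{d}\mu\le\int x\,\mathrm{d}\nu$ (integrate the inequality $\mu([x,+\infty))\le\nu([x,+\infty))$ over $x\in[0,1]$), so the hypothesis gives $m\le\mathbb{E}[\bm Y^\mu_{(p)}]$. I would then evaluate the right-hand side from \eqref{eq:DefY}. Since $(\bm\Delta_0,\bm\Delta_1,\bm\Delta_2)$ is independent of $(\bm X^\mu_0,\bm X^\mu_1,\bm X^\mu_2)$ and each $\bm\Delta_i$ has mean $1/3$ by symmetry of $\Dirichlet(1/2,1/2,1/2)$, one gets $\mathbb{E}[\bm Y^\mu_0]=m$; and writing $\max(a,b)=a+b-\min(a,b)$,
\[
\mathbb{E}[\bm Y^\mu_1]=m-\mathbb{E}\big[\min(\bm\Delta_1\bm X^\mu_1,\ \bm\Delta_2\bm X^\mu_2)\big].
\]
Since $\bm\Delta_1,\bm\Delta_2>0$ almost surely and, when $\mu\neq\delta_0$, $\mathbb{P}(\bm X^\mu_i>0)=1-\mu(\{0\})>0$, the event that $\bm\Delta_1\bm X^\mu_1>0$ and $\bm\Delta_2\bm X^\mu_2>0$ has positive probability (by independence of $\bm X^\mu_1$ and $\bm X^\mu_2$), so the subtracted expectation is strictly positive and $\mathbb{E}[\bm Y^\mu_1]<m$. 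Finally $\bm B$ is independent of $(\bm Y^\mu_0,\bm Y^\mu_1)$ and equals $1$ with probability $1-p>0$, whence
\[
\mathbb{E}[\bm Y^\mu_{(p)}]=(1-p)\,\mathbb{E}[\bm Y^\mu_1]+p\,\mathbb{E}[\bm Y^\mu_0]<(1-p)\,m+p\,m=m,
\]
contradicting $m\le\mathbb{E}[\bm Y^\mu_{(p)}]$. Hence $\mu=\delta_0$, which together with the first observation gives uniqueness.

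I expect the only genuine idea here to be the choice of the monotone statistic: the mean is conserved exactly along the ``$\bm\Delta_0\bm X_0+\bm\Delta_1\bm X_1+\bm\Delta_2\bm X_2$'' branch and strictly contracted along the ``$\max$'' branch, which keeps the argument elementary and sidesteps any fixed-point iteration or topology on the space of distributions. The two spots requiring a little care are getting the direction of the mean inequality right from the definition of $\stochleq$, and the strict positivity of $\mathbb{E}[\min(\bm\Delta_1\bm X^\mu_1,\bm\Delta_2\bm X^\mu_2)]$, which is precisely where the hypothesis $\mu\neq\delta_0$ enters; one should also keep in mind that $p<1$ is genuinely needed, since for $p=1$ one has $\Phi_1(\delta_1)=\delta_1$ as well (consistently with $\ind(\bm W^1)=1$, the Brownian cographon of parameter $1$ being edgeless).
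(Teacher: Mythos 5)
Your proof is correct, and it takes a genuinely different and more elementary route than the paper. The paper first observes that $\Phi_p$ is monotone for $\stochleq$, so that a solution satisfies $\mu \stochleq \Phi_p^k(\mu)$ for all $k$, and then proves that $\Phi_p$ is a weak contraction for the Wasserstein distance on the compact space $\mathcal M_1([0,1])$; a Banach--Edelstein fixed-point argument then forces $\Phi_p^k(\mu)\to\delta_0$, hence $\mu=\delta_0$. You instead apply the first-moment functional once: stochastic domination on $[0,1]$ orders the means, $\ex[\bm Y^\mu_0]=m$ exactly because $\sum_i\bm\Delta_i=1$ and the $\bm\Delta_i$'s are independent of the $\bm X^\mu_i$'s, while $\ex[\bm Y^\mu_1]=m-\ex[\min(\bm\Delta_1\bm X^\mu_1,\bm\Delta_2\bm X^\mu_2)]<m$ whenever $\mu\neq\delta_0$ (using that $\bm\Delta_1,\bm\Delta_2>0$ a.s.\ and that the two events $\{\bm X^\mu_i>0\}$ are independent with positive probability), so that $p<1$ yields $\ex[\bm Y^\mu_{(p)}]<m$, a contradiction. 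All steps check out, including the direction of the mean inequality and the strict positivity of the subtracted term; your remark that the argument degenerates at $p=1$ (where $\delta_1$ is also a fixed point) correctly identifies where the hypothesis $p<1$ enters, just as it does in the paper's contraction lemma. Your approach avoids the optimal-transport machinery and the iteration entirely, at the cost of being tailored to this particular inequation; the paper's contraction lemma is a stronger structural statement about $\Phi_p$ (uniqueness of the fixed point and convergence of iterates from any starting measure), but for the proposition as stated your one-line moment computation suffices.
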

  We start by stating and proving a key lemma.
    Recall the definition of $\Phi_p(\mu)$ from \eqref{eq:TheDistributionalInequality}.
    The map $\Phi_p$ is a functional from the space $\mathcal M_1([0,1])$ of probability
    distributions on $[0,1]$.
    The space $\mathcal M_1([0,1])$ can be endowed with the so-called {\em Wasserstein distance} 
    (also called optimal cost distance, or Kantorovich--Rubinstein distance):
    \[ d_W(\nu,\nu') = \inf_{\bm X,\bm X': \bm X \sim \nu, {\bm X'} \sim \nu'} \ex[|\bm X- {\bm X'}|],\]
    where the infimum is taken over all pairs $(\bm X, {\bm X'})$ of random variables defined on the same probability space
    with distributions $\nu$ and $\nu'$, respectively.
    We will use below the fact that this infimum is reached (for an explicit expression of the minimizing coupling see e.g. Remark 2.30 in \cite{Peyre}).

    Furthermore since we are working on a compact space,
    convergence for $d_W$ is equivalent to weak convergence of measures (see \cite[Sec.6]{Villani}).
    \begin{lemma}
      For $p\in [0,1)$, the map $\Phi_p$ is a weak contraction for $d_W$,
       {\emph i.e.} for measures $\mu$ and~$\nu$ in $\mathcal M_1([0,1])$
      with $\mu \ne \nu$, we have $d_W(\Phi_p(\mu),\Phi_p(\nu)) < d_W(\mu,\nu)$.
      \label{lem:contraction}
    \end{lemma}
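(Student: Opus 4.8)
The plan is to set up an optimal coupling for $d_W(\mu,\nu)$ and push it through the construction of $\Phi_p$ to get a coupling of $\Phi_p(\mu)$ and $\Phi_p(\nu)$ whose expected distance is strictly smaller. Concretely, let $(\bm X, \bm X')$ be a pair of random variables on a common probability space with $\bm X \sim \mu$, $\bm X' \sim \nu$, achieving $\ex[|\bm X - \bm X'|] = d_W(\mu,\nu)$ (the infimum is attained, as recalled just before the lemma). Take three i.i.d. copies $(\bm X_0,\bm X_0')$, $(\bm X_1,\bm X_1')$, $(\bm X_2,\bm X_2')$ of this coupled pair, together with an independent Dirichlet vector $({\bm\Delta_0},{\bm\Delta_1},{\bm\Delta_2}) \sim \Dirichlet(1/2,1/2,1/2)$ and an independent Bernoulli$(1-p)$ variable $\bm B$. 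Using these common sources of randomness, define $\bm Y_{(p)}$ from the $\bm X_i$'s via \eqref{eq:DefY}, and $\bm Y'_{(p)}$ from the $\bm X_i'$'s in the same way; then $\bm Y_{(p)} \sim \Phi_p(\mu)$ and $\bm Y'_{(p)} \sim \Phi_p(\nu)$, so this is an admissible coupling and $d_W(\Phi_p(\mu),\Phi_p(\nu)) \le \ex[|\bm Y_{(p)} - \bm Y'_{(p)}|]$.

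Next I would bound $|\bm Y_{(p)} - \bm Y'_{(p)}|$ pointwise. On the event $\{\bm B = 0\}$ one has $\bm Y_{(p)} - \bm Y'_{(p)} = \sum_{k=0}^{2} {\bm\Delta_k}(\bm X_k - \bm X_k')$, so the triangle inequality gives $|\bm Y_{(p)} - \bm Y'_{(p)}| \le \sum_{k=0}^2 {\bm\Delta_k}|\bm X_k - \bm X_k'|$. On the event $\{\bm B = 1\}$ one has $\bm Y_{(p)} - \bm Y'_{(p)} = {\bm\Delta_0}(\bm X_0 - \bm X_0') + \big(\max({\bm\Delta_1}\bm X_1,{\bm\Delta_2}\bm X_2) - \max({\bm\Delta_1}\bm X_1',{\bm\Delta_2}\bm X_2')\big)$, and since $|\max(a,b) - \max(a',b')| \le \max(|a-a'|,|b-b'|) \le |a-a'| + |b-b'|$, again $|\bm Y_{(p)} - \bm Y'_{(p)}| \le \sum_{k=0}^2 {\bm\Delta_k}|\bm X_k - \bm X_k'|$. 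Taking expectations and using independence of $({\bm\Delta_0},{\bm\Delta_1},{\bm\Delta_2})$ from the $(\bm X_k,\bm X_k')$, together with $\ex[{\bm\Delta_0}] = \ex[{\bm\Delta_1}] = \ex[{\bm\Delta_2}] = 1/3$ and $\ex[|\bm X_k - \bm X_k'|] = d_W(\mu,\nu)$, yields $\ex[|\bm Y_{(p)} - \bm Y'_{(p)}|] \le d_W(\mu,\nu)$. This already gives that $\Phi_p$ is $1$-Lipschitz; the work is to make the inequality strict when $\mu \ne \nu$.

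For strictness, I would argue that at least one of the above pointwise inequalities is strict on an event of positive probability. If $\mu \ne \nu$, then $d_W(\mu,\nu) > 0$, so $\ex[|\bm X_0 - \bm X_0'|] > 0$ and in particular $\pr(\bm X_0 \ne \bm X_0') > 0$. Now consider the event $E = \{\bm B = 1\} \cap \{\bm X_1 = \bm X_1'\} \cap \{\bm X_2 = \bm X_2'\} \cap \{\bm X_0 \ne \bm X_0'\}$; wait — I should be more careful, because $\pr(\bm X_1 = \bm X_1')$ could be $0$ if $\mu,\nu$ are non-atomic. Instead I would use that on $\{\bm B = 1\}$, $\max({\bm\Delta_1}\bm X_1,{\bm\Delta_2}\bm X_2)$ is strictly less than ${\bm\Delta_1}\bm X_1 + {\bm\Delta_2}\bm X_2$ whenever both ${\bm\Delta_1}\bm X_1 > 0$ and ${\bm\Delta_2}\bm X_2 > 0$, so the bound $|\max(a,b)-\max(a',b')| \le |a-a'|+|b-b'|$ is typically wasteful; more robustly, one checks that on $\{\bm B=1\}$, whenever the three signed differences $\bm X_k - \bm X_k'$ are not all zero and not all of the same sign pattern that makes the $\max$ additive, there is strict inequality, and since $p < 1$ gives $\pr(\bm B = 1) = 1-p > 0$ and the event forcing strictness has positive conditional probability, we conclude $\ex[|\bm Y_{(p)} - \bm Y'_{(p)}|] < d_W(\mu,\nu)$. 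The main obstacle, and the point requiring the most care, is precisely this strictness step: identifying a positive-probability event on which the pointwise bound is strict, handling the possibility that $\mu$ and $\nu$ are non-atomic (so equality events have probability zero), and making sure the argument does not secretly require $p = 0$ — it is here that the hypothesis $p < 1$ (equivalently $\pr(\bm B = 1) > 0$) is used, since the $\max$ in the $\bm B = 1$ branch is what creates the contraction. Once the lemma is established, \cref{prop:SolvingTheInequation} will follow from a standard fixed-point argument on the compact space $\mathcal M_1([0,1])$: $\delta_0$ is a fixed point of $\Phi_p$, any solution $\mu$ of $\mu \stochleq \Phi_p(\mu)$ can be compared to $\delta_0$, and strict contraction forces $\mu = \delta_0$.
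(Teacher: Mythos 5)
Your setup coincides with the paper's: couple $\mu$ and $\nu$ optimally, take three i.i.d. copies of the coupled pair, share the Dirichlet vector and the Bernoulli variable between the two constructions, and condition on $\bm B$ so that the $1$-Lipschitz bound $\ex[|\bm Y_{(p)}-\bm Y'_{(p)}|]\le d_W(\mu,\nu)$ follows from $\sum_k \ex[\bm\Delta_k]=1$. All of that is correct and is exactly what the paper does.

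However, the strictness step --- which you yourself identify as the crux --- is a genuine gap as written. Your first candidate event $\{\bm B=1\}\cap\{\bm X_1=\bm X_1'\}\cap\{\bm X_2=\bm X_2'\}\cap\{\bm X_0\ne\bm X_0'\}$ is not merely problematic for non-atomic measures: on that event $\max(\bm\Delta_1\bm X_1,\bm\Delta_2\bm X_2)=\max(\bm\Delta_1\bm X_1',\bm\Delta_2\bm X_2')$, so $|\bm Y_1-\bm Y_1'|=\bm\Delta_0|\bm X_0-\bm X_0'|$ and the bound is \emph{tight} there; it is exactly the wrong event. Your fallback observation that $\max(\bm\Delta_1\bm X_1,\bm\Delta_2\bm X_2)<\bm\Delta_1\bm X_1+\bm\Delta_2\bm X_2$ when both terms are positive compares $\bm Y_1$ to $\bm Y_0$, not $\bm Y_1^\mu$ to $\bm Y_1^\nu$, so it does not bear on the Lipschitz constant; and the criterion ``not all of the same sign pattern that makes the max additive'' is not a correct characterization and is never verified to occur with positive probability. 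The mechanism the paper uses is the two-step inequality $|\max(a,b)-\max(c,d)|\le\max(|a-c|,|b-d|)\le|a-c|+|b-d|$, whose \emph{second} inequality is strict as soon as $a\ne c$ \emph{and} $b\ne d$. With $a=\bm\Delta_1\bm X_1$, $c=\bm\Delta_1\bm X_1'$, $b=\bm\Delta_2\bm X_2$, $d=\bm\Delta_2\bm X_2'$, the event $\{\bm X_1\ne\bm X_1'\}\cap\{\bm X_2\ne\bm X_2'\}$ has positive probability because each factor does (since $\mu\ne\nu$ forces $\pr(\bm X\ne\bm X')>0$ under the optimal coupling) and the two pairs are independent; since $\bm\Delta_1,\bm\Delta_2>0$ a.s., this yields $\ex[|\bm Y_1^\mu-\bm Y_1^\nu|]<d_W(\mu,\nu)$, and then $p<1$ (i.e. $\pr(\bm B=1)=1-p>0$) propagates the strict inequality to $\Phi_p$. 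You need to supply this argument (or an equivalent one) to close the proof.
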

    \begin{proof}
      Let $\mu$ and $\nu$ be probability distributions on $[0,1]$.
      We choose a pair $(\bm X^\mu_0, \bm X^\nu_0)$ of random variables of distribution
      $\mu$ and $\nu$ respectively such that $\ex[|\bm X^\mu_0- \bm X^\nu_0|]=d_W(\mu,\nu)$
      (as mentioned above, such a coupling always exists).
      We then let $(\bm X^\mu_1, \bm X^\nu_1)$ and $(\bm X^\mu_2, \bm X^\nu_2)$ be independent copies
      of $(\bm X^\mu_0, \bm X^\nu_0)$. Finally, we let $(\bm \Delta_0, \bm \Delta_1, \bm \Delta_2)$ be
      a random vector with distribution $\Dirichlet(1/2,1/2,1/2)$ independent from
      $(\bm X^\mu_i, \bm X^\nu_i)_{i \in \{0,1,2\}}$, and $\bm B$ a Bernoulli$(1-p)$ random variable, independent from $(\bm \Delta_0, \bm \Delta_1, \bm \Delta_2,(\bm X^\mu_i,\bm X^\nu_i)_{i \in \{0,1,2\}})$.
      
     As in Eqs. (\ref{eq:DefY0}) - (\ref{eq:DefY}), we define ${\bm Y_{0}^\mu}$, ${\bm Y_{0}^\nu}$, ${\bm Y_{1}^\mu}$, ${\bm Y_{1}^\nu}$, ${\bm Y_{(p)}^\mu}$ and ${\bm Y_{(p)}^\nu}$ on the same probability space and coupled in a non-trivial way: 
we use the same vector $(\bm \Delta_0, \bm\Delta_1, \bm \Delta_2)$ and Bernoulli variable $\bm B$ for both ${\bm Y_{(p)}^\mu}$ and ${\bm Y_{(p)}^\nu}$.

      Then we have
      \begin{multline}
        \ex\big[ |{\bm Y^\mu_0} - {\bm Y^\nu_0} | \big]
      \le \sum_{i=0}^2 \ex\big[\bm \Delta_i\big] \ex\big[ |\bm X^\mu_i- \bm X^\nu_i|\big]
      =\left( \sum_{i=0}^2 \ex\big[\bm \Delta_i\big] \right) 
      d_W(\mu,\nu) =d_W(\mu,\nu),
      \label{eq:IneqDiffY0}
    \end{multline}
      where we used successively the fact that $ \bm  \Delta_i$ is independent from $(\bm X^\mu_i,\bm X^\nu_i)$,
      the fact that the coupling $\bm X^\mu_i,\bm X^\nu_i$ minimizes their $L^1$ distance
      and the fact that $\sum_{i=0}^2 \bm \Delta_i=1$ almost surely.
     We also have
      \begin{equation}
        \ex\big[ |{\bm Y^\mu_1} - {\bm Y^\nu_1} | \big] 
      \le \ex\big[\bm \Delta_0\big] \ex\big[ |\bm X^\mu_0- \bm X^\nu_0|\big] 
      + \ex\Big[ \big|\max(\bm \Delta_1 \, \bm  X^\mu_1, \bm \Delta_2 \, \bm X^\mu_2) 
      - \max(\bm \Delta_1 \,  \bm X^\nu_1, \bm \Delta_2 \, \bm X^\nu_2) \big|\Big]. 
      \label{eq:UpperBoundDiffY1}
 \end{equation}
      We recall the trivial inequality $|\max(a,b)-\max(c,d)| \le \max(|a-c|,|b-d|) \le |a-c|+|b-d|$.
      Besides, the second inequality is strict as soon as $a \ne c$ and $b\ne d$.
      Taking 
      \[a=\bm \Delta_1 \,  \bm X^\mu_1, \ b= \bm \Delta_2 \, \bm X^\mu_2,\
      c= \bm \Delta_1 \,  \bm X^\nu_1, \ d=\bm \Delta_2  \, \bm X^\nu_2,\]
      we obtain that, almost surely,
      \[\big|\max(\bm \Delta_1 \,  \bm X^\mu_1, \bm \Delta_2 \, \bm X^\mu_2)             
            - \max(\bm \Delta_1 \,  \bm X^\nu_1, \bm \Delta_2 \, \bm X^\nu_2) \big| 
            \le \bm \Delta_1 |\bm X^\mu_1- \bm X^\nu_1| + \bm \Delta_2 |\bm X^\mu_2 - \bm X^\nu_2|.\]
      Moreover, since $\mu \ne \nu$, we have that $\bm X^\mu_1 \ne \bm X^\nu_1$ with positive probability.
      The same holds for $\bm X^\mu_2 \ne \bm X^\nu_2$, and, by independence, 
      both inequalities occur simultaneously with positive probability.
      Since $\bm \Delta_1$ and $\bm \Delta_2$ are positive almost surely,
      we have that $a \ne c$ and $b\ne d$ simultaneously with positive probability.
      We conclude that the above inequality is strict with positive probability.
      Taking expectation and using \cref{eq:UpperBoundDiffY1}, we get
      \begin{equation}
        \ex\big[ |{\bm Y^\mu_1} - {\bm Y^\nu_1} | \big]  < \sum_{i=0}^2 \ex\big[\bm \Delta_i\big] \ex\big[ |\bm X^\mu_i- \bm X^\nu_i|\big]
        = d_W(\mu,\nu),
        \label{eq:IneqDiffY1}
   \end{equation}
   where the last equality is taken from \eqref{eq:IneqDiffY0}.
Finally, 
 \begin{align*}d_W(\Phi_p(\mu),\Phi_p(\nu)) &\le
 \ex[|{\bm Y_{(p)}^\mu} - {\bm Y_{(p)}^\nu}|] \\ 
 &= \pr(\bm B \!=\! 1)\ex\big[|{\bm Y_{(p)}^\mu} - {\bm Y_{(p)}^\nu}| \mid \bm B \!=\!1\big] + \pr(\bm B\!=\!0) \ex\big[|{\bm Y_{(p)}^\mu} - {\bm Y_{(p)}^\nu}| \mid \bm B \!=\!0\big] \\
 &=  (1-p) \,\ex\big[ |{\bm Y^\mu_1} - {\bm Y^\nu_1} | \big]
 + p \, \ex\big[ |{\bm Y^\mu_0} - {\bm Y^\nu_0} | \big].
 \end{align*}
    The lemma thus follows
    from \cref{eq:IneqDiffY0,eq:IneqDiffY1} and the fact that $p\neq 1$.
    \end{proof}
    \begin{proof}[Proof of \cref{prop:SolvingTheInequation}]
    We first note that $\Phi_p$ is nondecreasing with respect to stochastic domination,
    namely if $\mu \le_d \nu$ then $\Phi_p(\mu) \le_d \Phi_p(\nu)$.
    Therefore if $\mu$ is a solution of Inequation~\eqref{eq:TheDistributionalInequality}, {\it i.e.} $\mu \leq_d \Phi_p(\mu)$,
    we have $\Phi_p(\mu) \le_d \Phi_p^2(\mu)$ and, iterating the application of $\Phi_p$, we get $\mu \leq_d \Phi_p(\mu) \le_d \cdots \le_d \Phi_p^k(\mu)$ for all $k \ge 1$.

    Moreover the Dirac distribution $\delta_0$ is a fixed point of $\Phi_p$.
    Since $\Phi_p$ is a weak contraction by~\cref{lem:contraction} and since $\mathcal M_1([0,1])$ is compact,
    we know from Banach fixed-point theorem that~$\Phi_p^k$ tends to $\delta_0$ in distribution.
    Combined with $\mu \leq_d \Phi_p^k(\mu)$, this forces $\mu=\delta_0$ for any probability distribution $\mu$
    on $[0,1]$ verifying \eqref{eq:TheDistributionalInequality},
    which is what we wanted to prove. 
  \end{proof}

 \subsection{Completing the proof of the sublinearity results}
\label{ssec:completing_proof} 

\cref{prop:InequationSatisfied,prop:SolvingTheInequation}
imply the following result, which is the core of the proofs
of our sublinearity results (\cref{thm:graphes_universel,thm:permutations_universel}).
  \begin{theorem}
    For $p$ in $[0,1)$, we have $\ind( \bm W^{p} )=0$ almost surely. 
    \label{thm:IndBrownianZero}
  \end{theorem}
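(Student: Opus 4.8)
The plan is to deduce \cref{thm:IndBrownianZero} directly by combining the two propositions just established. First I would let $\mu_p := \Law(\ind(\bm W^p))$ denote the distribution of the independence number of the Brownian cographon of parameter $p$, which is a well-defined element of $\mathcal M_1([0,1])$ since $\ind$ is measurable on the space of graphons (as noted after \cref{prop:SemiContinuity}). By \cref{prop:InequationSatisfied}, the distribution $\mu_p$ satisfies the inequation \eqref{eq:TheDistributionalInequality}, that is $\mu_p \stochleq \Phi_p(\mu_p)$.

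Then, for $p \in [0,1)$, \cref{prop:SolvingTheInequation} asserts that the only probability distribution on $[0,1]$ solving \eqref{eq:TheDistributionalInequality} is the Dirac mass $\delta_0$. Hence $\mu_p = \delta_0$, which is exactly the statement that $\ind(\bm W^p) = 0$ almost surely. There is no real obstacle here: the entire difficulty has already been absorbed into \cref{prop:InequationSatisfied} (the self-similarity / Aldous decomposition argument) and \cref{prop:SolvingTheInequation} (the Banach fixed-point argument via \cref{lem:contraction}). The only minor point to be careful about is that \cref{prop:SolvingTheInequation} genuinely requires $p \neq 1$ — which is why the statement is restricted to $p \in [0,1)$ — and that for $p = 1$ the Brownian cographon is a.s. the empty graphon, so $\ind(\bm W^1) = 1$ a.s. and the conclusion would fail; this is consistent with the hypotheses.

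So the proof is essentially a two-line chaining of the previous results, and I would write it as such.

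\begin{proof}
Let $p \in [0,1)$ and let $\mu$ denote the distribution of $\ind(\bm W^p)$, viewed as an element of $\mathcal M_1([0,1])$ (this makes sense thanks to the measurability of $\ind$ recorded after \cref{prop:SemiContinuity}). By \cref{prop:InequationSatisfied}, $\mu$ satisfies the inequation \eqref{eq:TheDistributionalInequality}. By \cref{prop:SolvingTheInequation}, the only solution of this inequation on $[0,1]$ is $\delta_0$. Therefore $\mu = \delta_0$, i.e. $\ind(\bm W^p) = 0$ almost surely.
\end{proof}
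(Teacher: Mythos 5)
Your proof is correct and is exactly the paper's argument: the theorem is stated there as an immediate consequence of \cref{prop:InequationSatisfied} and \cref{prop:SolvingTheInequation}, chained precisely as you do. Your side remark about the $p=1$ case is accurate but not needed for the proof.
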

We now proceed with the proofs of our sublinearity results.
 
 \begin{proof}[Proof of \cref{thm:graphes_universel}]
 Let $p \in [0,1)$ and consider a sequence $(\bm G_n)$ of random graphs which converges to the Brownian cographon $\bm W^{p}$. 
  By Skorokhod's representation theorem, we can represent all $\bm G_n$ and $\bm W^{p}$
  on the same probability space so that $\bm G_n$ converges to $\bm W^{p}$ in the cut distance almost surely.
  Applying \cref{prop:SemiContinuity}, we get that, a.s.,
  \[\limsup_{n\to\infty} \tfrac 1n \Ind(\bm G_n)=\limsup_{n\to\infty}\ind(W_{\bm G_n}) \le \ind( \bm W^{p} ),\]
  By \cref{thm:IndBrownianZero}, the upper bound is $0$ a.s.
 Thus, $\tfrac 1n \Ind(\bm G_n)$ converges to $0$ a.s.
    and hence in probability.
 \end{proof}
 
 \begin{proof}[Proof of \cref{thm:permutations_universel}]
Recall that, for any permutation $\sigma$,
 there is a one-to-one correspondence between increasing subsequences
  of $\sigma$ and independent sets of $\inv(\sigma)$. 
In particular, one has~$\LIS(\si)=\alpha(\inv(\si))$.

Consider now a sequence $\bm \si_n$ of random permutations tending
to the Brownian separable permuton $\bm \mu^p$ for $p \in [0,1)$.
By \cref{prop:inv}, the sequence $\inv(\bm \si_n)$
converges to the Brownian cographon $\bm W^{p}$.
Applying \cref{thm:graphes_universel} gives
that $\frac{\alpha(\inv(\bm \si_n))}{n}$ tends to $0$ in probability.
But~$\frac{\alpha(\inv(\bm \si_n))}{n}=\frac{\LIS(\bm \si_n)}{n}$ a.s.,
concluding the proof.
 \end{proof}

\section{Expected number of independent sets of linear size}\label{sec:EstimationDrmota}

For $k\leq n$ let $\bm X_{n,k}$ be the random variable given by the number of independent sets of size $k$ in a
uniform labeled cograph of size $n$.
The goal of this section is to prove \cref{th:AsymptX_nalpha},
{\emph i.e.} to estimate $\mathbb{E}[\bm X_{n,k}]$ in the case where $k$ grows linearly in $n$.

The first step of the proof is to obtain equations for the exponential generating series of cographs with a marked independent set,
through symbolic combinatorics. To this aim, it is convenient to encode cographs by their \emph{cotrees}. 
The asymptotic analysis is then performed via saddle-point analysis.

\subsection{Combinatorial preliminaries: cographs and cotrees}

\begin{definition}\label{def:cotree}
A labeled \emph{cotree} of size $n$ is a rooted tree $t$ with $n$ leaves labeled from $1$ to $n$ such that:
\begin{itemize}
\item $t$ is not plane, (\emph{i.e.} the children of every internal node are not ordered);
\item every internal node has at least two children;
\item every internal node in $t$ is decorated with a $\Zero$ or a $\One$; 
\item decorations $\Zero$ and $\One$ should alternate along each branch from the root to a leaf.
\end{itemize}
An \emph{unlabeled cotree} of size $n$ is a labeled cotree of size $n$ where we forget the labels on the leaves.
\end{definition}

%\begin{remark}
%In~\cite{CographonBrownien}, the objects above were called \emph{canonical cotrees}, 
%the \emph{cotrees} therein being defined relaxing the last condition above. 
%In the current paper, we only use cotrees where decorations $\Zero$ and $\One$ do alternate, 
%hence we return to the usual terminology, calling them simply cotrees. 
%\end{remark}

For an unlabeled cotree $t$, we denote by $\cograph(t)$ the unlabeled graph defined recursively as follows (see an illustration in \cref{fig:ex_cotree}):
\begin{itemize}
\item If $t$ consists of a single leaf, then $\cograph(t)$ is the graph with a single vertex.
\item Otherwise, the root of $t$ has decoration $\Zero$ or $\One$ and has subtrees $t_1$, \dots, $t_d$
attached to it ($d \ge 2$).
Then, if the root has decoration $\Zero$, we let $\cograph(t)$ be the {\em disjoint union}
of $\cograph(t_1)$, \dots, $\cograph(t_d)$. 
Otherwise, the root has decoration $\One$, and
 we let \linebreak $\cograph(t)$ be the {\em join} %ok
of $\cograph(t_1)$, \dots, $\cograph(t_d)$.
\end{itemize}

\begin{figure}%
\begin{center}
\includegraphics[width=8cm]{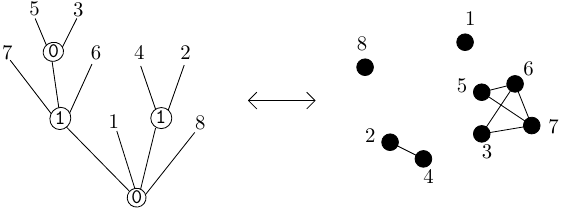}
\caption{Left: A labeled cotree $t$ with $8$ leaves. Right: The associated labeled cograph $\cograph(t)$ of size $8$. }
\label{fig:ex_cotree}
\end{center}
\end{figure}

Note that the above construction naturally entails a one-to-one correspondence 
between the leaves of the cotree $t$ and the vertices of its associated graph $\cograph(t)$. 
Therefore, it maps the size of a cotree to the size of the associated graph. 
Another consequence is that we can extend the above construction to a \emph{labeled} cotree $t$, 
and obtain a \emph{labeled} graph (also denoted $\cograph(t)$), with vertex set $\{1,\dots,n\}$: 
each vertex of $\cograph(t)$ receives the label of the corresponding leaf of $t$. 

By construction, for all cotrees $t$, the graph $\cograph(t)$ is a cograph.
Conversely, each cograph can be obtained in this way, and this correspondence is one-to-one. 
This property is ensured by the alternation of decorations $\Zero$ and $\One$ in cotrees. 
This was first shown in ~\cite{corneil}. 
The presentation of~\cite{corneil}, although equivalent, is however a little bit different, 
since cographs are generated using exclusively ``complemented unions''
instead of disjoint unions and joins. 
The presentation we adopt has since been used in many algorithmic papers, see \emph{e.g.}~\cite{Habib,Bretscher}. 

From a cograph $G$, the unique cotree $t$ such that $\cograph(t)=G$ is recursively built as follows. 
If $G$ consists of a single vertex, $t$ is the unique cotree with a single leaf. 
If $G$ has at least two vertices, we distinguish cases depending on whether $G$ is connected or not. 
\begin{itemize}
 \item If $G$ is not connected, the root of $t$ is decorated with $\Zero$ and the subtrees attached to it are the cographs associated with the connected components of $G$. 
 \item If $G$ is connected, the root of $t$ is decorated with $\One$ and the subtrees attached to it are the cographs associated with the induced subgraphs of $G$ 
whose vertex sets are those of the connected components of $\bar{G}$, where $\bar{G}$ is the complement of $G$ 
(graph on the same vertices with complement edge set). 
\end{itemize}
Important properties of cographs which justify the correctness of the above construction are the following: 
cographs are stable under taking induced subgraphs and complement, and a cograph~$G$ of size at least two is not connected exactly when its complement $\bar{G}$ is connected.

\begin{remark}\label{rem:alpha_vs_omega}
The transformation which switches every decoration $\One \leftrightarrow \Zero$
 in a cotree is of course an involution. 
 Moreover, it turns independent sets into cliques in the corresponding cograph
 (indeed $\{v,v'\}$ is an edge in $\cograph(t)$ if and only if the first common ancestor of the corresponding leaves of $t$ has decoration $1$).
 This proves that for every $n$, 
 if $\bm G_n$ denotes a uniform random cograph (either labeled or unlabeled) of size $n$, then 
\begin{equation}\label{eq:OmegaEgalAlpha}
\alpha({\bm G_n})\stackrel{(d)}{=} \omega({\bm G_n}),
\end{equation}
where $\omega(G)$ is the maximum size of a clique in the graph $G$.
\end{remark}

\subsection{Proof of \cref{th:AsymptX_nalpha}: Enumeration}\label{Subsec:Enumeration}

Let $\mathcal{L}$ be the combinatorial family of labeled cotrees for which we forget decorations, counted by the number of leaves. 
Let $L(z)$ denote the corresponding exponential generating function. The series $L(z)=\sum_{\ell \in \mathcal{L}} z^{|\ell|}/|\ell|!$ is the unique formal power series solution of
\begin{equation}\label{eq:SerieS}
L(z)=z+e^{L(z)}-1-L(z)
\end{equation}
such that $L(0)=0$.
(The enumeration of $\mathcal{L}$ is provided in \cite[Example VII.12 p.472]{Violet} under the name of \emph{labeled hierarchies}, see also Propositions 5.1 and 5.4 of \cite{CographonBrownien}.)

Next we consider pairs $(G,I)$, where $G$ is a (labeled) cograph and
$I$ an independent set of~$G$. We see such a pair as a \emph{marked} cograph.
Let us consider the associated bivariate generating function
\[
C(z,u) = \sum_{\substack{(G,I)\,:\, G\  \mbox{cograph},\,\\ I\subseteq V_G\ \mbox{independent}}} \frac 1{|V_G|!} z^{|V_G|} u^{|I|}.
\]

Then $\mathbb{E}[\bm X_{n,k}] = \frac{ [z^n u^{k}]\, C(z,u) }{ [z^n] C(z,0) }$.
Our goal is then to find the asymptotics of these coefficients.

Note that if $G$ is reduced to a single vertex $\bullet$  we have $(G,I)=(\bullet,\varnothing)$ or $(\bullet,\{\bullet\})$, therefore
\begin{equation}\label{eq:C_C0_C1}
C(z,u) = z + zu + C_0(z,u) + C_1(z,u),
\end{equation}
where $C_0(z,u)$ (resp $C_1(z,u)$) is the bivariate series of the set $\mathcal{C}_0$ (resp. $\mathcal{C}_1$) of 
marked cographs (necessarily of size $\geq 2$) for which the root of the associated cotree is decorated with a $0$ (resp. a $1$).
We have $$L(z) = z+ C_0(z,0) = z+ C_1(z,0).$$
Indeed when the decoration of the root is fixed, the other decorations are then determined by the alternation condition.

\begin{proposition}[Functional equations for $C_0,C_1$]\ 
\begin{enumerate}
\item A relation between the series $C_0(z,u)$ and $C_1(z,u)$ is given by
\begin{equation}\label{eq:C_1}
C_0(z,u) = e^{z(1+u)+C_1(z,u)}-1 -z(1+u)-C_1(z,u),
\end{equation}
\item and the series $C_1(z,u)$ is a solution of
\begin{equation}\label{eq:C_0}
C_1(z,u) = e^{L(z)}-1-L(z) + (e^{L(z)}-1)\left( e^{z(1+u)+ C_1(z,u)} - 1 -C_1(z,u) -L(z) \right).
\end{equation}
\end{enumerate}
\end{proposition}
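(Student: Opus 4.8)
The strategy is to translate the recursive decomposition of cotrees into equations for bivariate generating functions, carefully tracking both the marking of the independent set and the alternation constraint on decorations. The key combinatorial observation, already noted in the excerpt, is that an independent set $I$ in $\cograph(t)$ interacts with the operations as follows: if the root of $t$ carries a $\Zero$ (disjoint union of subtrees $t_1,\dots,t_d$), then $I$ is freely the disjoint union of independent sets in each $\cograph(t_i)$; if the root carries a $\One$ (join), then $I$ can meet \emph{at most one} of the $\cograph(t_i)$, since any two vertices from different blocks are adjacent. Single-vertex contributions are recorded by the term $z(1+u)$ (the vertex is either outside or inside $I$), which is why this combination appears throughout.

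First I would derive \eqref{eq:C_1}. A marked cograph in $\mathcal C_0$ has a root decorated $\Zero$ with at least two children, each child being either a single leaf or a marked cograph whose cotree-root is decorated $\One$ (by alternation). Since the children form an unordered multiset and the structure is labeled, the set construction in the labeled (exponential) framework yields $\exp$ of the generating function of a single child, which is exactly $z(1+u) + C_1(z,u)$: a leaf contributes $z(1+u)$, a $\One$-rooted marked cograph contributes $C_1(z,u)$. Requiring at least two children means subtracting the empty set and the single-child cases, giving $e^{z(1+u)+C_1(z,u)} - 1 - \big(z(1+u)+C_1(z,u)\big)$, which is \eqref{eq:C_1}.

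Next, \eqref{eq:C_0}. A marked cograph in $\mathcal C_1$ has a root decorated $\One$ with $d\ge 2$ children, each a leaf or a $\Zero$-rooted marked cograph. But now the independent set $I$ lies entirely in one child (or is empty). I would split into two cases: either $I$ meets no child — then we are counting \emph{unmarked} $\One$-rooted cographs, whose generating function is $e^{L(z)} - 1 - L(z)$ (forgetting decorations below, each child is a leaf or a $\Zero$-rooted cograph, and $L(z) = z + C_0(z,0) = $ the series of one child, with the ``at least two children'' correction) — or $I$ is nonempty and sits inside exactly one distinguished child, which must be a leaf-in-$I$ or a properly marked $\Zero$-rooted cograph with nonempty mark. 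The distinguished child contributes $\big(e^{z(1+u)+C_1(z,u)} - 1 - C_1(z,u) - L(z)\big)$ — heuristically, this is $C_1$ evaluated with the marking ``on,'' minus the part with empty mark $L(z)$, minus lower-order corrections from the at-least-two-children condition — while the remaining (at least one) unmarked children contribute a factor $e^{L(z)} - 1$. Multiplying and adding the two cases gives \eqref{eq:C_0}.

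The main obstacle I anticipate is bookkeeping the ``at least two children'' constraint simultaneously with the marking in the second equation: one must be careful that the correction terms $-1$, $-C_1(z,u)$, $-L(z)$ in the factor for the marked child, and the factor $e^{L(z)}-1$ (rather than $e^{L(z)}$) for the unmarked siblings, are exactly the right inclusion--exclusion terms so that every marked $\One$-rooted cograph of size $\ge 2$ is counted once and only once — in particular that the degenerate configurations (root with a single child, or the distinguished marked child being ``too small'') cancel precisely. I would verify this by checking the equations on small cases or, equivalently, by the symbolic method's transfer for pointing-in-a-set constructions; once the dictionary between the combinatorial classes and the operators $\exp$, pointing, and the $-1$/linear corrections is set up cleanly, the identities \eqref{eq:C_1} and \eqref{eq:C_0} follow formally.
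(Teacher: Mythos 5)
Your proposal is correct and follows essentially the same decomposition as the paper: case on the root decoration, with the $\Zero$-root giving a multiset of at least two children (leaves or $\One$-rooted marked cographs), and the $\One$-root splitting into the empty-mark case plus a distinguished marked child times at least one unmarked sibling. The only point where the paper is cleaner is the step you flag as needing verification: it first writes the second equation as $C_1(z,u)=\exp_{\ge 2}(z+C_0(z,0))+\bigl(zu+C_0(z,u)-C_0(z,0)\bigr)\exp_{\ge 1}(z+C_0(z,0))$ and then eliminates $C_0$ via \eqref{eq:C_1} and $L(z)=z+C_0(z,0)$, which makes the correction terms automatic (and shows your distinguished-child factor is $zu+C_0(z,u)-C_0(z,0)$, not a variant of $C_1$).
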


In the proof below, we make use of the notation $\exp_{\ge k} (x) := \sum_{i \ge k} \frac{x^i}{i!}$.

\begin{proof}
  When a cotree $T$ has its root $r$ decorated by a $0$, if we denote by $(T_i)$ the subtrees rooted at the children of $r$, 
  then the cograph $G$ associated with $T$ is the disjoint union of the cographs $G_i$ corresponding to the $T_i$.
  An independent set of $G$ is then the union of independent sets chosen in each of the $G_i$. 
  Recall also that by definition of cotrees, $r$ has at least two children. 
  
  Therefore the marked cographs for which the root of the associated cotree is decorated with a $0$ 
  can be described as a multiset of at least two elements chosen between $(\bullet,\varnothing)$, $(\bullet,\{\bullet\})$ and the elements of $\mathcal{C}_1$.

  Using the symbolic method for labeled structures \cite{Violet}, we get the equation 
\begin{align*}
  C_0(z,u) &= \exp_{\ge 2} \left( z + zu + C_1(z,u) \right)
  = e^{z(1+u)+C_1(z,u)}-1 -z(1+u)-C_1(z,u),
\end{align*}
which is \cref{eq:C_1}.

When on the contrary a cotree $T$ has its root $r$ decorated by a $1$, 
if we denote again by $(T_i)$ the subtrees rooted at the children of $r$, 
then the cograph $G$ associated with $T$ is the join of the cographs $G_i$ corresponding to the $T_i$.
An independent set of $G$ must then be an independent set chosen in one of the $G_i$ only 
(and the other children of $r$ do not contribute to this independent set). 

Let $\mathcal{C}_{\varnothing}$ denote the set of cographs without mark and whose cotree does not have a root decorated by a $1$, 
{\emph i.e.} $\mathcal{C}_{\varnothing}$ is the set consisting in $(\bullet,\varnothing)$ and the elements of $\mathcal{C}_0$ marked with an empty independent set.
Then, we distinguish two cases to describe the elements of $\mathcal{C}_1$
(marked cographs for which the root of the associated cotree is decorated with a $1$). 
Either they are marked with an empty independent set, 
and in this case they can be described as multisets of at least two elements of $\mathcal{C}_{\varnothing}$. 
Or they are marked with a nonempty independent set, and 
they can be described as the pairs consisting of 
\begin{itemize}
 \item a cograph which is either $(\bullet, \{\bullet\})$ or an element of $\mathcal{C}_0$ marked with a nonempty independent set
 (for the graph $G_i$ containing the independent set); and 
 \item a multiset of at least one element of $\mathcal{C}_{\varnothing}$ (for the other graphs $G_i$). 
\end{itemize} 

We get the equation
\begin{align*}
C_1(z,u) &= \exp_{\ge 2}\left( z + C_0(z,0) \right) + 
\left( zu + C_0(z,u) - C_0(z,0)\right) \times \exp_{\ge 1} \left( z + C_0(z,0) \right). 
\end{align*}
Thus, by eliminating $C_0(z,u)$ and using that $L(z)=z+C_0(z,0)$, we obtain \cref{eq:C_0}.
\end{proof}

From \cref{eq:C_C0_C1,eq:C_1} we have
\begin{equation}\label{eq:C_C1}
C(z,u) = e^{z(1+u)+C_1(z,u)}-1.
\end{equation}
In the following, to get the asymptotics of the coefficients of $C(z,u)$,
we study $C_1(z,u)$ using \cref{eq:C_0}.

\subsection{Proof of \cref{th:AsymptX_nalpha}: main asymptotics - Proof of Eq.~\eqref{eq:AsymptotiqueEX_n,k}} 

Following Flajolet and Sedgewick \cite[p. 389]{Violet}, we say that
a domain $\Delta$ is a {\em $\Delta$-domain at $\rho$} if there exist two real numbers  $R>\rho$ and $0<\phi<\tfrac{\pi}{2}$ such that
$$\Delta=\{z \in \mathbb{C} \mid |z|<R,\, z\neq \rho,
|\arg(z-\rho)|>\phi\}$$
and that a power series is {\em $\Delta$-analytic}
if it is analytic in some $\Delta$-domain at $\rho$, where $\rho$ is its radius of convergence.

From \cite[Example VII.12 p.472]{Violet} the series $L(z)$ has radius of convergence \linebreak ${\rho=2\log(2)-1}$ and is $\Delta$-analytic. %ok
Moreover the following expansion holds in a $\Delta$-domain at $z=\rho$:
\begin{equation}\label{eq:Asympt_S}
L(z)\underset{z\to \rho}{=}\log(2)-\sqrt{\rho}\sqrt{1-\tfrac{z}{\rho}} +\mathcal{O}(1-\tfrac{z}{\rho}).
\end{equation}
\cref{eq:Asympt_S} combined with the transfer theorem \cite[Cor.VI.1]{Violet} yields
\begin{equation}\label{eq:Asympt_ell}
[z^n]L(z) \sim \sqrt{\frac{2\log 2-1}{4\pi}}\rho^{-n}\,n^{-3/2}.
\end{equation}

This allows to obtain the asymptotics of $[z^n] C(z,0)$. To get the one of $[z^n u^{k}]\, C(z,u)$,
we turn to the study of $C_1(z,u)$.

%\medskip

Fix $u \in \mathbb{C}$. The overall strategy is to perform saddle-point analysis with $C_1(z,u)$.
To do so we rewrite \cref{eq:C_0} as $C_1(z,u)$ is solution of $c= G(z,c,u)$ where
\[
G(z,c,u) = e^{L(z)}-1-L(z) + (e^{L(z)}-1)\left( e^{c + z(1+u)} - 1 -c -L(z) \right).
\]

We will show that this almost fits the settings of the so-called \emph{smooth implicit-function schema} (see \cite[Sec. VII.4.1]{Violet}), 
only the nonnegativity of the coefficients of $G$ is not satisfied here. 
Nevertheless, we shall prove that sufficient conditions for the validity of \cite[Thm. VII.3 p.468]{Violet} are satisfied.
%\medskip
First
observe that for every $u \in  \mathbb{C}$ the bivariate series $(z,c)\mapsto G(z,c,u)$ is analytic for $|z|< \rho$ and~$c\in \mathbb{C}$.
%\medskip

\subsubsection{Solution of the characteristic system}
We use the notational convention that, for any function $H$ and variable $t$, 
$H_t$ denotes the partial derivative of $H$ with respect to $t$. 
We consider the \emph{characteristic system}
\begin{equation}
\label{eq:CharacteristicSystemGenerale}
G(r,s,u)=s,\qquad G_c(r,s,u)=1,
\end{equation}
namely
\begin{align}
  e^{L(r)}-1-L(r) + (e^{L(r)}-1)\left( e^{s + r(1+u)} - 1 -s -L(r) \right)&=s, \label{eq:CharacteristicSystem}\\
(e^{L(r)}-1) \left( e^{s + r(1+u)}  - 1 \right)&=1. \label{eq:CharacteristicSystem2}
\end{align}
We aim at proving that, for any $u>0$, \eqref{eq:CharacteristicSystemGenerale} admits a unique solution $(r,s) =(r(u),s(u))$ with $0<r<\rho$ and $0<s$. 
Below, we often use that the radius of convergence $\rho$ of $L(z)$ satisfies $\rho = 2\log(2)-1$ and $L(\rho) = \log(2)$. 

We observe that if we substitute \cref{eq:CharacteristicSystem2} into \cref{eq:CharacteristicSystem} we obtain that
\begin{equation}\label{eq:s(u)}
s=1-L\left(r \right).
\end{equation}
Then \cref{eq:CharacteristicSystem2} can be rewritten as
\begin{equation}\label{eq:r(u)}
  F(r,u)=0 \quad \mbox{with} \quad F(x,u)=(e^{L(x)}-1)(e^{1-L(x)+x(1+u)}-1)-1.
\end{equation}

We have
\begin{align*}
F_x(x,u)&=\left(L'(x)e^{L(x)}+\big(1+u-L'(x)\big)(e^{L(x)}-1)\right)e^{1-L(x)+x(1+u)}-L'(x)e^{L(x)}\\
&= (1+u)(e^{L(x)}-1)e^{1-L(x)+x(1+u)} +L'(x) \big(e^{1-L(x)+x(1+u)} -e^{L(x)}\big).
\end{align*}
Fix $u>0$. 
For $0 < x\leq\rho$, one has 
\[1-L(x)+x(1+u) > 1-L(x)+x \ge L(x)\] 
(indeed one has equality for $x=\rho$ and $2L(x)-x$ is increasing), so that $F_x(x,u) > 0$.
Therefore, the function $F$ is increasing with $x$ on the interval $[0,\rho]$. 
Since $F(0,u)=-1$ and $F(\rho,u) > F(\rho,0)=0$, for any $u>0$ \cref{eq:r(u)} admits a unique solution $r=r(u)$
in $[0,\rho]$, and we have $0<r<\rho$.

From \cref{eq:s(u)}, we have $s=1-L(r)$.
Since $L$ is increasing and $r<\rho$, we have $s >1-L(\rho) = 1-\log(2)>0$.

We conclude that for $u>0$, the characteristic system \eqref{eq:CharacteristicSystemGenerale}
has a unique solution $r(u),s(u)$ in $[0,\rho] \times \mathbb{C}$,
and we have $0<r(u)<\rho$ and $s(u)>0$.
In particular, $r(u),s(u)$ belongs to the analyticity domain of $G$.
%\medskip

\subsubsection{Locating the singularity of $C_1(z,u)$}
Fix $u>0$. To obtain the singular behavior of $C_1(z,u)$ as in~\cite[Thm. VII.3 p.468]{Violet} 
despite the negativity of some coefficients of $G$, 
we see from \cite[Note VII.16 p.471]{Violet} that it is enough to show the following: 
$C_1(z,u)$ has radius of convergence $r(u)$
and its value at this singularity is given by $C_1(r(u),u)=s(u)$,
\emph{i.e.} the dominant singularity of $C_1(z,u)$ corresponds to the solution of the characteristic system.

The argument to prove this is an adaptation of that in the proof of \cite[Thm. VII.3 p.468]{Violet}
to our setting where $G$ has some negative coefficients but a larger analyticity region than what is usually assumed. 
Namely, our $G$ is analytic on the whole domain $\{|z|< \rho, c \in \mathbb{C}\}$,
while the smooth implicit-function schema only assumes analyticity on $\{|z|<R, |c|<S\}$ for some $R, S > 0$ (with the notation of \cite[Sec.~VII.4.1]{Violet}).
Let us denote temporarily $\rho(u)$ the radius of convergence of $C_1(z,u)$,
which is a singularity of $C_1(z,u)$ from Pringsheim's theorem.

%\smallskip

We first show that $\rho(u) \ge r(u)$. We proceed by contradiction, and assume $\rho(u)<r(u)$.
We set $\sigma(u)=C_1(\rho(u),u)$ and distinguish two cases.
\begin{itemize}
\item Assume $\sigma(u)<+\infty$. Then, since $C_1(z,u)$ is a solution of $c=G(z,c,u)$,
 we have $\sigma(u)=G(\rho(u),\sigma(u),u)$.
 By uniqueness of the solution of the characteristic system \eqref{eq:CharacteristicSystemGenerale},
 we necessarily have $G_c(\rho(u),\sigma(u),u)\ne1$.
Therefore, using the analytic implicit function lemma \cite[Lemma VII.2, p.469]{Violet},
$C_1(z,u)$ can be extended analytically in a neighborhood of $\rho(u)$,
contradicting the fact that $\rho(u)$ is a singularity of $C_1(z,u)$.
\item If $\sigma(u)=+\infty$, one checks easily that the function $z \mapsto G_c(z,C_1(z,u),u)$
tends to $+\infty$ when $z$ tends to $\rho(u)$.
But for $z=0$, we have $G_c(0,C_1(0,u),u)=0$. 
The intermediate value theorem ensures the existence of $z_1$ in $(0,\rho(u))$ such
that $G_c(z_1,C_1(z_1,u),u)=1$. This gives an other solution $(z_1,C_1(z_1,u))$ of the characteristic system,
contradicting the uniqueness of the solution.
\end{itemize}
We have reached a contradiction in both cases, proving that $\rho(u) \ge r(u)$.

%\smallskip

This allows us to consider $C_1(r(u),u)$ (which is possibly infinite), 
and we assume for the sake of contradiction that $C_1(r(u),u) \ne s(u)$.
Then for $a<r(u)$ sufficiently closed to $r(u)$ the equation $y=G(a,y,u)$ admits several solutions $y \in \mathbb{C}$:
\begin{itemize}
 \item one is given by $y=C_1(a,u)$,
 \item and two are obtained evaluating in $a$ the two functions $y_1(z)$ and $y_2(z)$ 
 given by the singular implicit function lemma \cite[Lemma VII.3, p.469]{Violet} applied to the point $(r(u),s(u))$. 
\end{itemize} 
(Note that the applicability of this lemma is guaranteed by the fact that 
$(r(u),s(u))$ is a solution of the characteristic system and \cref{eq:hyp_check_derivatives_cc,eq:hyp_check_derivatives_u} below.)

From \cite[Lemma VII.3, p.469]{Violet}, 
it is clear that the last two solutions above are distinct for $a$ close enough to $r(u)$. 
The first one is also different from them for $a$ close enough to $r(u)$: 
indeed, for $z$ tending to $r(u)$, $C_1(z,u)$ tends to $C_1(r(u),u)$ while the two other solutions tend to~$s(u)$.
However, the function $y \mapsto G(a,y,u)$ is strictly convex 
(one checks easily that its second derivative is positive)
and therefore cannot cross three times the main diagonal. 
We have reached a contradiction. 
We conclude that $C_1(r(u),u)=s(u)$.

%\smallskip
 
It remains to prove $\rho(u)=r(u)$.
Since $(r(u),s(u))$ is a solution of the characteristic system,
there is no analytic solution of the equation $y=G(z,y,u)$ around the point $(z,y)=(r(u),s(u))$
(see the proof of \cite[Lemma VII.3, p.469]{Violet}, where it is shown that any solution $y$ 
has a series expansion involving a square-root term and hence cannot be analytic).
Therefore $C_1(z,u)$ cannot be extended analytically to a neighborhood of $r(u)$.
So, $\rho(u)=r(u)$, as wanted.
%\medskip

\subsubsection{Derivatives of $G$: parametrized expressions and their signs}
Several derivatives of $G(z,c,u)$ appear in the computations below, 
to establish the asymptotic behavior of $C(z,u)$ as well as estimates (i) and (ii) of \cref{th:AsymptX_nalpha}. 
We collect useful properties of these derivatives here for convenience. 
In this paragraph, we also assume $u>0$.
Recall that 
\[G(z,c,u) = e^{L(z)}-1-L(z) + (e^{L(z)}-1)\left( e^{c + z(1+u)} - 1 -c -L(z) \right).\]

First, from the explicit expression of $G_{cc}$, it follows that 
\begin{align}
G_{cc}(r(u),s(u),u)>0. \label{eq:hyp_check_derivatives_cc}
\end{align}

Moving on to $G_u(\left(r(u),s(u),u \right))$, 
it will be convenient to parametrize the involved quantities by $y:=L\left(r(u) \right)$. \cref{eq:SerieS}
becomes  
\begin{equation}\label{eq:r_y1}
y=r(u)+ e^{y} - 1-y
\end{equation}
and therefore
\begin{align}\label{eq:r_u_beta}
r(u) &= 2y + 1 - e^{y}.
\end{align}
From \cref{eq:CharacteristicSystem2}, we quickly derive 
\begin{equation}\label{eq:Simpli_r_u}
e^{s(u)+r(u)(1+u)}=\frac{e^{y}}{e^{y}-1}
\end{equation}
and we can eliminate $r(u)$ thanks to \cref{eq:r_u_beta}: we obtain
\begin{equation}\label{eq:Def_F}
u=\frac{e^{y}-2-\log(e^{y}-1)}{2y+1-e^{y}}.
\end{equation}
Next we use the definition of $G$, and then \cref{eq:Simpli_r_u,eq:r_y1}, obtaining 
\begin{align}\label{eq:GuExpr}
G_u\left(r(u),s(u),u \right) &= (e^{L(r(u))}-1)e^{s(u) + r(u)(1+u)} r(u) \nonumber \\
&= (e^{y}-1)\frac{e^{y}}{e^{y}-1} (2y+1-e^{y})
=e^{y}(2y+1-e^{y}). 
\end{align}
From the above and \cref{eq:r_u_beta}, 
we have in particular 
\begin{equation}\label{eq:GuPos}
G_u\left(r(u),s(u),u \right) > 0
\end{equation}

Finally, we focus on $G_z\left(r(u),s(u),u \right)$. Using \cref{eq:SerieS,eq:s(u)}, we start by observing that
$$
L'(z) = \frac{1}{2-e^{L(z)}}, \qquad 1+s(u)+y=2.
$$
Therefore, 
with the shorter notation $L := L(r(u))$, $L' := L'(r(u))$, $r := r(u)$, $s:= s(u)$, we have 
\begin{align}\label{GZ}
G_z\left(r(u),s(u),u \right)  &= (e^L -1) L' + e^L L' \left( e^{s + r(1+u)} - 1 -s -L \right) \\
&\qquad \qquad +  (e^L-1)
\left( e^{s + r(1+u)}(1+u) -L' \right) \nonumber\\
&= \cancel{\frac{e^{y}-1}{2-e^{y}}} + \frac{e^{y}}{2-e^{y}} 
\times\left(\frac{e^{y}}{e^{y}-1} -2\right) \\
&\qquad \qquad+  (e^{y}-1)\times \left(\frac{e^{y}}{e^{y}-1}(1+u) - \cancel{\frac{1}{2-e^{y}}}\right)\nonumber\\
&=  \frac{e^{y}}{2-e^{y}} 
\times\left(\frac{e^{y}}{e^{y}-1} -2\right) + \cancel{(e^{y}-1)}\times \left(\frac{e^{y}}{\cancel{e^{y}-1}}(1+u) \right)
\nonumber\\
&= \frac{e^{y}}{\cancel{2-e^{y}}} 
\times \frac{\cancel{2-e^{y}}}{e^{y} -1} + e^{y} (1+u)= \frac{e^{y}}{e^{y} -1} + e^{y} (1+u).
\end{align}

In particular, this gives 
\begin{align}
G_z(r(u),s(u),u)>0. \label{eq:hyp_check_derivatives_u}
\end{align}

%\medskip

\subsubsection{Obtaining the asymptotics}
Recall that we established that $C_1(z,u)$ has radius of convergence $r(u)$
and its value at this singularity is given by $C_1(r(u),u)=s(u)$. 
From \cite[Sec. VII.4.1]{Violet}, we therefore obtain an estimate of $C_1(z,u)$ as $z$ approaches $r(u)$.
Namely, for every $u>0$ the series $C_1(z,u)$ has a square-root singularity at $r(u)$ and in some $\Delta$-domain, we have 
\begin{equation}
\label{eq:squarerootsing}
C_1(z,u) \stackrel{z\to r(u)}{=} s(u) -\gamma_1(u)\sqrt{1-z/r(u)} + \mathcal{O}(1-z/r(u) )
\end{equation}
with $\gamma_1(u)=\sqrt{\frac{2\,r(u)\, G_z(r(u),s(u),u)} {G_{cc}(r(u),s(u),u)}}$. 
Note that $G_{cc}(r(u),s(u),u)>0$ and $G_z(r(u),s(u),u)>0$ from~\cref{eq:hyp_check_derivatives_cc,eq:hyp_check_derivatives_u}. %
The determination of the sign in front of $\sqrt{1-z/r(u)}$ uses that 
$C_1$ is increasing in $z$ when $z$ approaches $r(u)$ from the left. 

To obtain asymptotics for the coefficients of $C_1(z,u)$,
we have to extend \eqref{eq:squarerootsing} for complex $u$ around $u>0$.
We argue that the solutions $(r,s) = (r(u),s(u))$ of the characteristic system~\eqref{eq:CharacteristicSystemGenerale} have analytic continuations in a neighborhood
of every $u>0$.
Observe that $G$ is analytic
 and that the Jacobian matrix of the system is the following determinant (where all derivatives are evaluated
 at $(r(u),s(u),u)$)
\[
\left| \begin{array}{cc}  G_z & G_c-1 \\ G_{cz} & G_{cc} \end{array} \right| = 
\left| \begin{array}{cc}  G_z &  0 \\ G_{cz} & G_{cc} \end{array} \right| = G_z G_{cc}.
\]
It is nonzero for $u>0$ from \cref{eq:hyp_check_derivatives_cc,eq:hyp_check_derivatives_u}. %
Consequently, there exist analytic functions $r(u),s(u)$ defined on a neighborhood
of the positive real axis, such that, for each $u$, 
the pair $(r(u),s(u))$ is a solution of the characteristic system for such values of $u$. %

By continuity we can also ensure that, for $u$ sufficiently close to the real axis,
\begin{itemize}
\item $r(u)$ is the unique singularity of $C_1(z,u)$ of smallest modulus and $C_1(r(u),u)=s(u)$;
\item $G_z$ and $G_{cc}$ are non-zero at $(r(u),s(u),u)$.
\end{itemize}
We denote by $U$ the open set of complex numbers $u$ where these properties hold.
Therefore, as stated in \cite[Remark 2.20]{DrmotaRandomTrees}, 
it follows that the singular representation (\ref{eq:squarerootsing}) 
also holds for complex $u\in U$ (and for $z$ in a proper $\Delta$-domain depending on $u$).

Combining relation \eqref{eq:C_C1} with the above development (\ref{eq:squarerootsing})
of $C_1(z,u)$ near $z=r(u)$, we obtain for $u\in U$
$$
C(z,u) \stackrel{z\to r(u)}{=} e^{r(u)(1+u)+s(u)}-1 -\gamma(u) \sqrt{1-z/r(u)} + \mathcal{O}(1-z/r(u) ),
$$
where $\gamma(u)$ is defined by 
$$
\gamma(u)=\gamma_1(u)\exp\left(s(u)+r(u)(1+u)\right).
$$
Moreover since $C(z,u)$ is aperiodic, $r(u)$ is the unique dominant singularity of $C$ and  
\begin{equation}\label{eqznCzu}
[z^n]\, C(z,u) \stackrel{n\to +\infty}{=} \frac{\gamma(u)}{2\sqrt{\pi} }n^{-3/2} (r(u))^{-n} \left(1+\mathcal{O}(1/n)\right)
\end{equation}
uniformly for $u$ in a compact subset contained in $U$
(by Transfer Theorem \cite[Thm.VI.3]{Violet} and compactness).

Now we can proceed as in \cite[Thm.3]{DrmotaMultivariate},
with the nonnegativity of the coefficients of $G$ replaced by the above variant of the smooth-implicit function schema,
and obtain by an application of a saddle point integration
\begin{equation}\label{eq:equivalentC}
[z^n u^k] C(z,u) \sim \frac{R_{k/n}}{n^2} \left( r(u(k/n)) u(k/n)^{k/n} \right)^{-n},
\end{equation}
uniformly for $an \le k \le bn$ with $0<a<b<1$, 
where $R_\beta$ $(0< \beta < 1$) is some positive (computable) quantity and  $u = u(\beta)$ is determined by the following 
equation (which is the rewriting of \cite[(2.14)]{DrmotaMultivariate} with our notation): 
\begin{equation}\label{equ_beta}
\beta=-\frac{u\,r'(u)}{r(u)} = \frac{ u\,G_u(r(u),s(u),u) }{r(u)G_z(r(u),s(u),u)}.
\end{equation}
We explain in Remark~\ref{rmk:invertible} below why \cref{equ_beta} is indeed invertible. 

%\medskip

Finally with \cref{eq:Asympt_ell} we obtain
$$
\mathbb{E}[\bm X_{n,k}] = \frac{ [z^n u^{k}]\, C(z,u) }{ [z^n] C(z,0) }
\underset{\text{for } n\geq 2}{=} \frac{ [z^n u^{k}]\, C(z,u) }{ [z^n] 2L(z) }
 \sim \frac{B_{k/n}}{\sqrt{n}} (C_{k/n})^n,
$$
uniformly for $an \le k \le bn$ for some $B_\beta >0$ and with 
\begin{equation}
\label{eq:CBeta}
C_\beta:= \frac{2\log(2)-1}{r(u(\beta)) u(\beta)^{\beta}}.
\end{equation}
concluding the proof of Eq.~\eqref{eq:AsymptotiqueEX_n,k}. 

\begin{remark}\label{rmk:invertible}
Let us justify that \cref{equ_beta} can be inverted to express $u$ as a function of $\beta$. 

First, observe that \cref{eq:Def_F} defines $u$ as a function of $y$. 
This function is decreasing for $y \in (0,\log(2))$ and maps bijectively $(0,\log(2))$ to $(0,\infty)$. 
Therefore \cref{eq:Def_F} can be inverted to express $y$ as a function of $u$, which is decreasing and maps bijectively $(0,\infty)$ to~$(0,\log(2))$.

Second, from the second expression of $\beta$ in  \cref{equ_beta}, 
we obtain an expression of $\beta$ as a function of $y$, substituting \cref{eq:r_u_beta,eq:Def_F,eq:GuExpr,GZ} into \cref{equ_beta}.
This gives 
\begin{equation}\label{eq_beta_explicite}
\beta = \frac{ \left( e^{y}-2 - \log(e^{y}-1) \right) (e^{y}-1) }{ 2y + 1 - e^{y} + (e^{y}-1) (2y - 1- \log(e^{y}-1) ) }
\end{equation}
This expression defines $\beta$ as a function of $y$. 
This function is decreasing for $y \in (0,\log(2))$ and maps bijectively $(0,\log(2))$ to $(0,1)$. 

The function $\beta=\beta(u)$, obtained by composition of the above two, is therefore a bijection from $(0,\infty)$ to $(0,1)$, 
allowing to define $u=u(\beta)$. 
We observe, in addition, that $0 < u(\beta) <1$ for $\beta >0$. 
\end{remark}

\subsection{Proof of \cref{th:AsymptX_nalpha}: Estimates (i) and (ii).}\label{ssec:uinvertible}
We now analyze the expression of $C_\beta$.
Combining \cref{eq:CBeta} with \cref{eq:Def_F,eq:r_u_beta,eq_beta_explicite}, we can express $C_\beta$ as an explicit function of $y$; further inverting numerically \cref{eq_beta_explicite}
gives $C_\beta$ as a function of $\beta$.
The graph of the function $\beta \mapsto  C_\beta$ on \cref{fig:C_beta} 
was obtained in this way.

From these expressions, we can also perform Taylor expansions (see the jupyter notebook mentioned below).
The expansion of \cref{eq_beta_explicite} around $y=\log(2)$ yields
\begin{equation}\label{eq:beta}
    \beta = \frac{(y - \log(2))^2}{2\log(2)-1} +\mathcal{O}((y - \log(2))^3).
\end{equation}
Plugging this estimate in the Taylor expansion of \cref{eq:Def_F,eq:r_u_beta} around $y=\log(2)$, we obtain
\begin{align*}
r(u(\beta)) &= 2\log(2)-1 - (y - \log(2))^2 +\mathcal{O}((y - \log(2))^3) \\
&  = 2\log(2)-1+(1-2\log(2))\beta +\mathcal{O}(\beta^{3/2});\\
u(\beta)&=  \frac{2}{2\log(2)-1}(y - \log(2))^2 + \mathcal{O}((y - \log(2))^3) \\
&  = 2\beta +\mathcal{O}(\beta^{3/2}).
\end{align*}

From \cref{eq:CBeta} we deduce \cref{th:AsymptX_nalpha} item (ii):
$$
C_\beta=\frac{2\log(2)-1}{r(u(\beta)) u(\beta)^{\beta}}
= 1+\beta|\log(\beta)| +\mathrm{o}(\beta\log(\beta))
$$
when $\beta \to 0$. 

In particular, this proves $C_\beta >1$ for $\beta \in (0,\beta_0)$ for some $\beta_0 >0$.
Numerical computations give the estimate $\beta_0 \approx 0.522677\dots$; 
we furthermore observe numerically that $C_\beta$ reaches its maximum at $\beta^\star \approx 0.229285\dots$ where $C_{\beta^\star} \approx 1.3663055\dots$.

Details on the computations above are provided in a jupyter notebook embedded into this pdf
(alternatively you can download the source of the arXiv version to get the files). 
We provide both an html read-only version and an editable ipynb version for the reader's convenience.

\section{Expected number of increasing subsequences of linear size}\label{Sec:ConstantsSeparablePermutations}

We now discuss the proof of \cref{thm:expectation_permutations}, the analog of \cref{th:AsymptX_nalpha} for separable permutations. 
We start with some definitions. 

Given two permutations, $\pi$ of size $k$ and $\tau$ of size $\ell$,
the \emph{direct sum} (resp. \emph{skew sum}) of $\pi$ and $\tau$, denoted $\oplus[\pi,\tau]$ (resp. $\ominus[\pi,\tau]$) is 
the permutation $\sigma$ of size $k+\ell$ such that 
\begin{itemize}
 \item for $1 \leq i \leq k$, $\sigma(i) = \pi(i)$ (resp. $\sigma(i) = \ell + \pi(i)$), and 
 \item for $1 \leq i \leq \ell$, $\sigma(k+i) = k + \tau(i)$ (resp.  $\sigma(k+i) = \tau(i)$).
\end{itemize}
Direct sums and skew sums readily extend to more than two permutations, writing \linebreak %ok
$\oplus[\pi, \dots, \tau, \rho] = \oplus[\pi, \dots \oplus[\tau,\rho]]$ 
(and similarly for $\ominus$). 

As mentioned in \cref{ssec:intro_separables}, separable permutations 
are those which can be obtained from permutations of size $1$ performing direct sums and skew sums. 
This is similar to the characterization of cographs as the graphs obtained using the join and disjoint union constructions, 
from graphs with one vertex. 
And similarly to the description of cographs through their cotrees, 
this allows to associate a tree with each separable permutation.
(This is actually a special case of the construction which associate with each permutation, 
not necessarily separable, its substitution decomposition tree -- see \emph{e.g.} \cite[Section 1.1]{Nous3}). 

There are actually several presentations of this correspondence between separable permutations and trees. 
The one which is suitable here is presented in \cite[Section 2.2]{Nous1}, 
and we borrow our terminology from there.

\begin{definition}
A \emph{signed Schr\"oder tree where the signs alternate} of size $n$ 
is a rooted tree $t$ with~$n$ leaves such that: 
\begin{itemize}
 \item $t$ is plane (\emph{i.e.} the children of every internal node are ordered);
 \item every internal node has at least two children;
 \item every internal node in $t$ is decorated with $\oplus$ or $\ominus$; 
 \item decorations $\oplus$ and $\ominus$ should alternate along each branch from the root to a leaf.
\end{itemize}
\end{definition}
An important difference with cotrees is that the above trees are plane, 
while cotrees are not plane. 

We can associate to a signed Schr\"oder tree where the signs alternate 
a permutation $\mathrm{perm}(t)$ of the same size, as follows. 
\begin{itemize}
\item If $t$ consists of a single leaf, then $\mathrm{perm}(t)$ is the permutation of size $1$.
\item Otherwise, the root of $t$ has decoration $\oplus$ or $\ominus$ and has subtrees $t_1$, \dots, $t_d$
attached to it ($d \ge 2$), in this order from left to right.
Then, if the root has decoration $\oplus$, we let $\mathrm{perm}(t)$ be $\oplus[\mathrm{perm}(t_1) \dots, \mathrm{perm}(t_d)]$. 
Otherwise, the root has decoration $\ominus$, and
 we let $\mathrm{perm}(t)$ be $\ominus[\mathrm{perm}(t_1), \dots, \mathrm{perm}(t_d)]$.
\end{itemize}

\begin{proposition}\label{prop:separation_trees}
The correspondence presented above between separable permutations and signed Schr\"oder trees where the signs alternate is one-to-one. 
\end{proposition}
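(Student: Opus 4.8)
The plan is to make the inverse of the map $t \mapsto \mathrm{perm}(t)$ explicit, using the classical substitution decomposition of permutations (for which I would refer to \cite[Section 1.1]{Nous3} and \cite{PermClasses}), and then to check by a double induction on the size that the two maps are mutually inverse. So I would build a map $\Psi$ from separable permutations to signed Schr\"oder trees where the signs alternate, show that $\Psi(\sigma)$ is well-formed, and verify $\mathrm{perm}(\Psi(\sigma))=\sigma$ and $\Psi(\mathrm{perm}(t))=t$.

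First I would collect the facts about substitution decomposition specialised to the separable case. A permutation of size $1$ is the unique permutation of its size. A permutation $\sigma$ of size at least $2$ is \emph{$\oplus$-decomposable} (resp.\ \emph{$\ominus$-decomposable}) if $\sigma = \oplus[\sigma_1,\sigma_2]$ (resp.\ $\ominus[\sigma_1,\sigma_2]$) for nonempty $\sigma_1,\sigma_2$; these two properties are mutually exclusive, and a permutation of size at least $2$ having neither is an inflation of a simple permutation of size at least $4$. Since every simple permutation of size at least $4$ contains $2413$ or $3142$ as a pattern, and picking one point in each block exhibits the underlying simple permutation as a pattern, no separable permutation can be of this last type; hence \emph{every separable permutation of size at least $2$ is $\oplus$-decomposable or $\ominus$-decomposable, and not both}. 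I would also use that if $\sigma$ is $\oplus$-decomposable there are a unique $d \ge 2$ and unique $\oplus$-indecomposable permutations $\sigma_1, \dots, \sigma_d$ with $\sigma = \oplus[\sigma_1, \dots, \sigma_d]$ (the finest $\oplus$-decomposition), symmetrically for $\ominus$; and that, the class of separable permutations being closed under taking patterns, each $\sigma_i$ is again separable.

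Next I would define $\Psi$ recursively: $\Psi(\sigma)$ is the single leaf if $|\sigma| = 1$; if $|\sigma| \ge 2$ and $\sigma$ is $\oplus$-decomposable with finest $\oplus$-decomposition $\sigma = \oplus[\sigma_1, \dots, \sigma_d]$, then $\Psi(\sigma)$ has root decorated $\oplus$ with ordered subtrees $\Psi(\sigma_1), \dots, \Psi(\sigma_d)$, and symmetrically with $\ominus$ otherwise. An induction on $|\sigma|$ shows $\Psi(\sigma)$ is a signed Schr\"oder tree in which the signs alternate: each $\Psi(\sigma_i)$ is either a leaf, or (as $\sigma_i$ is separable and $\oplus$-indecomposable, hence $\ominus$-decomposable) has root decorated $\ominus$, opposite to the $\oplus$ at the root of $\Psi(\sigma)$; the $\ominus$ case is symmetric. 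Then $\mathrm{perm}(\Psi(\sigma)) = \sigma$ follows immediately by induction, since in the $\oplus$ case $\mathrm{perm}(\Psi(\sigma)) = \oplus[\mathrm{perm}(\Psi(\sigma_1)), \dots, \mathrm{perm}(\Psi(\sigma_d))] = \oplus[\sigma_1, \dots, \sigma_d] = \sigma$. Conversely, if $t$ has root decorated $\oplus$ with ordered subtrees $t_1, \dots, t_d$, then every $\mathrm{perm}(t_i)$ is $\oplus$-indecomposable --- it has size $1$, or $t_i$ has root decorated $\ominus$ by the alternation condition, so $\mathrm{perm}(t_i)$ is $\ominus$-decomposable and hence not $\oplus$-decomposable --- so $\oplus[\mathrm{perm}(t_1), \dots, \mathrm{perm}(t_d)]$ is exactly the finest $\oplus$-decomposition of $\mathrm{perm}(t)$; therefore $\Psi(\mathrm{perm}(t))$ has root $\oplus$ with subtrees $\Psi(\mathrm{perm}(t_i)) = t_i$ by induction, i.e.\ $\Psi(\mathrm{perm}(t)) = t$. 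The $\ominus$ and leaf cases are symmetric or trivial.

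The main obstacle is not the bookkeeping above but invoking the right pieces of substitution-decomposition theory and stating them correctly: that a separable permutation of size at least $2$ always decomposes as a sum or a skew sum (no simple block of size $\ge 4$ can appear), that it cannot be both $\oplus$- and $\ominus$-decomposable, that the finest decomposition of a given type is unique, and that separability is inherited by blocks. I would cite these from \cite{Nous1,Nous3,PermClasses} rather than reprove them; granted them, everything else is a routine induction.
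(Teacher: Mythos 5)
Your proposal is correct; it is the standard substitution-decomposition argument (existence and uniqueness of the finest $\oplus$- or $\ominus$-decomposition of a separable permutation, exclusivity of the two cases, heredity of separability to blocks, then a double induction). The paper itself does not prove this proposition but simply cites \cite[Proposition~2.13]{Nous1}, and your argument is essentially the one found in that reference, so there is nothing to object to beyond noting that you could equally well have cited the result as the authors do.
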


For a proof of this statement, 
we refer to \cite[Proposition 2.13]{Nous1} -- see also the references given in \cite{Nous1}.

%\medskip

We can now move to the proof of \cref{thm:expectation_permutations}. 
The strategy is the same as in the proof of \cref{th:AsymptX_nalpha}, 
using the encoding of separable permutations by their signed Schr\"oder trees where the signs alternate, 
instead of the encoding of cographs by their cotrees. 
We therefore only sketch the computations here. 
Details are provided in the attached jupyter notebook.

We denote by $S := S(z)$ the solution of 
\begin{equation}
S=z+S^2/(1-S). \label{eqS}
\end{equation} 
Equivalently, $S$ is the series of Schr\"oder trees (\emph{i.e.}, plane trees where internal nodes have at least two children) counted by leaves. 
Unlike the series $L$ in the case of cographs, the series $S$ is explicit here, namely it holds that $S(z) = ( 1+z-\sqrt{1-6z+z^2})/4$. 
Its radius of convergence is $\rho = 3-2\sqrt{2}$ and we have $S(\rho) = (2-\sqrt{2})/2$.
 
The proof also involves the generating function $S_{\ominus} := S_{\ominus}(z,u)$ (resp. $S_{\oplus}:= S_{\oplus}(z,u)$) counting separable permutations 
which can be decomposed as a skew sum (resp. direct sum) marked with an increasing subsequence.
Without marking, from \cref{prop:separation_trees}, we get 
\[S_{\ominus}(z,0)=S_{\oplus}(z,0)=\frac{S^2}{1-S}=S-z.\]
The analogs of \cref{eq:C_0,eq:C_1}  are then
\begin{align}
S_{\ominus} &= \frac{S^2}{1-S}+\left(\frac{(S_{\ominus} +z+zu)^2}{1-(S_{\ominus} +z+zu)}+zu+z -S\right)\times \left( \frac{1}{(1-S)^2}-1\right), \label{eq:Sominus}\\
S_{\oplus} &= \frac{(S_{\ominus} +z+zu)^2}{1-(S_{\ominus} +z+zu)}. \label{eq:Soplus}
\end{align}
Indeed, an increasing subsequence in a direct sum of permutations $\pi_1\oplus \dots \oplus \pi_r$
is a union of  increasing subsequences in  $\pi_1$, \dots, and $\pi_r$.
Hence elements counted by $S_{\oplus}$ can be described as sequences of at least two elements chosen between $(\bullet,\varnothing)$, $(\bullet,\{\bullet\})$ and the elements counted by~$S_{\ominus}$.
This leads to Eq.~\eqref{eq:Soplus}.
On the other hand,
 a nonempty increasing subsequence in a skew sum of permutations $\pi_1\ominus \dots \ominus \pi_r$
is an increasing subsequence in either $\pi_1$, \dots, or $\pi_r$
Therefore, elements of $S_{\ominus}$ marked with a nonempty increasing subsequence
correspond to sequences of at least two elements, 
with exactly one element counted by $zu+S_{\oplus}-S_{\oplus}(z,0)$ (either $(\bullet,\{\bullet\})$ or a $\oplus$-decomposable permutation with a nonempty marked increasing subsequence)
and other elements counted by $S$. 
We need to add a term $S_{\ominus}(z,0)=\frac{S^2}{1-S}$ for the case of an empty marked increasing subsequence.
Substituting Eq.~\eqref{eq:Soplus} and using $S_{\oplus}(z,0) = S(z)-z$ gives Eq.~\eqref{eq:Sominus}.
%
%
%
%
%
%
%\medskip

%
%
%
%
%
%
%
%
%
%
%
Fix $u \in \mathbb{C}$. In order to perform saddle-point analysis with $S_{\ominus}$,
we rewrite the first equation of the previous system as $S_{\ominus}= G(z,S_{\ominus},u)$ where
\begin{equation}
G(z,c,u) = \frac{S^2(z)}{1-S(z)}+\left(\frac{(c +z+zu)^2}{1-(c +z+zu)}+zu+z -S(z)\right)\times \left( \frac{1}{(1-S(z))^2}-1\right)
\label{eq:Gperm} 
\end{equation}

Again this almost fits the settings of the smooth implicit-function schema,  only the nonnegativity of the coefficients of $G$ is not verified here.  And, as we shall see, sufficient conditions for the validity of \cite[Thm. VII.3 p.468]{Violet} similar to the cograph case  are satisfied.

The bivariate series $(z,c)\mapsto G(z,c,u)$ is analytic on
$\{(z,c): |z| <\rho,\ |c+z+zu|<1\}$.

 Moreover the characteristic system, 
 \begin{equation}
\label{eq:CharacteristicSystemGeneraleBis}
G(r,s,u)=s,\qquad G_c(r,s,u)=1,
\end{equation}
 can be worked out and its solutions satisfy either
 $$(r_1,s_1)=\left(\frac{1}{1+u} (2S(r_1)- 2 \sqrt{2S(r_1)-S(r_1)^2}+1) ,-2S(r_1)+\sqrt{2S(r_1)-S(r_1)^2}\right)$$
 or
  $$(r_2,s_2)=\left(\frac{1}{1+u} (2S(r_2)+2\sqrt{2S(r_2)-S(r_2)^2}+1) ,-2S(r_2)-\sqrt{2S(r_2)-S(r_2)^2}\right).$$
 Since $s_2<0$ when $r_2>0$, we focus on solutions of the first kind.
 We claim that, for any $u>0$, there is a unique $r_1$ in $(0,\rho)$
 satisfying $$r_1=\frac{1}{1+u} (2S(r_1)- 2 \sqrt{2S(r_1)-S(r_1)^2}+1).$$
 Indeed, when $r_1$ goes from $0$ to $\rho=3-2\sqrt{2}$, the quantity $S(r_1)$
 increases from $0$ to $S(\rho) = (2-\sqrt{2})/2$ and 
 the right-hand side decreases from $1/(1+u)$ to $(3-2\sqrt{2})/(1+u)<3-2\sqrt{2}$.

 This proves that for $u>0$, the characteristic system \eqref{eq:CharacteristicSystemGeneraleBis} has a unique 
 positive solution $(r(u),s(u))$.
Moreover, with $y:=S\left(r(u) \right)$ we have  
\begin{align}\label{eq:Simpli_r_u_separables1}
r(u)&=  \frac{1}{u + 1} \left(2 y - 2 \sqrt{2y - y^2} + 1\right),\\
s(u)&= -2y+\sqrt{2y - y^2}\,,\label{eq:Simpli_r_u_separables2}\\
u &= \frac{2\left(1 - y\right) \sqrt{2y - y^2} - 1}{2 y^{2} - y}\,, \label{eq:Simpli_r_u_separables3}
\end{align}
the equation for $u$ being a consequence of the one for $r(u)$ and \eqref{eqS}
which gives $r(u) = y - \frac{y^2}{1-y}$.

One can show as in the cograph case, but comparing $S_{\ominus}(\rho(u),u)$ with $1-\rho(u)(1+u)$ instead of $+\infty$, 
that $S_{\ominus}(z,u)$ has radius of convergence $r(u)$ and that its value at this singularity is given by $S_{\ominus}(r(u),u)=s(u)$. 
Again in an analogous way to the cograph case one can verify that 
the solutions $(r,s) = (r(u),s(u))$ of the characteristic system \eqref{eq:CharacteristicSystemGeneraleBis} have analytic continuations in a neighborhood of every $u>0$, 
noting that
$$G_z(r(u),s(u),u)=\frac{6y^2\sqrt{y(2-y)}-4y^2-10y\sqrt{y(2-y)}+9y+2\sqrt{y(2-y)}-2}{y(y-1)(y-2)(2y-1)(2y^2-4y+1)}$$ is positive on the interval $(0, S(\rho))$.

Therefore since $S_{\ominus}(z,u)$ is aperiodic we can  apply \cite[Thm.3]{DrmotaMultivariate} to prove Eq.~\eqref{eq:AsymptotiqueEZ_n,k} of \cref{thm:expectation_permutations} and we obtain 
$$
E_\beta=\frac{1}{(3+2\sqrt{2})r(u(\beta)) u(\beta)^{\beta}},
$$
where $u(\beta)$ is the inverse function of \cref{equ_beta} (or rather, its permutation counterpart, with the $G$, $r$ and $s$ defined in the current section).
A complicated expression of $\beta$ in terms of~$y$ is given in the attached notebook.
This can be numerically inverted to get $y$ in terms of $\beta$ and thus to compute $E_\beta$
through 
\cref{eq:Simpli_r_u_separables1,eq:Simpli_r_u_separables2,eq:Simpli_r_u_separables3}.
One can also perform Taylor expansions around $\beta=0$. It is shown in the notebook that
\[ y=1-\tfrac{\sqrt 2}2 +\sqrt \beta \sqrt{\tfrac{3}{4} \sqrt 2 -1} +o(\sqrt \beta). \]
From there, a routine computation gives
\[r(u(\beta))=3-2\sqrt{2} +O(\beta),\qquad u(\beta)=\lambda \beta +O(\beta^2),\]
for some explicit constant $\lambda$. We finally find
\[
E_\beta
= 1+\beta|\log(\beta)| +\mathrm{o}(\beta\log(\beta)),\]
as claimed in \cref{thm:expectation_permutations}.
This implies that there exists $\beta_1>0$ (numerically estimated \linebreak at $\beta_1\approx 0.5827$) %ok
such that for every $\beta <\beta_1$ we have $E_\beta >1$.
\cref{thm:expectation_permutations} is proved.

% ACKNOWLEDGEMENTS
% Include acknowledgements to colleagues and referee here.
% Funding and grant support should appear in footnotes on the front page, using the 
% thanks command in the authors command (see above).

\section*{Acknowledgements}

The authors are grateful to Marc Noy for stimulating discussions,
in particular for bringing to their attention the problem
of the maximum size of an independent set in a random cograph and the related literature
around the probabilistic version of the Erd\H{o}s--Hajnal conjecture. 

Thanks are also due to an anonymous referee for pointing out the existence of~\cite{Wuerfl}, and to Marc Noy and Carlos Hoppen for clarifying the status of 
the result announced by A. W{\"u}rfl in~\cite[Chapter~9]{Wuerfl}.
%%%%%%%%%%%%%%%%%%%%%%%%%%%%%%%%%%%%%%%%%%%%%%%%%%%
%%%%%%%%%%%%%%%%%%%%%%%%%%%%%%%%%%%%%%%%%%%%%%%%%%%

% BIBLIOGRAPHY
% We encourage you to use the bibtex format for your references. 
% You can find examples in ct-sample.bib. 
% Please use the correct entrytype 
%     article: any article published in a periodical like a journal article or magazine article
%     book: a book
%     booklet: like a book but without a designated publisher
%     conference: a conference paper
%     inbook: a section or chapter in a book
%     incollection: an article in a collection
%     inproceedings: a conference paper (same as the conference entry type)
%     manual: a technical manual
%     masterthesis: a Masters thesis
%     misc: used if nothing else fits
%     phdthesis: a PhD thesis
%     proceedings: the whole conference proceedings
%     techreport: a technical report, government report or white paper
%     unpublished: a work that has not yet been officially published
%
% We chose a bibliography style which uses the fieds: url, doi, and eprint (for arXiv ref)
% We encourage you to use MathSciNet https://mathscinet.ams.org/mathscinet
% or other equivalent bibliography databases to get full and correct bibliographic entries.

%\bibliographystyle{alphaurl}
%\bibliography{ct-sample}

% If you do not whish to use bibtex, you can copy paste your bibliographic
% entries as in the examples bellow (which is is just a copy of the bbl output
% of our bib file). Please be careful to the style / names and ordering of the
% entries.

\end{document}